\newcommand{\Z}{\mathbb{Z}}
\newcommand{\Q}{\mathbb{Q}}
\numberwithin{equation}{section}
\newtheorem{thmalpha}{Theorem}
\newtheorem{thm}{Theorem}[section]
\newtheorem{cor}[thm]{Corollary}
\newtheorem{lem}[thm]{Lemma}
\newtheorem{que}[thm]{Question}
\newtheorem{prop}[thm]{Proposition}
\newtheorem{conj}[thm]{Conjecture}
\newtheorem*{thm*}{Theorem}
\newtheorem*{nota*}{Notation}
\theoremstyle{definition}
\newtheorem{remark}[thm]{Remark}
\newtheorem{defi}[thm]{Definition}
\def\pitem{\advance\leftskip3mm\advance\linewidth-3mm}
\def\mitem{\advance\leftskip-3mm\advance\linewidth3mm}
\gdef\SetFigFont#1#2#3#4#5{
  \reset@font\fontsize{#1}{#2pt}
  \fontfamily{#3}\fontseries{#4}\fontshape{#5}
  \selectfont}
\renewcommand{\subjclass}[1]{\thanks{\emph{2020 Mathematics Subject Classification:}~#1}}
\renewcommand{\keywords}[1]{\thanks{\emph{Keywords and Phrases:}~#1}}
\renewcommand{\date}{\thanks{\today}}
\newcommand{\MM}{\mathcal{M}}
\newcommand{\OO}{\mathcal{O}}
\newcommand{\fp}{\mathfrak{p}}
\newcommand{\Cc}{\mathbb{C}}
\newcommand{\Qq}{\mathbb{Q}}
\newcommand{\Zz}{\mathbb{Z}}
\newcommand{\Ff}{\mathbb{F}}
\newcommand{\nc}{\newcommand}
\nc{\on}{\operatorname}
\nc \defeq {\vcentcolon=}
\nc{\eqdef}{=\vcentcolon}
\nc{\PI}{\mathcal{PI}}
\nc{\MI}{\mathcal{MI}}
\nc{\renc}{\renewcommand}
\renc{\P}{\mathbb{P}}
\renc{\figurename}{Table}
\def\house#1{\setbox1=\hbox{$\,#1\,$}%
\dimen1=\ht1 \advance\dimen1 by 2pt \dimen2=\dp1 \advance\dimen2
by 2pt
\setbox1=\hbox{\vrule height\dimen1 depth\dimen2\box1\vrule}%
\setbox1=\vbox{\hrule\box1}%
\advance\dimen1 by .4pt \ht1=\dimen1 \advance\dimen2 by .4pt
\dp1=\dimen2 \box1\relax}
\newcommand{\kdots}{,\ldots ,}
\newcommand{\GL}{{\rm GL}}
\newcommand{\medfrac}[2]{\mbox{\large{$\textstyle{\frac{#1}{#2}}$}}}
\newcommand{\medbinom}[2]{\mbox{\large{$\textstyle{\binom{#1}{#2}}$}}}
\newcommand{\medmatrix}[1]{\mbox{$\Big($\footnotesize{$\begin{matrix}#1\end{matrix}$}$\Big)$}}
\renewcommand{\gcd}{{\rm gcd}}
\newcommand{\uX}{\underline{X}}
\renewcommand\@biblabel[1]{#1.}
\author[M. Bhargava, J.-H. Evertse, K. Gy\H{o}ry, L. Remete, A. A. Swaminathan]{Manjul Bhargava, Jan-Hendrik Evertse, K\'alm\'an Gy\H{o}ry, \\ L\'aszl\'o Remete, and Ashvin A.~Swaminathan}
\address{Manjul Bhargava and Ashvin A.~Swaminathan\newline
Princeton University,
Department of Mathematics\newline
Fine Hall, Washington Road, Princeton NJ 08540, USA\newline
\textit{email addresses:}
{
\tt bhargava@math.princeton.edu} and {\tt ashvins@math.princeton.edu}}
\address{Jan-Hendrik Evertse\newline
Leiden University, Department of Mathematics\newline
P.O.Box 9512, 2300 RA Leiden, The Netherlands\newline
\textit{email address:} {\tt evertse@math.leidenuniv.nl}}
\address{K\'alm\'an Gy\H{o}ry and L\'aszl\'o Remete\newline
Institute of Mathematics, University of Debrecen\newline
P.O.Box 400, H-4002 Debrecen, Hungary\newline
\textit{email addresses:} {\tt gyory@science.unideb.hu} and {\tt remete.laszlo@science.unideb.hu}}
\title{\vspace*{-0.6in}Hermite equivalence of polynomials}
\subjclass{11C08}
\keywords{univariate polynomials, binary forms, discriminant, equivalence, monogeneity}
\thanks{The research of the first-named author was supported by a Simons Investigator Grant and NSF grant~DMS-1001828. The research of the third-named author was supported in part by Grants K115479 and K128088 from the Hungarian National Foundation for Scientific Research (OTKA) and from the Austrian-Hungarian joint project ANN130909 (FWF-NKFIH). 
The research of the fourth named author was supported in part by the project EFOP-3.6.1-16-2016-00022 co-financed by the European Union and the European Social Fund.
The research of the fifth-named author was supported by the NSF Graduate Research Fellowship.}
\dedicatory{To the memory of Professor Andrzej Schinzel (1937--2021)}
\begin{document}

\maketitle
\vspace*{-0.3in}

\section{Introduction}\label{section1}

\subsection{Summary}

In this paper, we resurrect a long-forgotten notion of equivalence for univariate polynomials with integral coefficients introduced by Hermite in the 1850s. We show that the Hermite equivalence class of a polynomial has a very natural interpretation in terms of the invariant ring and invariant ideal associated with the polynomial.
We apply this interpretation to shed light on the relationship between Hermite equivalence and more familiar notions of polynomial equivalence, such as $\on{GL}_2(\Z)$- and $\Z$-equivalence. Specifically, we prove that $\GL_2(\Zz )$-equivalent polynomials are Hermite equivalent and, for polynomials of degree $2$ or $3$,
the converse is also true. On the other hand, for every $n\geq 4$, we give infinite collections of examples of polynomials $f,g\in \Zz [X]$ of degree~$n$ that are Hermite equivalent but not $\GL_2(\Zz )$-equivalent.


Using his reduction theory for quadratic forms, Hermite proved (ineffectively) that polynomials in $\Zz [X]$ with given discriminant lie in finitely many Hermite equivalence classes (this was in fact the reason why Hermite introduced his notion of equivalence). In this paper, we also compare Hermite's finiteness theorem with the most important results of this area, due to Birch and Merriman~\cite{BM1972} (1972), Gy\H{o}ry~\cite{Gy1973,Gy0974}  (1973,1974) and Evertse and Gy\H{o}ry~\cite{EGy1991,EGy2017} (1991,2017), which imply in a precise and effective form that polynomials in $\Zz [X]$ of given discriminant lie in finitely many $\GL_2(\Zz )$-equivalence classes, and hence in finitely many Hermite equivalence classes.

We point out that these results of Birch and Merriman, Gy\H{o}ry, and Evertse and Gy\H{o}ry are much more precise than Hermite's theorem and require deeper tools to prove. In particular, we correct a faulty reference occurring in Narkiewicz' excellent book~\cite{N2019} (2019), where $\GL_2(\Zz )$-equivalence and Hermite equivalence of polynomials were mixed up.

\subsection{Background}

In the mid-nineteenth century, Hermite~\cite{H1854,H1857} introduced a new notion of equivalence---which we call \emph{Hermite equivalence}---for univariate polynomials with integral coefficients. His motivation was to prove a finiteness theorem for equivalence classes of polynomials having given degree and discriminant. Such finiteness theorems had already been proven for $\on{GL}_2(\Z)$-equivalence classes of quadratic polynomials by Lagrange~\cite{L1773}, whose work was later improved by Gauss~\cite{G1801}. Hermite~\cite{H1851} proved the same finiteness statement for $\on{GL}_2(\Z)$-equivalence classes of cubic polynomials. Although he was unable to extend this result to polynomials of degree greater than three, Hermite realized that, if one replaces ``$\on{GL}_2(\Z)$-equivalence'' with ``Hermite equivalence,'' the desired finiteness statement would follow from the reduction theory for quadratic forms that he had previously developed in~\cite{MR1578622}.

Using his reduction theory for quadratic forms, Hermite proved (ineffectively) that polynomials in $\Zz [X]$ with given discriminant lie in finitely many Hermite equivalence classes. Hermite's original objective---proving that there are finitely many $\on{GL}_2(\Z)$-classes of polynomials of given degree and discriminant---was finally achieved more than a century later by Birch and Merriman~\cite{BM1972},
and independently, for monic polynomials and in a more precise and effective form by Gy\H{o}ry~\cite{Gy1973}. The result of Birch and Merriman was subsequently made effective by Evertse and Gy\H{o}ry~\cite{EGy1991}. Surprisingly, Hermite's result on finiteness for Hermite equivalence classes was not mentioned in any of these works, or in the related papers of Delone~\cite{D1930}, Nagell~\cite{N1930}, Gy\H{o}ry~\cite{Gy0974,Gy1976,Gy1978a,Gy1978b,Gy1980,Gy1994,Gy1998}, and Evertse and Gy\H{o}ry~\cite{EGy1991}. In fact, Hermite equivalence of polynomials does not appear to have been studied in the literature in the nearly two centuries since Hermite first introduced~the~notion.

The purpose of this paper is twofold: (1) to provide a thorough treatment of the notion of Hermite equivalence, and (2) to compare Hermite equivalence with two more familiar notions of equivalence for univariate integral polynomials, namely, $\on{GL}_2(\Z)$-equivalence and $\Z$-equivalence. We present theoretical arguments as well as examples to shed light on the relationships among these three different types of polynomial equivalence.


\subsection{Notions of equivalence} \label{sec-basicnotions}

We now define the three notions of equivalence for polynomials in $\Z[X]$ studied in this paper, namely, Hermite equivalence (\S\ref{sec-eqhermite}), $\on{GL}_2(\Z)$-equivalence (\S\ref{sec-eqgl2z}), and $\Z$-equivalence (\S\ref{sec-eqz}).

\subsubsection{Hermite equivalence} \label{sec-eqhermite}
 To a polynomial \begin{equation} \label{eq-form}
f(X)=f_0X^n+f_1X^{n-1}+\cdots +f_n=f_0(X-\alpha_1)\cdots (X-\alpha_n)\in \Zz [X],
\end{equation}
where $\alpha_1\kdots\alpha_n\in\Cc$, Hermite associated the following decomposable form $[f]$ in $n$ variables:
\begin{equation} \label{hdef}
[f](\underline{X})=
f_0^{n-1}\prod_{i=1}^n\left(\alpha_i^{n-1}X_1+\alpha_i^{n-2}X_2+\cdots+X_{n}\right),
\end{equation}
where $\underline{X}$ denotes the column vector $(X_1\kdots X_n)^T$. As we shall show, the form $[f]$ has integer coefficients; it is primitive (i.e., its coefficients have greatest common divisor $1$) if and only if $f$ is primitive; and its discriminant is equal to that of the polynomial $f$.

Using the above construction of the form $[f]$, Hermite introduced the following notion of equivalence, which we call \emph{Hermite equivalence}, for polynomials $f,g\in\Z[X]$ of degree $n$:
\begin{defi}
Let $f,g\in\Z[X]$ be polynomials of degree $n$. Then $f$ and $g$ are said to be \textit{Hermite equivalent} if the decomposable forms $[f],[g]$ are $\GL_n(\Z)$-equivalent, i.e., if there is a matrix $U\in \GL_n(\Z)$ such that
$$[g](U\underline{X})=\pm[f](\underline{X}).$$
\end{defi}
\noindent Since the action of $\on{GL}_n(\Z)$ on homogeneous forms of degree $n$ in $n$ variables is discriminant-preserving, it follows that the discriminants of Hermite equivalent polynomials are equal.


\subsubsection{$\on{GL}_2(\Z)$-equivalence} \label{sec-eqgl2z}

As far as we know, Hermite did not compare his equivalence with the well-known notion of $\GL_2(\Z)$-equivalence. Recall that two binary $n$-ic forms (i.e., binary forms of degree $n$) $F,G\in\Zz [X,Y]$ are called $\GL_2(\Zz )$-\emph{equivalent} if
$G(X,Y)=\pm F(aX+bY,cX+dY)$ for some $\bigl(\begin{smallmatrix}
a&b\\c&d\end{smallmatrix}\bigr)\in\GL_2(\Z)$.
In this case $F$ and $G$ have the same discriminant.

To a binary $n$-ic form $F$, we may associate the univariate polynomial $f(X)=F(X,1)$. The discriminants of $F$ and of $f$ (viewed as a degree $n$ polynomial) coincide.
Conversely, if $f\in\Z[X]$ is a polynomial of degree at most $n$, we can associate to $f$ (viewed as a polynomial of degree $n$) its homogenization, namely, the binary $n$-ic form $F(X,Y)=Y^nf\bigl(\frac{X}{Y}\bigr)$. We then define two polynomials $f,g\in \Zz [X]$ of degree $n$ to be $\GL_2(\Zz )$-equivalent if their homogenizations are $\GL_2(\Zz )$-equivalent. This means precisely that
$g(X)=\pm (cX+d)^nf\bigl(\frac{aX+b}{cX+d}\bigr)$
for some $\bigl(\begin{smallmatrix}
a&b\\c&d\end{smallmatrix}\bigr)\in\GL_2(\Z)$.

We shall show in what follows that $\GL_2(\Z)$-equivalence implies Hermite equivalence, and further that Hermite equivalence is in general weaker than $\GL_2(\Z)$-equivalence. For the sake of convenience, we shall work mostly with $ \on{GL}_2(\Z)$-equivalence of univariate polynomials rather than of binary forms.

\subsubsection{$\Z$-equivalence} \label{sec-eqz}

Two monic polynomials $f,g\in\Z[X]$ of degree $n$ are said to be $\Z$\textit{-equivalent} if $g(X)=\varepsilon^nf(\varepsilon X+a)$ for some $\varepsilon\in\{ \pm 1\}$ and $a\in\Z$.
Clearly, $\Zz$-equivalent polynomials have the same discriminant, and $\Zz$-equivalence implies $\GL_2(\Zz )$-equivalence (and hence also Hermite equivalence, as we shall show). Note that $\Zz$-equivalence is in general much stronger than $\GL_2(\Z)$-equivalence.

\subsection{Main theorems}

Given a polynomial $f \in \Z[X]$ of degree $n \geq 2$, define the \emph{invariant order} of $f$ to be the ring $R_f$ of global sections of the subscheme of $\mathbb{P}^1_{\Z}$ cut out by the homogenization of $f$ (i.e., the unique binary $n$-ic form $F$ such that $F(x,1) = f(x)$).  Define the \emph{invariant ideal} of $f$ to be the $R_f$-module $I_f$ of global sections of the pullback of the line bundle $\OO(1)$ from $\mathbb{P}^1_{\Z}$ to $\on{Spec} R_f$.

Explicitly, if  $f(X)=f_0X^n+f_1X^{n-1}+\cdots+f_n \in \Z[X]$ is a polynomial of degree $n$ with leading coefficient $f_0 \neq 0$, and
$\alpha$ is the residue class of $X$ in $K_f \defeq \Q[X]/(f)$, then $R_f\subset K_f$ is isomorphic to the ring with $\Z$-basis
$$1,\;\;f_0\alpha,\;\;  f_0\alpha^2+f_1\alpha,\;\; \ldots\;,\;\; f_0\alpha^{n-1}+f_1\alpha^{n-2}+\cdots+f_{n-2}\alpha$$
and $I_f$ is isomorphic to the fractional $R_f$-ideal generated by $1$ and $\alpha$. (See Birch--Merriman~\cite{BM1972}, Nakagawa~\cite{Nakagawa}, and Wood~\cite{Wood}.)

\mbox{Then we have the following theoretical results:}

\begin{thm} \label{thm-main}
Let $n \geq 2$ be an integer. Then:
\begin{itemize}[leftmargin=30pt]
    \item[$(i)$]$(\text{\rm Corollary~\ref{cor-ringeq}})$ Two polynomials $f,g \in \Z[X]$ of nonzero discriminant and degree $n$ are Hermite equivalent if and only if their invariant orders $R_f$ and $R_g$ are isomorphic, and under such an isomorphism, the $(n-1)^{\text{st}}$ powers of their invariant ideals $I_f$ and $I_g$ belong to \mbox{the same ideal class.}
    \item[$(ii)$]$(\text{\rm Corollary~\ref{cor3.4}})$ Two monic polynomials $f,g \in \Z[X]$ of nonzero discriminant and degree $n$ are Hermite equivalent if and only if their invariant orders $R_f$ and $R_g$ are isomorphic.
    \item[$(iii)$]$(\text{\rm Corollary~\ref{Prop2.1}})$ If two polynomials $f,g \in \Z[X]$ of degree $n$ are $\GL_2(\Z)$-equivalent, then they are Hermite equivalent. In particular, if $f$ and $g$ are monic and $\Z$-equivalent, then they are Hermite equivalent.
\end{itemize}
\end{thm}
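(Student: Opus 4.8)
The plan is to realize Hermite's form $[f]$ as (a normalization of) the \emph{norm form} of the module $I_f^{\,n-1}$, and then to invoke the standard dictionary between $\GL_n(\Z)$-equivalence of norm forms and isomorphism-plus-ideal-class data for modules over orders. Writing $\xi = \alpha^{n-1}X_1 + \alpha^{n-2}X_2 + \cdots + X_n \in K_f$, the $i$-th linear factor of $[f]$ is exactly the $i$-th conjugate of $\xi$, so
\begin{equation*}
[f](\underline X) = f_0^{\,n-1}\prod_{i=1}^n \xi^{(i)} = f_0^{\,n-1}\,N_{K_f/\Q}(\xi),
\end{equation*}
and as $\underline X$ runs over $\Z^n$ the element $\xi$ runs over the full $\Z$-lattice $M_f := \Z + \Z\alpha + \cdots + \Z\alpha^{n-1}$. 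Thus $[f]$ is $f_0^{\,n-1}$ times the norm form of $M_f$ in the basis $(\alpha^{n-1},\dots,\alpha,1)$. The first key lemma I would prove is the pair of identifications $M_f = I_f^{\,n-1}$ and $\{x\in K_f : xM_f\subseteq M_f\} = R_f$. The former follows from $I_f = R_f + R_f\alpha$, so that $I_f^{\,n-1}=\sum_{k=0}^{n-1}R_f\alpha^k$, which one checks (using $f(\alpha)=0$ to reduce high powers, and a covolume/discriminant comparison to rule out a proper inclusion) coincides with $M_f$; the latter says $R_f$ is the ring of multipliers of $M_f$, which holds because $I_f$ is an invertible $R_f$-module when $\operatorname{disc} f\neq 0$.

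Granting this, part $(i)$ becomes the assertion that $f_0^{\,n-1}N_{M_f}$ and $g_0^{\,n-1}N_{M_g}$ are $\GL_n(\Z)$-equivalent up to sign if and only if $R_f\cong R_g$ and, under such an isomorphism, $[M_f]=[M_g]$ in the ideal class group. For the forward direction I would use the \textbf{rigidity of decomposable forms of nonzero discriminant}: the $n$ linear factors of such a form are determined up to permutation and scaling, so an equation $[g](U\underline X) = \pm[f](\underline X)$ forces a bijection of linear factors together with scalars, which is exactly the data of a $\Q$-algebra isomorphism $\phi\colon K_f\to K_g$ and an element $\lambda\in K_g^\times$ with $M_g = \lambda\,\phi(M_f)$. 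Reading off the ring of multipliers (an invariant of the form) gives $R_f\cong R_g$, and the scaling $\lambda$ gives $[I_f^{\,n-1}]=[M_f]=[M_g]=[I_g^{\,n-1}]$. The reverse direction is the easier transport-of-structure: from $\phi$ and $\lambda$ realizing the class equality one gets $N_{M_g}=N(\lambda)\,N_{M_f}\circ(\text{base change})$, and the normalizations $f_0^{\,n-1},g_0^{\,n-1}$ together with primitivity (which forces $N(\lambda)=\pm1$ after normalization) yield the $\GL_n(\Z)$-equivalence. I expect the \emph{main obstacle} to be exactly this dictionary: verifying that the multiplier ring is genuinely recoverable from the form, and that the sign and leading-coefficient normalizations are compatible so that one captures the ideal \emph{class} and not merely the algebra isomorphism. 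Part $(ii)$ then follows immediately: for monic $f$ one has $f_0=1$, so $\alpha$ is integral, $R_f=\Z[\alpha]$, and $I_f=R_f+R_f\alpha=R_f$ because $\alpha\in R_f$; hence $I_f^{\,n-1}$ is principal, the ideal-class condition in $(i)$ is vacuous, and Hermite equivalence reduces to $R_f\cong R_g$.

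For part $(iii)$ I would compute the effect of the $\GL_2(\Z)$-action directly. If $g(X)=\pm(cX+d)^nf\!\big(\tfrac{aX+b}{cX+d}\big)$ with $\left(\begin{smallmatrix}a&b\\c&d\end{smallmatrix}\right)\in\GL_2(\Z)$, then the roots of $g$ are the Möbius transforms $\beta_i = \tfrac{d\alpha_i-b}{-c\alpha_i+a}$ of the roots of $f$. Substituting into the $i$-th linear factor $\beta_i^{n-1}Y_1+\cdots+Y_n$ of $[g]$ and clearing the common factor $(-c\alpha_i+a)^{-(n-1)}$, the coefficient vector $(\alpha_i^{n-1},\dots,1)$ is transformed by the $(n-1)$st symmetric power of $\left(\begin{smallmatrix}a&b\\c&d\end{smallmatrix}\right)$; since this representation $\mathrm{Sym}^{n-1}$ carries $\GL_2(\Z)$ into $\GL_n(\Z)$ (integer entries, determinant $\pm1$), there is $U\in\GL_n(\Z)$ with $[g](U\underline X)=\pm[f](\underline X)$ once the leading-coefficient and sign factors collapse (as forced by equality of discriminants and primitivity). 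This is exactly Hermite equivalence. Finally, the ``in particular'' clause is immediate: a $\Z$-equivalence $g(X)=\varepsilon^nf(\varepsilon X+a)$ is the special $\GL_2(\Z)$-equivalence given by $\left(\begin{smallmatrix}\varepsilon&a\\0&1\end{smallmatrix}\right)$, as already noted in \S\ref{sec-eqz}, so monic $\Z$-equivalent polynomials are Hermite equivalent.
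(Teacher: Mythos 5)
Your overall route is the same as the paper's: realize $[f]$ as the norm form of the lattice $M_f=I_f(n-1)=I_f^{n-1}$ (the paper's Theorem~\ref{normformthm}), feed this into the dictionary between $\GL_n(\Z)$-equivalence of norm forms and algebra-isomorphism-plus-ideal-class data (Theorem~\ref{gennormformthm}), and prove part (iii) by letting $\GL_2(\Z)$ act on the coefficient vectors of the linear factors through the $(n-1)$st symmetric power, which is exactly the paper's matrix $t(\gamma)$ in the proof of Theorem~\ref{Prop2.1}. Parts (ii) and (iii) are essentially correct as you present them, with one quibble on (iii): the leading-coefficient and sign factors collapse automatically, because the translate has leading coefficient $\pm f_0\prod_i(a-c\alpha_i)$, whose $(n-1)$st power cancels the denominators $\prod_i(a-c\alpha_i)^{n-1}$ you cleared; this has nothing to do with ``equality of discriminants and primitivity.'' The paper sidesteps the issue entirely by writing $f=\prod_i(\alpha_{i,1}X-\alpha_{i,2})$ projectively.

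Part (i), however, has a genuine gap. Your key lemma asserts $\{x\in K_f: xM_f\subseteq M_f\}=R_f$ on the grounds that ``$I_f$ is an invertible $R_f$-module when $\on{disc}f\neq 0$.'' That is false: $I_f$ (equivalently $I_f(n-1)$) is invertible over $R_f$ if and only if $f$ is \emph{primitive} (Theorem~\ref{thm-everything}(iii)), and nonzero discriminant does not imply primitivity. For example, $f=2X^2+2X+2$ has nonzero discriminant, but $R_f=\Z+2\omega\Z$ (with $\omega$ a primitive cube root of unity), while $M_f=\Z[\omega]$, whose multiplier ring is $\Z[\omega]\supsetneq R_f$. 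In general the multiplier ring of $I_f(n-1)$ is $R_{f'}$, where $f'=f/c_f$ is the primitive part of $f$ (Theorem~\ref{thm-everything}(vi)). So your forward direction delivers only an isomorphism $R_{f'}\cong R_{g'}$, whereas statement (i) concerns arbitrary polynomials of nonzero discriminant and requires $R_f\cong R_g$. The paper closes exactly this hole in the proof of Theorem~\ref{thm-equivalence}: by Theorem~\ref{thm-prec} the content of $[f]$ is $c_f^{n-1}$, so Hermite equivalence forces $c_f=\pm c_g$; then $R_f=\Z+c_fR_{f'}$ and $R_g=\Z+c_gR_{g'}$ show that the isomorphism $R_{f'}\cong R_{g'}$ restricts to one between $R_f$ and $R_g$. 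Your argument is complete for primitive $f,g$ --- in particular part (ii) is unaffected, since monic polynomials are primitive --- but as written it does not prove (i) in the imprimitive case; you need to add this content-comparison step.
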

An important consequence of Theorem~\ref{thm-main}(iii) is that the effective finiteness theorems due to Evertse and Gy\H{o}ry~\cite{EGy1991,EGy2017} and Gy\H{o}ry~\cite{Gy1973,Gy0974} for $\on{GL}_2(\Z)$-equivalence or $\Z$-equivalence classes of polynomials of given discriminant (see Theorems~\ref{ThmC},~\ref{ThmCC},~\ref{ThmE}, and~\ref{ThmD} in \S\ref{sec-allthefiniteness} below) apply just as well to Hermite equivalence classes.  We also have effective bounds, due to Lagrange~\cite{L1773} (for $n = 2$), Levi--Delone--Faddeev~\cite{MR0160744} and Bennett~\cite{Ben2001} (for $n = 3$), Akhtari and Bhargava~\cite{quartics} (for $n = 4$), and Evertse and Gy\H{o}ry~\cite{EGy2017} and Evertse~\cite{E2011} (for general $n \geq 5$), for the number of ways in which a ring arises as the invariant order of a $\on{GL}_2(\Z)$- or $\Z$-equivalence class of polynomials. We thus obtain the following finiteness results. We use the notation $\log^* x \defeq \max (1,\log x)$ for $x>0$. 

\begin{thm} \label{thm-main2}
Let $n \geq 2$ be an integer. Then we have the following four points:
\begin{itemize}[leftmargin=30pt]
    \item[$(i)$]$(\text{\rm Theorems~\ref{ThmC} and~\ref{ThmD}})$ The number of Hermite equivalence classes of polynomials in $\Z [X]$ of given discriminant $D \neq 0$ is effectively bounded in a way that depends only on $D$. More specifically, every Hermite equivalence class of polynomials in $\Z[X]$ with degree $n$ and discriminant $D \neq 0$ \mbox{has a representative with coefficients not exceeding}
        $$\exp\left\lbrace\left(4^2n^3\right)^{25n^2}|D|^{5n-3}\right\rbrace$$
        in absolute value, and $n \leq 3 +  2 \log |D|/\log 3$.
    \item[$(ii)$]$(\text{\rm Theorems~\ref{ThmE} and~\ref{ThmD}})$  Every Hermite equivalence class of monic polynomials in $\Z[X]$ with degree $n$ and discriminant $D \neq 0$ \mbox{has a representative with coefficients not exceeding}
        $$\exp\left\lbrace n^{20}8^{n^2+19}\left(|D|(\log^*|D|)^n\right)^{n-1}\right\rbrace$$
        in absolute value, and $n \leq 2 + 2 \log |D|/\log 3$.
    \item[$(iii)$]$(\text{\rm Theorem~\ref{thm3.5}(i),(iii),(v),(vii)})$ The number of $\on{GL}_2(\Z)$-equivalence classes of separable polynomials in $\Z [X]$ of degree $n$ in a Hermite equivalence class is $1$ if $n = 2$ or $3$, at most $10$ if $n = 4$, and at most $2^{5n^2}$ if $n \geq 5$.
    \item[$(iv)$]$(\text{\rm Theorem~\ref{thm3.5}(ii),(iv),(vi),(viii)})$ The number of $\Z$-equivalence classes of monic separable polynomials in $\Z [X]$ of degree $n$ in a Hermite equivalence class is $1$ if $n = 2$, at most $10$ if $n = 3$, at~most~$2760$ if $n = 4$, and at most $2^{5n^2}$ if $n \geq 5$.
\end{itemize}
\end{thm}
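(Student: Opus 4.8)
The plan is to derive all four assertions from the structural equivalences of Theorem~\ref{thm-main} together with the externally cited finiteness and counting results; the theorem is a bridge, and the work consists in matching each statement about Hermite classes to the corresponding statement about $\GL_2(\Z)$- or $\Z$-classes.

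For parts $(i)$ and $(ii)$, the key observation is that Theorem~\ref{thm-main}(iii) makes every Hermite class a union of $\GL_2(\Z)$-classes, and in the monic case a union of $\Z$-classes. Hence, to exhibit a low-height representative of a given Hermite class, I would choose any $f$ in it, pass to the $\GL_2(\Z)$-class (resp.\ $\Z$-class) of $f$, and apply the effective height bound of Theorem~\ref{ThmC} (resp.\ Theorem~\ref{ThmE}) to that smaller class. The representative thereby produced is $\GL_2(\Z)$-equivalent (resp.\ $\Z$-equivalent) to $f$, hence Hermite equivalent to $f$ by Theorem~\ref{thm-main}(iii), so it lies in the original Hermite class and obeys the stated coefficient bound. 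Every member of the class has discriminant $D$ (recorded just after the definition of Hermite equivalence) and degree $n$, so the degree bounds $n\le 3+2\log|D|/\log 3$ and $n\le 2+2\log|D|/\log 3$ are read off directly from Theorem~\ref{ThmD}; no further argument is needed.

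For parts $(iii)$ and $(iv)$, the strategy is to convert membership in one Hermite class into invariant-order data and then quote the known counts. In the monic case $(iv)$, Theorem~\ref{thm-main}(ii) identifies the Hermite class of a monic separable degree-$n$ polynomial with the set of all monic such polynomials whose invariant order is isomorphic to a fixed order $R$; therefore the number of $\Z$-classes it contains equals the number of $\Z$-classes of monic polynomials with invariant order $R$, which is bounded by the results of Lagrange ($n=2$), Levi--Delone--Faddeev and Bennett ($n=3$), Akhtari--Bhargava ($n=4$), and Evertse--Gy\H{o}ry and Evertse ($n\ge 5$) assembled in Theorem~\ref{thm3.5}. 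In the general case $(iii)$, Theorem~\ref{thm-main}(i) shows that a Hermite class is cut out by the finer data of the isomorphism class of $R$ \emph{together with} the ideal class of $I_f^{\,n-1}$; since this refines the condition ``same invariant order,'' the number of $\GL_2(\Z)$-classes in a Hermite class is at most the number of $\GL_2(\Z)$-classes of polynomials with invariant order $R$, again controlled by Theorem~\ref{thm3.5}. The sharp values $1$ for $n=2,3$ encode the converse half of the main comparison, namely that in degrees $2$ and $3$ Hermite equivalence coincides with $\GL_2(\Z)$- (resp.\ $\Z$-) equivalence.

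The main obstacle is not the assembly but checking that the invariants are matched exactly: for $(iii)$ one must ensure the enumeration counts $\GL_2(\Z)$-classes sharing the precise Hermite invariant (isomorphism class of $R$ plus ideal class of $I_f^{\,n-1}$), and for $(iv)$ that the monic count is governed by $R$ alone. All of the analytic depth is imported: Theorems~\ref{ThmC} and~\ref{ThmE} carry the effective Diophantine input (Baker-type bounds for discriminant and unit equations) behind the height estimates, while the uniform bound $2^{5n^2}$ for $n\ge 5$ in Theorem~\ref{thm3.5} rests on the sharpest available counting of $\GL_2(\Z)$-orbits with prescribed invariant order. Granting these inputs, Theorem~\ref{thm-main2} follows by the bookkeeping described above.
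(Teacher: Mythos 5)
Your proposal is correct and follows essentially the same route as the paper: parts $(i)$ and $(ii)$ are obtained exactly as in the text by combining Theorem~\ref{Prop2.1} (that $\GL_2(\Z)$-equivalence implies Hermite equivalence) with the effective bounds of Theorems~\ref{ThmC},~\ref{ThmE}, and~\ref{ThmD}, and parts $(iii)$ and $(iv)$ are the content of Theorem~\ref{thm3.5}, whose proof in the paper likewise passes through Corollaries~\ref{cor3.2} and~\ref{cor3.4} (invariant orders of Hermite equivalent polynomials are isomorphic, and conversely in the monic case) before quoting the counting results of Bennett, Bhargava, Akhtari--Bhargava, Evertse, and Evertse--Gy\H{o}ry. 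Your observation that the Hermite invariant $(R_f,[I_f^{\,n-1}])$ refines the invariant-order condition, so that the order-based counts upper-bound the number of $\GL_2(\Z)$-classes per Hermite class, is precisely the role Corollary~\ref{cor3.2} plays in the paper's argument.
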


Finally, by constructing explicit examples, we prove the following existence theorems concerning the relationship between the aforementioned notions of polynomial equivalence:

\begin{thm} \label{thm-main3}
We have the following two points:
\begin{itemize}[leftmargin=30pt]
  \item[$(i)$]$(\text{\rm Theorem~\ref{thm-example}})$ There exist quartic polynomials $f,g \in \Z[X]$ of squarefree discriminant that have isomorphic invariant orders $R_f$ and $R_g$ but  $f$ and $g$ are not Hermite equivalent.
    \item[$(ii)$]$(\text{\rm \S\S\ref{sec-infmon4}--\ref{section4}})$ For each $n\geq 4$, there exist infinitely many Hermite equivalence classes of properly non-monic\footnote{I.e., not $\on{GL}_2(\Z)$-equivalent to a monic polynomial.} irreducible polynomials, of monic irreducible polynomials, and of monic reducible polynomials having degree $n$,  that split into more than one $\on{GL}_2(\Z)$-equivalence class.
\end{itemize}
\end{thm}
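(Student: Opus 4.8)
The common thread is the dictionary of Theorem~\ref{thm-main}: Hermite equivalence is governed by the pair $(R_f,[I_f^{\,n-1}])$, while $\GL_2(\Z)$-equivalence remembers the finer pair $(R_f,[I_f])$ and, for $n\ge4$, strictly more than even this. Both parts are obtained by forcing these two records to disagree. For part (i), note first that a squarefree discriminant makes $R_f,R_g$ the maximal order $\OO_K$ of the quartic field $K$ generated by a root, since $\mathrm{disc}(R_f)=[\OO_K:R_f]^2\,\mathrm{disc}(K)$; thus ``isomorphic invariant orders'' simply means $f,g$ generate isomorphic fields, and by Theorem~\ref{thm-main}(i) with $n=4$ such $f,g$ fail to be Hermite equivalent exactly when $[I_f^{\,3}]\neq[I_g^{\,3}]$ in $\mathrm{Cl}(\OO_K)$. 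I would therefore pick $K$ of squarefree discriminant whose class group has an element $c$ with $c^3\neq1$; the simplest choice is any $K$ of even class number with $c$ of order $2$, so that $c^3=c\neq1$. Letting $g$ realize the principal class (for instance a monic model, when $\OO_K$ is monogenic) gives $[I_g^{\,3}]=1$, and letting $f$ be a binary quartic with invariant order $\OO_K$ and invariant ideal of class $c$ gives $[I_f^{\,3}]=c\neq1$; here $f$ is necessarily non-monic, as $[I_f]\neq1$. Existence of $f$ follows from the Birch--Merriman/Nakagawa/Wood parametrization, and one then writes down an explicit pair; the actual work is to verify by computation that $\mathrm{disc}(f)=\mathrm{disc}(g)$ is squarefree and that $I_f$ is non-principal with $[I_f^{\,3}]\neq1$.

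For part (ii) the same dictionary shows that a Hermite class $(R,[I^{\,n-1}])$ splits into several $\GL_2(\Z)$-classes precisely when it contains admissible invariant ideals with $[I^{\,n-1}]=[J^{\,n-1}]$ but $I\not\sim J$. For the \emph{properly non-monic irreducible} family I would use an ideal-class mechanism: since $n-1\ge3$, choose a degree-$n$ field whose class group has an element $c$ with $c^{\,n-1}=1$ and $c^2\neq1$ (e.g.\ of order $n-1$), and realize irreducible $f,g$ with a common invariant order and $[I_f]=c^2$, $[I_g]=c$, both nontrivial and hence both properly non-monic. Then $[I_f^{\,n-1}]=[I_g^{\,n-1}]=1$, so $f,g$ are Hermite equivalent by Theorem~\ref{thm-main}(i), while $[I_f]\neq[I_g]$; since the invariant-ideal class is a $\GL_2(\Z)$-invariant (and any monic model would force a trivial class), the two are $\GL_2(\Z)$-inequivalent. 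Ranging the field over an infinite family carrying such class-group torsion yields infinitely many distinct Hermite classes.

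For the two \emph{monic} flavours the invariant ideal is trivial, so by Theorem~\ref{thm-main}(ii) any two monic polynomials with isomorphic orders are automatically Hermite equivalent, and the splitting must instead come from the extra rigidity of binary $n$-ic forms present only for $n\ge4$. I would build an order $R$---a field in the irreducible case, an order in a non-field \'etale $\Q$-algebra in the reducible case---that is monogenic through two generators $\theta_1,\theta_2$ whose monic minimal polynomials are related by no fractional-linear $\GL_2(\Z)$-substitution, and then certify $\GL_2(\Z)$-inequivalence by an honest $\GL_2(\Z)$-invariant of the associated forms. For $n=4$ the classical invariant $I$ has even degree and even weight, so it is unchanged both by $\det=\pm1$ and by the overall sign; the two forms share the discriminant $\Delta$ but can be arranged to have different $I$, and an even-weight invariant plays the same role for $n\ge5$. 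Varying a parameter keeps the discriminants pairwise distinct, giving infinitely many classes. This monic case is where I expect the main difficulty: the coarse data $(R,[I])$ is blind to the distinction, so $\GL_2(\Z)$-inequivalence cannot be read off from the order and ideal class and must be established through the finer invariant theory of binary forms---available only for $n\ge4$, in keeping with the forced coincidence of the two equivalences in degrees $2$ and $3$---all while controlling (ir)reducibility and keeping discriminants distinct across an infinite family.
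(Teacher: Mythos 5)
Your part (i) follows essentially the same route as the paper's Theorem~\ref{thm-example}: a maximal quartic order whose class group has $2$-torsion, one polynomial whose invariant ideal is principal and one whose is not, so that the cubes of the ideal classes disagree and Theorem~\ref{thm-main}(i) rules out Hermite equivalence. The one soft spot is your claim that existence of $f$ with $[I_f]=c$ ``follows from the parametrization'': binary forms only realize pairs $(\OO_K,[I])$ subject to the compatibility of Theorem~\ref{thm-everything}(v), namely $[I]^{n-2}$ must be the class of the inverse different of $\OO_K$. Your choices (a $2$-torsion class $c$ together with a monogenic, hence principally-different, $\OO_K$) happen to be compatible with this constraint, but you never check it, and even granting it one still has to produce the field and the form; the paper does this by an explicit construction via pairs of ternary quadratic forms plus a {\tt sage} verification.

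Part (ii) contains a genuine error, and it is exactly the constraint above that causes it. You propose $f,g$ with a common invariant order, $[I_f]=c^2$, $[I_g]=c$, $c^{n-1}=1$, $c^2\neq 1$. But by Theorem~\ref{thm-everything}(v), any two forms with the same invariant order satisfy $[I_f]^{n-2}=[I_g]^{n-2}$ (both equal the inverse-different class), so your hypotheses force $c^{n-2}=1$, which together with $c^{n-1}=1$ gives $c=1$: the configuration you want does not exist. More fundamentally, the paper's Corollary~\ref{cor-ringeqprim} shows that Hermite-equivalent \emph{primitive} polynomials automatically have the \emph{same} invariant ideal class (equal $(n-2)$-nd powers from the common order, equal $(n-1)$-st powers from Hermite equivalence, then divide). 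So your opening ``dictionary''---Hermite equivalence sees $(R_f,[I_f^{n-1}])$ while $\on{GL}_2(\Z)$-equivalence sees the finer $(R_f,[I_f])$---is incorrect: for primitive polynomials these two pairs carry identical information, and the splitting of a Hermite class into $\on{GL}_2(\Z)$-classes is completely invisible to orders and ideal classes, so no argument comparing ideal classes can ever certify the inequivalence required in part (ii). The paper works below this level: it produces pairs of generators related by a quadratic polynomial map (e.g.\ $\beta=\alpha+2\alpha^2$ in \S\ref{sec-infnonmon45} and Theorem~\ref{mind}, or $\beta=\alpha^2+r$ in Theorem~\ref{thm-fmandgm}) for which the lattices $\Z\langle 1,\alpha,\dots,\alpha^{n-1}\rangle$ and $\Z\langle 1,\beta,\dots,\beta^{n-1}\rangle$ literally coincide, giving Hermite equivalence by Theorem~\ref{thm-equivalence}; it then rules out $\on{GL}_2(\Z)$-equivalence by observing that a fractional-linear relation $\beta=\frac{a\alpha+b}{d\alpha+e}$ combined with the quadratic relation would yield a nonzero polynomial of degree $3<n$ vanishing at $\alpha$, with Galois-theoretic or congruence arguments controlling which conjugate of $\beta$ can occur. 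Your two monic families remain a declaration of intent rather than a proof: the proposed certification by classical invariants of binary forms is never carried out, and the hard content---exhibiting, in every degree $n\geq 4$, infinitely many orders admitting several inequivalent monic generators---is precisely what the paper's constructions supply (the Kappe--Warren quartic fields, the Catalan-number family $f^{(n)}_{t,c},g^{(n)}_{t,c}$ with its Newton-polygon irreducibility analysis, and the reversal pairs $Xf(X)$, $X^{n+1}f(1/X)$ of Theorem~\ref{thm-redpols} for the reducible case).
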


Part (i) of Theorem~\ref{thm-main3} shows that the condition that the $(n-1)^{\text{st}}$ powers of $I_f$ and $I_g$ belong to the same ideal class cannot in general be dropped from Theorem~\ref{thm-main}(i).
Part (ii) shows that in all degrees $n\geq4$, the notion of Hermite equivalence is strictly weaker than $\GL_2(\Z)$-equivalence.

\begin{remark}
In the recent book of Narkiewicz~\cite{N2019}, on pp.~36--37, there is an incorrect, misleading reference, which suggests that Hermite proved that the polynomials with given discriminant lie in finitely many $\GL_2(\Zz )$-equivalence classes, when in fact Hermite had only proven this for Hermite equivalence classes. While Hermite could prove his finiteness result using his reduction theory of quadratic forms, the corresponding result for $\GL_2(\Z)$-equivalence classes requires much deeper tools not available to Hermite, namely, finiteness results for unit equations. One of our motivations in writing this article was to correct this misleading reference in Narkiewicz's book~\cite{N2019} and to illustrate by concrete examples that Hermite equivalence is in general weaker than the $\GL_2(\Z)$-equivalence and $\Z$-equivalence of polynomials.
\end{remark}

\subsection{Organization}
The rest of this paper is organized as follows. In \S\ref{sec-elem}, we prove a number of fundamental properties about Hermite equivalence of polynomials. In \S\ref{section3}, we give an interpretation of Hermite equivalence in terms of invariant orders and ideals, thus proving Theorem~\ref{thm-main}. We also use this intepretation to prove Theorem~\ref{thm-main3}(i). In \S\ref{sec-allthefiniteness}, we survey the literature on finiteness theorems for polynomial equivalence and observe that these theorems also apply to Hermite equivalence, thus proving Theorem~\ref{thm-main2}. We finish in \S\ref{sec-alltheexamples} by constructing the infinite collections of examples described in Theorem~\ref{thm-main3}(ii).

\section{Elementary considerations} \label{sec-elem}

In this section, we use elementary arguments to establish several important properties about Hermite equivalence of polynomials. In \S\ref{section3}, we demonstrate that these properties are straightforward consequences of our characterization of Hermite equivalence in terms of invariant orders and ideals.

\subsection{Content and primitivity}
Recall that the \emph{content} of a polynomial with integer coefficients is the positive greatest common divisor of its coefficients. A polynomial with integer coefficients is called primitive it its content is equal to $1$.

\begin{thm} \label{thm-prec}
Let $f\in \Z[X]$ be a polynomial of degree $n$ with content $c$. Then $[f]$ has integer coefficients, and its content is $c^{n-1}$.
\end{thm}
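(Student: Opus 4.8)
The plan is to reinterpret the Hermite form as a resultant, which makes both the integrality and the content computation transparent. Writing $P_{\underline{X}}(Y)=X_1Y^{n-1}+X_2Y^{n-2}+\cdots+X_n$ for the ``generic'' polynomial of formal degree $n-1$ whose coefficients are the variables $X_j$, each linear factor in \eqref{hdef} is exactly $P_{\underline{X}}(\al_i)$, so that
\[
[f](\underline{X})=f_0^{n-1}\prod_{i=1}^n P_{\underline{X}}(\al_i)=\on{Res}_Y\bigl(f(Y),\,P_{\underline{X}}(Y)\bigr),
\]
where the resultant is taken with respect to $Y$, with $f$ regarded as a form of degree $n$ and $P_{\underline{X}}$ as a form of degree $n-1$; the last equality is the standard product formula for the resultant, valid since $\deg f=n$ forces $f_0\neq 0$. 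Because the resultant is the determinant of the Sylvester matrix, it is a polynomial with integer coefficients in $f_0\kdots f_n\in\Z$ and in $X_1\kdots X_n$. Reading off the coefficients of the monomials in $\underline{X}$ then shows at once that $[f]$ has integer coefficients.

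For the content I would first reduce to the primitive case. Write $f=c\,f^{*}$ with $f^{*}\in\Z[X]$ primitive of degree $n$. Since $f$ and $f^{*}$ share the roots $\al_1\kdots\al_n$ and have leading coefficients $f_0=cf_0^{*}$, the explicit formula \eqref{hdef} gives $[f]=c^{\,n-1}[f^{*}]$. As $c^{\,n-1}$ is a positive integer and $[f^{*}]$ has integer coefficients, the content of $[f]$ equals $c^{\,n-1}$ times the content of $[f^{*}]$. It therefore suffices to show that $[f^{*}]$ is primitive, i.e.\ to prove the theorem when $c=1$.

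So assume $f$ is primitive; the claim is that $[f]$ is primitive. Primitivity is local: $[f]$ is primitive if and only if for every prime $p$ the reduction $\overline{[f]}\in\Ff_p[X_1\kdots X_n]$ is a nonzero form. Fix $p$. Since forming the Sylvester determinant commutes with reduction mod $p$, we have $\overline{[f]}(\underline{X})=\on{Res}_Y(\overline f,\,P_{\underline{X}})$ over $\Ff_p$. To see this form is nonzero I would exhibit a single specialization of $\underline{X}$ over $\overline{\Ff_p}$ at which it does not vanish: choose distinct $\ga_1\kdots\ga_{n-1}\in\overline{\Ff_p}$ that are not roots of $\overline f$ — possible because $f$ primitive forces $\overline f\neq0$, so $\overline f$ has finitely many roots while $\overline{\Ff_p}$ is infinite — and specialize $\underline{X}$ so that $P_{\underline{X}}(Y)=\prod_{k=1}^{n-1}(Y-\ga_k)$, in particular $X_1=1$. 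Applying the product formula in terms of the roots of $P_{\underline{X}}$ then gives
\[
\on{Res}_Y(\overline f,\,P_{\underline{X}})=\pm\prod_{k=1}^{n-1}\overline f(\ga_k)\neq 0,
\]
so $\overline{[f]}\neq0$ for every $p$ and $[f]$ is primitive, completing the proof.

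The main obstacle is precisely this last step, and specifically the primes $p\mid f_0$. At such $p$ the leading coefficient of $\overline f$ vanishes, the degree of $\overline f$ drops, and the naive expression $f_0^{\,n-1}\prod_i P_{\underline{X}}(\al_i)$ degenerates to $0$ because the roots of $\overline f$ as a degree-$n$ form are no longer defined. The resultant viewpoint is what rescues the argument: the Sylvester determinant is insensitive to such degree drops, and invoking the product formula in terms of the roots $\ga_k$ of $P_{\underline{X}}$ rather than the roots of $\overline f$ avoids raising the vanishing leading coefficient to any power. Here one should verify carefully the fact underlying the displayed identity, namely that $\on{Res}_Y(P_{\underline{X}},\overline f)=X_1^{\,n}\prod_k\overline f(\ga_k)$ requires only that $P_{\underline{X}}$ attain its formal degree $n-1$ (guaranteed by $X_1=1$) and not that $\overline f$ attain its formal degree $n$; granting this, the computation goes through uniformly in every residue characteristic.
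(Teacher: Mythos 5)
Your proof is correct. It does not follow the paper's official proof, which is ideal-theoretic: there one works in the splitting field $K$ of $f$ and applies Gauss's Lemma for Dedekind domains (multiplicativity of content ideals) to obtain $([f])=(f_0)^{n-1}(1,\alpha_1)^{n-1}\cdots(1,\alpha_n)^{n-1}=(f)^{n-1}$ as fractional $\OO_K$-ideals, from which integrality and the content $c^{n-1}$ both follow in one stroke. Your route --- identifying $[f]$ with the Sylvester resultant of $f$ and $P_{\uX}$, reducing to the primitive case, and proving primitivity modulo every prime --- is essentially the alternative proof the paper itself sketches in the remark directly after the theorem. The one place you genuinely differ from that remark is the primitivity step: the paper argues that vanishing of $\on{Res}(\phi_{\uX},f)$ mod $p$ would force $\phi_{\uX}$ and $\overline{f}$ to share a common factor over $\Ff_p$, which is impossible when $\overline{f}\neq 0$; you instead specialize $\uX$ over $\overline{\Ff_p}$ so that $P_{\uX}$ becomes a monic split polynomial whose roots avoid those of $\overline{f}$, and apply the one-sided product formula. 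Both devices correctly handle the crux --- the degree drop of $\overline{f}$ at primes $p\mid f_0$ --- because in each case it is the polynomial with leading coefficient $X_1$ (never zero mod $p$), not $\overline{f}$, that must attain its formal degree; your final paragraph identifies this accurately, and the formal-degree fact you defer to is indeed standard. The trade-off between the two proofs: the paper's Gauss's-Lemma argument is shorter but imports Dedekind-domain machinery, while your resultant-plus-specialization argument is entirely elementary and makes the mod-$p$ behavior completely explicit.
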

\begin{proof}
Let $K$ denote the splitting field of $f$. Denote by $(\alpha_1,\dots,\alpha_s)$ the fractional ideal of $\OO_K$ generated by $\alpha_1,\dots,\alpha_s\in K$. Given a polynomial $F$ with coefficients in $K$, denote by $(F)$ the fractional ideal with respect to $\OO_K$ generated by the coefficients of $F$. Then by Gauss' Lemma for Dedekind domains, we have $(FG)=(F) \cdot (G)$ for any two polynomials $F,G\in K[X_1,\dots,X_r]$.

Now write $f=f_0(X-\alpha_1)\cdots (X-\alpha_n)$, with $\alpha_1,\dots,\alpha_n\in K$ and $f_0\in \Z$. Then Gauss' Lemma implies that $(f)=(f_0)(1,\alpha_1)\cdots(1,\alpha_n)$ and $([f])=(f_0)^{n-1}(1,\alpha_1)^{n-1}\cdots(1,\alpha_n)^{n-1}=(f)^{n-1}$.
\end{proof}

\begin{remark}
Theorem~\ref{thm-prec} may also be proven quite explicitly, without relying on Gauss' Lemma. Indeed, if we write $\phi_{\underline{X}}(Y)=X_1Y^{n-1}+X_2Y^{n-2}+\cdots +X_n$, then one verifies that $[f]$ is simply the resultant of $\phi_{\uX}(Y)$ and $f(Y)$; i.e., we have that
\begin{equation*}
[f](\uX)={\rm Res}(\phi_{\uX},f)=
\left|\begin{array}{llllll}
X_1 &\ldots &X_n    &          &         &
\\
    &\ddots &       &\ddots    &         &
\\
    &       &\ddots &          &\ddots   &
\\
    &       &       &X_1       &\ldots   &X_n
\\
f_0 &\ldots &f_{n-1}&f_n       &         &
\\
    &\ddots &       &          &\ddots   &

\\
    &       &f_0    &\ldots    &f_{n-1}  &f_n
\end{array}\right| ,
\end{equation*}
where the first $n$ rows consist of $X_1\kdots X_n$ and the last $n-1$ rows
of $f_0\kdots f_{n-1},f_n$.
It follows that $[f]$ has integral coefficients. First assume that $f$ is primitive.
If $[f]$ is not primitive, then there is a prime $p$
such that $\on{Res}(\phi_{\underline{X}}, f) \equiv 0 \pmod p$. But then, $\phi_{\underline{X}}$ and $f$, viewed as polynomials over $\mathbb{F}_p[X_1, \dots, X_n]$, share a common factor. This happens if and only if $f \equiv 0 \pmod p$, which is impossible because $f$ is primitive. Hence $[f]$
is primitive as well.
Next, assume that $f$ has content $c$. Write $f'=f/c$. Then $f'$ is primitive and $[f]=c^{n-1}[f']$, which implies that
$[f]$ has content $c^{n-1}$.
\end{remark}

\begin{cor}
Let $f,g\in \Z[X]$ be two non-zero polynomials with contents $c_f,\, c_g$ respectively,
and let $f'\defeq c_f^{-1}f$, $g'\defeq c_g^{-1}g$ be the corresponding primitive polynomials. 
Then $f$ and $g$ are Hermite equivalent if and only if $f'$ and $g'$ are Hermite equivalent and $c_g= c_f$.
\end{cor}
\begin{proof}
First assume that $f$ and $g$ are Hermite equivalent. So $[g](X)=\pm[f](UX)$ for some matrix $U\in \on{GL}_n(\Z)$. By Theorem~\ref{thm-prec}, $[f]$ has content $c_f^{n-1}$, and $[f](UX)$ has the same content as $[f]$. Further, $[g]$ has content $c_g^{n-1}$. So $c_g= c_f$.

Since $[f]=c_f^{n-1}[f']$ and $[g]=c_g^{n-1}[g']$ it follows that $[g'](X)=\pm[f'](UX)$.
Hence $f'$ and $g'$ are Hermite equivalent. The proof of the if-part is left to the reader.
\end{proof}

\subsection{Hermite equivalence and $\on{GL}_2(\Z)$-equivalence}
We now give an elementary and explicit proof that $\on{GL}_2(\Z)$-equivalence implies Hermite equivalence:

\begin{thm} \label{Prop2.1}
Let $f,g \in \Z[X]$ be $\on{GL}_2(\Z)$-equivalent polynomials. Then $f$ and $g$ are Hermite equivalent. In particular, if $f$ and $g$ are monic and $\Z$-equivalent, then they are Hermite equivalent.
\end{thm}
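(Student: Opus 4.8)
The plan is to work directly with the product formula~\eqref{hdef} for the Hermite form and to track how the roots and the leading coefficient of $f$ transform under a $\GL_2(\Z)$-substitution. Let $M=\bigl(\begin{smallmatrix}a&b\\c&d\end{smallmatrix}\bigr)\in\GL_2(\Z)$ realize the equivalence, so that $g(X)=\pm(cX+d)^nf\bigl(\tfrac{aX+b}{cX+d}\bigr)$, and factor $f=f_0\prod_{i=1}^n(X-\alpha_i)$ over the splitting field. Expanding the right-hand side gives $g(X)=\pm f_0\prod_{i=1}^n\bigl((a-c\alpha_i)X-(d\alpha_i-b)\bigr)$, from which I would read off that the roots of $g$ are $\beta_i=\tfrac{d\alpha_i-b}{a-c\alpha_i}$ and that its leading coefficient is $g_0=\pm f_0\prod_{i=1}^n(a-c\alpha_i)$. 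Since $g$ again has degree $n$, each factor $a-c\alpha_i$ is nonzero, so these expressions are legitimate.

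The key computation is to rewrite the linear factors of $[g]$ in terms of those of $[f]$. Using the notation $\phi_{\uX}(Y)=X_1Y^{n-1}+\cdots+X_n$ from the remark above, the factor of $[g]$ attached to $\beta_i$ is $\phi_{\uX}(\beta_i)$. Clearing denominators yields the polynomial identity (in a formal variable $\alpha$)
\[
(a-c\alpha)^{n-1}\,\phi_{\uX}\!\Bigl(\tfrac{d\alpha-b}{a-c\alpha}\Bigr)=\sum_{j=1}^n(d\alpha-b)^{n-j}(a-c\alpha)^{j-1}X_j ,
\]
whose right-hand side has degree exactly $n-1$ in $\alpha$, with each coefficient an \emph{integral} linear form in $X_1,\dots,X_n$. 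Hence there is an integer matrix $U$ with $(a-c\alpha_i)^{n-1}\phi_{\uX}(\beta_i)=\phi_{U\uX}(\alpha_i)$ for every $i$. Multiplying over all $i$, substituting $[f](U\uX)=f_0^{n-1}\prod_i\phi_{U\uX}(\alpha_i)$ and the formula $g_0^{n-1}=(\pm1)^{n-1}f_0^{n-1}\prod_i(a-c\alpha_i)^{n-1}$, the powers of $\prod_i(a-c\alpha_i)$ cancel exactly, leaving the identity $[g](\uX)=\pm[f](U\uX)$.

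It then remains only to verify that $U\in\GL_n(\Z)$, which I expect to be the crux. The matrix $U$ is (the transpose of) the matrix through which $\GL_2$ acts on binary forms of degree $n-1$ in the monomial basis, evaluated at the adjugate $\bigl(\begin{smallmatrix}d&-b\\-c&a\end{smallmatrix}\bigr)$ of $M$: the substitution $Y\mapsto dY-bZ$, $Z\mapsto -cY+aZ$ sends the binary form $\sum_j X_jY^{n-j}Z^{j-1}$ to the one whose coefficients are the entries of $U\uX$. This action manifestly preserves the integer lattice, and its determinant is the product of the eigenvalues $\lambda^{a}\mu^{\,n-1-a}$ ($0\le a\le n-1$) of the $(n-1)$-st symmetric power, namely $(\det M)^{\binom{n}{2}}$. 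As $\det M=\pm1$, this forces $\det U=\pm1$, so $U\in\GL_n(\Z)$ and $f,g$ are Hermite equivalent. Finally, the ``in particular'' statement is immediate: a $\Z$-equivalence $g(X)=\varepsilon^nf(\varepsilon X+a)$ is the special case $M=\bigl(\begin{smallmatrix}\varepsilon&a\\0&1\end{smallmatrix}\bigr)\in\GL_2(\Z)$, so $\Z$-equivalent monic polynomials are in particular $\GL_2(\Z)$-equivalent and hence Hermite equivalent.
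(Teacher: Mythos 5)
Your proof is correct and follows essentially the same route as the paper's: both exhibit the required matrix $U$ as the induced action of the $\GL_2(\Z)$-substitution (via its adjugate) on the coefficient vectors of binary forms of degree $n-1$, an integer matrix satisfying $[g](\uX)=\pm[f](U\uX)$. The only divergence is in how unimodularity is checked: the paper uses the functorial property $t(\gamma_1\gamma_2)=t(\gamma_2)t(\gamma_1)$ and $t(I_2)=I_n$, so that $t(\gamma)^{-1}=t(\gamma^{-1})$ is again integral, whereas you compute $\det U=(\det M)^{\binom{n}{2}}=\pm 1$ from the eigenvalues of the symmetric power; both verifications are standard, and your explicit cancellation of the factors $(a-c\alpha_i)^{n-1}$ against $g_0^{n-1}$ (which the paper sidesteps by writing the roots in homogeneous coordinates) is also sound.
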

\begin{proof}
Write $f(X)=\prod_{i=1}^n (\alpha_{i,1}X-\alpha_{i,2})$. Then
\[
[f](\uX )=\prod_{i=1}^n (\alpha_{i,2}^{n-1}X_1+\alpha_{i,2}^{n-2}\alpha_{i,1}X_2+\cdots +\alpha_{i,1}^{n-1}X_n)=\prod_{i=1}^n \langle \underline{a}_i,\uX\rangle ,
\]
where $\underline{a}_i=(\alpha_{i,2}^{n-1}\kdots \alpha_{i,1}^{n-1})^T$
and $\langle\cdot ,\cdot\rangle$ denotes the standard inner product.
For $\gamma=\bigl(\begin{smallmatrix} a&b\\ c&d \end{smallmatrix}\bigr)$ write
\begin{align*}
\gamma f(X) &=(cX+d)^nf\bigl(\medfrac{aX+b}{cX+d}\bigr)
=\prod_{i=1}^n (\beta_{i,1} X-\beta_{i,2}),
\end{align*}
where $\beta_{i,1}=\alpha_{i,1}a-\alpha_{i,2}c$ and $\beta_{i,2}= -(-\alpha_{i,1}b +\alpha_{i,2}d)$. Then
\begin{align*}
[\gamma f](\uX ) &=\prod_{i=1}^n (\beta_{i,2}^{n-1}X_1+\beta_{i,2}^{n-2}\beta_{i,1}X_2+\cdots +\beta_{i,1}^{n-1}X_n)
\\
&=
\prod_{i=1}^n \langle t(\gamma )\underline{a}_i ,\uX\rangle
=\prod_{i=1}^n \langle \underline{a}_i, t(\gamma )^T\uX\rangle =[f](t(\gamma )^T\uX ),
\end{align*}
where $t(\gamma )$ is an $n\times n$ matrix whose entries are polynomials in $\Zz [a,b,c,d]$. One easily verifies that for any two $2\times 2$ matrices $\gamma_1,\gamma_2$ one has $t(\gamma_1\gamma_2)=t(\gamma_2)t(\gamma_1)$, and that $t(I_2)=I_n$, where $I_m$ is the $m\times m$ identity matrix. In particular, if $g=\pm \gamma f$ with $\gamma\in\GL_2(\Zz )$, then $[g](\uX )=\pm [f](t(\gamma )^T\uX )$, $t(\gamma )^T\in\GL_n(\Zz )$, so $f$ and $g$ are Hermite equivalent.
\end{proof}

\noindent As mentioned in Remark~\ref{rmk-2}, the converse of Theorem~\ref{Prop2.1} holds when $n = 2$. Combining the result of Levi--Delone--Faddeev stated in Remark~\ref{rmk-3} with Corollary~\ref{cor3.2} (to follow), we deduce that the converse also holds when $n = 3$. See \S\ref{section5} for examples of polynomials in every degree $n \geq 4$ for which the converse fails.

\subsection{Discriminant equalities}
The {\it discriminant} of a decomposable form of degree $n$ in $n$ variables
\[
F(\underline{X})=\prod_{i=1}^n (\gamma_{i,1}X_1+\cdots +\gamma_{i,n}X_n)
\]
is defined as
\[
D(F) \defeq \left( \det \big(\gamma_{i,j}\big)_{i,j=1\kdots n}\right)^2.
\]
Note that if $F,G$ are two decomposable forms of degree $n$ in $n$ variables
with $G(\uX)=\pm F(U\uX)$ for some $U\in\GL_n(\Zz )$, then $D(G)=D(F)$.

\begin{thm} \label{cor-discs}
Let $f \in \Z[X]$ be a polynomial of degree $n$. Then $D([f]) = D(f)$.
\end{thm}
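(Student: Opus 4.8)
The plan is to evaluate $D([f])$ directly from its definition and to recognize the resulting determinant as a Vandermonde determinant. First I would read off the linear factors of $[f]$ from~(\ref{hdef}): the $i$-th factor is $\alpha_i^{n-1}X_1+\alpha_i^{n-2}X_2+\cdots+X_n$, so the coefficient of $X_j$ in it is $\alpha_i^{n-j}$. Thus $[f]$ is the product of the overall scalar $f_0^{n-1}$ with the $n$ linear forms whose coefficient matrix is $V=(\alpha_i^{n-j})_{i,j=1\kdots n}$. To fit the shape required by the definition of $D(\cdot)$, one must absorb the scalar $f_0^{n-1}$ into the linear factors; the value of $D$ does not depend on how this is done, since rescaling the $i$-th linear form by $c_i$ (with $\prod_i c_i=f_0^{n-1}$) multiplies the $i$-th row of the coefficient matrix, and hence its determinant, by $c_i$, giving an overall factor $f_0^{n-1}$. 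Therefore $\det(\gamma_{i,j})=f_0^{n-1}\det V$, and so $D([f])=f_0^{2(n-1)}(\det V)^2$.

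Next I would identify $V$ as the Vandermonde matrix in $\alpha_1\kdots\alpha_n$ with its columns written in reverse order. Reversing the columns only changes the sign of the determinant, so $(\det V)^2=\prod_{i<j}(\alpha_i-\alpha_j)^2$. Substituting this in gives $D([f])=f_0^{2n-2}\prod_{i<j}(\alpha_i-\alpha_j)^2$. Finally I would compare with the standard formula for the discriminant of a degree-$n$ polynomial, $D(f)=f_0^{2n-2}\prod_{i<j}(\alpha_i-\alpha_j)^2$, which is exactly the expression just obtained, establishing $D([f])=D(f)$.

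The computation is entirely routine, and there is no genuine obstacle; the only point requiring care is the bookkeeping of the factor $f_0^{n-1}$. Specifically, one should verify that after squaring it produces precisely the power $f_0^{2n-2}$ that appears in the polynomial discriminant, so that the full expressions agree and not merely the factors $\prod_{i<j}(\alpha_i-\alpha_j)^2$. (Equivalently, one could instead invoke the alternative description of $[f]$ as $\on{Res}(\phi_{\uX},f)$ from the remark following Theorem~\ref{thm-prec} and compare discriminants via the resultant, but the Vandermonde computation above is the most direct route.)
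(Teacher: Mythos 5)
Your proof is correct and is essentially the paper's own argument: the paper likewise computes $D([f])$ via the Vandermonde determinant of the coefficient matrix $(\alpha_i^{n-j})$ and matches it against the formula $D(f)=f_0^{2n-2}\prod_{i<j}(\alpha_i-\alpha_j)^2$. Your write-up simply makes explicit the bookkeeping of the scalar $f_0^{n-1}$ that the paper leaves as ``an easy application of Vandermonde's identity.''
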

\begin{proof}
The discriminant of $f$ is given by
\[
D(f) \defeq f_0^{2n-2}\prod_{1\leq i<j\leq n} (\alpha_i -\alpha_j)^2,
\]
and an easy application of Vandermonde's identity gives
the desired equality $D(f) =D([f])$.
\end{proof}

Since the action of $\on{GL}_n(\Z)$ on decomposable forms in $n$ variables is discriminant-preserving, we obtain the following immediate consequence of Theorem~\ref{cor-discs}:
\begin{cor} \label{cor-discs2}
Hermite equivalent polynomials have the same discriminant.
\end{cor}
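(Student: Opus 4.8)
The plan is to chain together the two ingredients already assembled immediately above the statement: the identity $D([f]) = D(f)$ from Theorem~\ref{cor-discs}, and the $\GL_n(\Z)$-invariance of the discriminant of a decomposable form recorded in the remark following the definition of $D(F)$. First I would unpack the hypothesis. If $f,g \in \Z[X]$ are Hermite equivalent polynomials of degree $n$, then by the definition of Hermite equivalence there is a matrix $U \in \GL_n(\Z)$ with
\[
[g](\uX) = \pm\,[f](U\uX).
\]
Thus $[f]$ and $[g]$ are $\GL_n(\Z)$-equivalent decomposable forms.

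Next I would invoke the invariance statement, namely that $D(G) = D(F)$ whenever $G(\uX) = \pm F(U\uX)$ for some $U \in \GL_n(\Z)$, applied with $F = [f]$ and $G = [g]$; this gives $D([g]) = D([f])$. If one preferred to re-derive this invariance rather than cite it, one writes $[f](\uX) = \prod_{i=1}^n \langle \underline{a}_i, \uX\rangle$, so that $[f](U\uX) = \prod_{i=1}^n \langle U^T\underline{a}_i, \uX\rangle$; hence passing from $[f]$ to $[f](U\uX)$ multiplies the coefficient matrix of the linear forms on the right by $U$, and therefore multiplies its determinant by $\det U = \pm 1$. Squaring, as in the definition $D(F) = (\det(\gamma_{i,j}))^2$, kills this sign, and a global factor of $-1$ in $\pm F(U\uX)$ can be absorbed into a single linear factor and likewise disappears upon squaring.

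Finally I would apply Theorem~\ref{cor-discs} to both $f$ and $g$ and combine the three equalities to conclude
\[
D(f) = D([f]) = D([g]) = D(g),
\]
which is precisely the assertion. I do not expect any genuine obstacle here: once Theorem~\ref{cor-discs} and the elementary determinant computation underlying $\GL_n(\Z)$-invariance are in hand, the corollary is an immediate formal deduction. The substantive work has already been carried out in the Vandermonde computation proving Theorem~\ref{cor-discs} and in the remark establishing invariance; the corollary merely transports the discriminant equality across the $\GL_n(\Z)$-action that defines Hermite equivalence.
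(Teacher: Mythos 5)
Your proposal is correct and is exactly the paper's argument: the paper states Corollary~\ref{cor-discs2} as an immediate consequence of Theorem~\ref{cor-discs} combined with the $\GL_n(\Z)$-invariance of the discriminant of decomposable forms noted right after the definition of $D(F)$, which is precisely the chain $D(f) = D([f]) = D([g]) = D(g)$ you wrote. Your optional re-derivation of the invariance via $\det(\Gamma U) = \pm\det\Gamma$ is also sound, so there is nothing to correct.
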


\section{Interpretation of Hermite equivalence}\label{section3}

The purpose of this section is to give an interpretation of Hermite equivalence of integer polynomials $f$ and $g$ in terms of the invariant orders and ideals associated to $f$ and $g$, which we review in \S\ref{sec-rings}. We use this interpretation to prove Theorem~\ref{thm-main}.

\subsection{Rings and ideals associated to polynomials in $\Z[X]$} \label{sec-rings}

Let $f \in \Z[X]$ be a polynomial of degree $n$ as in~\eqref{eq-form} with $D(f) \neq 0$ and $f_0 \neq 0$. Consider the \'{e}tale $\Q$-algebra $K_f \defeq \Q[X]/(f(X))$, and let $\alpha$ be the image of $X$ in $K_f$. For $k \in \{0, \dots, n-1\}$, we define the free $\Z$-modules $I_f(k) \subset K_f$ with basis
\begin{equation}
\langle
1,\alpha,\ldots,\alpha^k,\zeta_{k+1},\ldots,\zeta_{n-1}
\rangle, \quad \text{where} \quad
\zeta_i=f_0\alpha^i+f_1\alpha^{i-1}+\cdots+f_{i-1}\alpha, \label{eq-zetadef}
\end{equation}
and we write $R_f \defeq I_f(0)$ and $I_f \defeq I_f(1)$.  Before we describe the basic properties of the $\Z$-modules $R_f$ and $I_f(k)$, we require the following definition of the norm of a fractional ideal of an order in an \'{e}tale algebra:

\begin{defi}
For a given order $\OO$ in an \'{e}tale $\Q$-algebra $K$ and fractional ideal $I \subset K$ of $\OO$, we define the norm $N_{\OO}(I)$ of $I$ with respect to $\OO$ to be the absolute value of the determinant of the $\Z$-linear transformation taking a $\Z$-basis of $\OO$ to a $\Z$-basis of $I$.
\end{defi}

\begin{remark}
 Note in particular that if $\alpha\in K$, then the norm of the fractional ideal $\alpha \OO$ is just the absolute value of the determinant of the $\Q$-linear map $x\mapsto \alpha x$, and thus, $N_{\OO}(\alpha \OO)= |N_{\Q}^K(\alpha )|$.
\end{remark}

\noindent The following theorem summarizes the basic properties of the $\Z$-modules $R_f$ and $I_f(k)$:

\begin{thm} \label{thm-everything}
With notation as above, the following properties hold:
\begin{itemize}[leftmargin=25pt]
\item[$(i)$] $R_f$ is a ring of rank $n$ over $\Z$ and thus an order in $K_f$;
\item[$(ii)$] $D(f) = D(R_f)$;
\item[$(iii)$] For each $k \in \{0, \dots, n-1\}$, the $\Z$-module $I_f(k)$ is an $R_f$-submodule of $K_f$ and hence a fractional ideal of $R_f$. We have that $I_f(k)$ is invertible if and only if $f$ is primitive;
\item[$(iv)$] $I_f(k) = I_f^k$, and $N_{R_f}(I_f(k)) = |f_0|^{-k}$;
\item[$(v)$] $I_f(n-2)$ is an explicit representative of the ideal class of
the ``inverse different'' or the ``dualizing module'' of $R_f$;
\item[$(vi)$] If $f' \in \Z[X]$ is primitive of degree $n$ such that $f' \mid f$, then the ring of $R_f$-module endomorphisms of $I_f(n-1)$ is isomorphic to $R_{f'}$ \mbox{$($i.e., $R_{f'} = \{\xi \in K_f : \xi I_f(n-1) \subseteq I_f(n-1)\}${}$)$; and}
\item[$(vii)$] $R_f$ is isomorphic to the ring of global functions on the subscheme of $\P^1_{\Z}$ cut out by the homogenization of $f$, and $I_f(k)$, as an $R_f$-module,
consists of the sections of the pullback of $\mathcal O(k)$ from $\P^1_{\Z}$.
Hence if $g$ is the translate of $f$ by some $\gamma \in \GL_2(\Z)$,
then the action of $\gamma$ on $\P^1_{\Z}$ induces an isomorphism between $R_f$ and $R_{g}$, and this isomorphism identifies the ideal classes $I_f$ and $I_{g}$. Explicitly, if $\gamma=\bigl(\begin{smallmatrix}
a & b\\ c& d\end{smallmatrix}\bigr)$, then
$I_{g}(k) = \left(
{-b\alpha+a}\right)^{-k} I_f(k).$
\end{itemize}
\end{thm}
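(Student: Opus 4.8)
The plan is to build everything on a few explicit algebraic identities among the $\zeta_i$ and then read off the geometric statement $(vii)$ from the two standard affine charts of $\P^1_{\Z}$. First I would record the recursion $\zeta_i = \alpha\theta_{i-1}$, where $\theta_i \defeq f_0\alpha^i + f_1\alpha^{i-1} + \cdots + f_i$ satisfies $\theta_0 = f_0$, $\theta_i = \alpha\theta_{i-1} + f_i$, and $\theta_n = f(\alpha) = 0$. Dividing the relation $f(\alpha) = 0$ by $\alpha^{n-k}$ yields the dual expansion
\[ \zeta_k = -f_k - f_{k+1}\alpha^{-1} - \cdots - f_n\alpha^{k-n}, \]
which exhibits each $\zeta_k$ simultaneously as an element of $\Z[\alpha]$ and of $\Z[\alpha^{-1}]$. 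These two expansions, together with $\alpha^{a}\alpha^{-k}\in\Z[\alpha^{-1}]$ for $a\le k$, are the workhorses for all seven parts.

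For $(i)$, I would derive the multiplication rule $\zeta_1\zeta_i = f_0\zeta_{i+1} - f_i\zeta_1$ (with the convention $\zeta_n = -f_n$) and, using $f_0\zeta_i = \zeta_1(\zeta_{i-1}+f_{i-1})$, induct on $\min(i,j)$ to write every product $\zeta_i\zeta_j$ as a $\Z$-combination of the $\zeta$'s; this shows $R_f$ is closed under multiplication, with rank $n$ visible from the basis. The same relations give $\zeta_i\,I_f(k)\subseteq I_f(k)$, so each $I_f(k)$ is a full-rank fractional $R_f$-ideal, which is the first assertion of $(iii)$. Part $(ii)$ is then pure linear algebra: the change of basis from $1,\zeta_1,\dots,\zeta_{n-1}$ to the power basis $1,\alpha,\dots,\alpha^{n-1}$ is lower triangular with diagonal $1,f_0,\dots,f_0$, hence determinant $f_0^{n-1}$, so $D(R_f) = f_0^{2(n-1)}\prod_{i<j}(\alpha_i-\alpha_j)^2 = D(f)$ by Vandermonde. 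For $(iv)$, the analogous triangular matrix for the basis of $I_f(k)$ has determinant $f_0^{n-1-k}$, giving $N_{R_f}(I_f(k)) = |f_0|^{-k}$; and $I_f(k) = I_f^k$ follows by checking $I_f^k\subseteq I_f(k)$ on generators and matching covolumes via this norm (equivalently, from $\OO(k) = \OO(1)^{\otimes k}$ once $(vii)$ is in place).

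Writing $F$ for the homogenization of $f$ and $S$ for the subscheme of $\P^1_{\Z}$ it cuts out, the charts $Y\neq 0$ and $X\neq 0$ have coordinate rings $\Z[\alpha]$ and $\Z[\alpha^{-1}]$ (via $\alpha = X/Y$), so global sections of $\OO_S$, resp.\ of $\OO(k)|_S$, are the $u\in\Z[\alpha]$ with $u\in\Z[\alpha^{-1}]$, resp.\ with $\alpha^{-k}u\in\Z[\alpha^{-1}]$; by the two expansions above these modules are exactly $R_f$ and $I_f(k)$ when $f$ is primitive, where $S$ is finite flat of degree $n$. Functoriality of pullback under the $\GL_2(\Z)$-action on $\P^1_{\Z}$ then gives the isomorphism $R_f\cong R_g$ identifying the classes of $I_f$ and $I_g$, and tracking the chart trivializations produces the explicit factor $(-b\alpha+a)^{-k}$; this is $(vii)$. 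The invertibility clause of $(iii)$ is checked locally: $I_f$ is locally principal at every prime not dividing the content and fails to be so at primes dividing it, i.e.\ $I_f(k)$ is invertible iff $f$ is primitive. For $(v)$ I would invoke adjunction for the degree-$n$ Cartier divisor $S\hookrightarrow\P^1_{\Z}$: one has $\omega_{S/\Z}\cong(\omega_{\P^1_{\Z}/\Z}\otimes\OO(S))|_S = \OO(n-2)|_S$, whose sections are $I_f(n-2)$, and since $\omega_{S/\Z}\cong\on{Hom}_{\Z}(R_f,\Z)$ is the inverse different, $I_f(n-2)$ represents its ideal class. Finally, for $(vi)$ note $I_f(n-1) = \langle 1,\alpha,\dots,\alpha^{n-1}\rangle$ and compute its ring of multipliers directly, showing it equals $R_{f'}$ for the primitive part $f'$ (immediate when $f$ is primitive, since then $I_f(n-1) = I_f^{n-1}$ is invertible with multiplier ring $R_f = R_{f'}$; the general case follows by clearing the content $c$, using $\zeta_i^{(f)} = c\,\zeta_i^{(f')}$).

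The identities of the first step and the linear algebra of $(ii)$ and $(iv)$ are routine; the real work is concentrated in three places. First, making the global-sections identification in $(vii)$ airtight requires proving the reverse inclusion prime by prime and correctly handling the non-primitive case, where $V(F)$ degenerates and the honest order is $R_{f'}\supseteq R_f$ rather than $R_f$ itself. Second, $(v)$ needs the duality/adjunction input, which is where the exponent $n-2$ genuinely originates. Third, and most delicately, the multiplier-ring computation $(vi)$ is the crux, since it simultaneously governs the ring structure in the primitive case and explains the primitivity phenomena appearing in $(iii)$ and $(vii)$; I expect $(vi)$, together with the non-primitive bookkeeping it controls, to be the main obstacle.
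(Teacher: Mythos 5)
Your proposal is sound in outline, but it is worth stating plainly how it relates to the paper: the paper does not prove this theorem in any self-contained way. Its proof is a list of citations --- (i)--(ii) to Birch--Merriman and Nakagawa, (iii) and (vii) to Wood (see also Simon), (v) to Simon's Proposition 14 --- plus two one-line observations: (iv) is an elementary calculation, and (vi) follows from (iii) because $I_f(n-1)=I_{f'}(n-1)$ is an invertible fractional $R_{f'}$-ideal. Your plan reconstructs the content of those citations from first principles, and it does so along essentially the lines of the cited works themselves: the multiplication-table argument for (i)--(ii) is Birch--Merriman/Nakagawa's, the two-chart gluing description of $R_f$ and $I_f(k)$ as intersections inside $\Z[\alpha]$ and $\Z[\alpha^{-1}]$ is Wood's, the adjunction computation $\omega_{S/\Z}\cong(\omega_{\P^1_{\Z}/\Z}\otimes\OO(S))|_S=\OO(n-2)|_S$ is the geometric form of Simon's result, and your treatment of (vi) (pass to the primitive part $f'$ via $\zeta_i^{(f)}=c\,\zeta_i^{(f')}$, then use that an invertible ideal has multiplier ring equal to its own ring) is identical to the paper's. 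What your route buys is self-containedness and explicit formulas; what it costs is that the two steps you yourself isolate as ``the real work'' --- the reverse inclusion in the global-sections identification of (vii), and the ``invertible iff primitive'' criterion in (iii) --- are precisely the theorems of Wood that the paper cites, and for these your proposal names the strategy (localize prime by prime, treating primes dividing the content separately) without executing it. That is fine for a plan, but the hard core of (iii)/(vii) remains outstanding, including the degenerate cases $f(0)=0$ (where $\alpha^{-1}$ does not exist in $K_f$ and the second chart must be handled scheme-theoretically rather than through $\Z[\alpha^{-1}]$) and imprimitive $f$ (where $V(F)$ is not finite over $\Z$).

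One concrete algebraic slip in (i): the two identities you propose, $\zeta_1\zeta_i=f_0\zeta_{i+1}-f_i\zeta_1$ and $f_0\zeta_i=\zeta_1(\zeta_{i-1}+f_{i-1})$, are both correct but do not close your induction on $\min(i,j)$ as stated. Using the second identity multiplies the target product $\zeta_i\zeta_j$ by $f_0$, and after applying the inductive hypothesis and the first identity you are left with a term $m\,\zeta_1/f_0=m\alpha$ whose integrality (i.e., $f_0\mid m$) is not established by the induction. The division-free identity that does close it is
\[
\zeta_i\zeta_j=\zeta_{i-1}\zeta_{j+1}+f_{i-1}\zeta_{j+1}-f_j\zeta_i ,
\]
with the conventions $\zeta_0=1$ and $\zeta_n=-f_n$; it follows from $\zeta_i=\alpha\theta_{i-1}$, $\theta_{i-1}=\zeta_{i-1}+f_{i-1}$ and $\alpha\zeta_j=\zeta_{j+1}-f_j\alpha$, the stray $\alpha$-terms cancelling. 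Iterating it moves $(i,j)$ to $(i-1,j+1)$ and terminates when an index reaches $0$ or $n$, yielding the integral multiplication table, and the same relations give the $R_f$-module structure of each $I_f(k)$. (For (iv), note also that both inclusions $I_f^k\subseteq I_f(k)$ and $I_f(k)\subseteq I_f^k$ can be checked directly on generators, so the covolume comparison is not needed.) With that repair, and with the Wood-type local analysis actually carried out, your plan does yield the theorem.
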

\begin{proof}
Points (i) and (ii) are results of Birch and Merriman~\cite[proof of Lemma 3]{BM1972}) and Nakagawa~\cite[Proposition~1.1]{Nakagawa}. Points (iii) and (vii) are results of Wood~\cite[\S2.1 and Appendix A]{Wood} (see also Simon~\cite[\S3]{MR1968893}), and Point (iv) is an elementary calculation. Point (v) is a result of Simon~\cite[Proposition~14]{MR2523319}, and Point (vi) follows upon observing that $I_f(n-1) = I_{f'}(n-1)$ is an invertible fractional ideal for $R_{f'}$ by Point (iii).
\end{proof}
\noindent In what follows, we call the ring $R_f$ the {\it invariant order} of $f$, and we call the fractional ideal $I_f$ the {\it invariant $($fractional$)$ ideal} of $f$.

\begin{remark}
Although this paper is largely concerned with polynomials of nonzero discriminant, the construction of $R_f$ and its associated fractional ideals $I_f(k)$ can also be carried out canonically for polynomials having discriminant zero, and even for the zero polynomial; see
Wood~\cite[\S\S2.3--2.4]{Wood}.
\end{remark}

\begin{remark} \label{rmk-princip}
Note that when $f_0 = 1$, the ring $R_f$ is simply the monogenic order $\Z[\alpha]$ generated by $\alpha$, and the ideals $I_f(k)$ are all equal to the unit ideal (in particular, they are all principal).
\end{remark}

\subsection{Interpretation of $[f]$ as the norm form of $I_f(n-1)$}

Let $K$ be an \'etale $\Q$-algebra of degree $n$. Then we can write $K=\prod_{j=1}^m K_j$, where
$K_1,\ldots ,K_m$ are number fields. Letting $\pi_j:\, K\to K_j$ be the projection
onto the $j$-th factor,
and $\sigma_{j,k}$ ($k=1,\dots , n_j\defeq [K_j:\Q ]$) the embeddings of $K_j$ in $\overline{\Q}$,
the trace and norm over $\Q$ of $\alpha\in K$ are equal to
\begin{align*}
Tr_{\Q}^K(\alpha )&=\sum_{j=1}^mTr_{\Q}^{K_j}(\pi_j(\alpha ))=\sum_{j=1}^m\sum_{k=1}^{n_j} \sigma_{j,k}\pi_j (\alpha ),
\\
N_{\Q}^K(\alpha )&=\prod_{j=1}^mN_{\Q}^{K_j}(\pi_j(\alpha ) )=\prod_{j=1}^m\prod_{k=1}^{n_j}\sigma_{j,k}\pi_j (\alpha ).
\end{align*}
The norm is naturally extended to polynomials in $K[X_1,\ldots ,X_n]$.
Thus, the norm over $\Q$ of such a
polynomial has its coefficients in $\Q$.

Let $\OO$ be an order in $K$, and let $I\subset K$ be a (not-necessarily-invertible) fractional ideal of $\OO$ having $\Z$-rank $n$. Then the {\it norm form} of $I$ with respect to $\OO$ and to the $\Z$-basis $\langle\alpha_1,\ldots,\alpha_n\rangle$ of $I$ is the decomposable integral form of degree $n$ in $n$ variables defined by
$$N_{I,\OO}(X_1,\ldots,X_n)\defeq\frac{N^K_\Q(\alpha_1 X_1+\cdots+\alpha_n X_n)}{N_{\OO}(I)}.
$$
This depends on the choice of a $\Z$-basis for $I$, but the $\on{GL}_n(\Z)$-equivalence class of $N_{I,\OO}$ is clearly independent of the choice of a basis.

The following lemma determines the discriminant of the norm form of a fractional ideal:
\begin{lem} \label{lem-discs}
With notation as above,
we have that $D(N_{I,\OO}) = D(\mathcal{O})$.
\end{lem}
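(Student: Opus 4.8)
The plan is to unwind $N_{I,\OO}$ into an explicit product of $n$ linear forms, read off its discriminant as the square of a determinant, identify that determinant-square with the lattice discriminant of $I$, and finally cancel the normalizing factor $N_\OO(I)$ against the index relation $D(I)=N_\OO(I)^2 D(\OO)$.

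First I would fix the $\Z$-basis $\alpha_1,\dots,\alpha_n$ of $I$ used to define $N_{I,\OO}$, and let $\sigma_1,\dots,\sigma_n$ be the $n$ embeddings $\sigma_{j,k}\circ\pi_j\colon K\to\OQq$ appearing in the discussion above. Applying $N_\Q^K=\prod_i\sigma_i$ coefficientwise to $\alpha_1X_1+\cdots+\alpha_nX_n$ shows that the numerator of $N_{I,\OO}$ is the decomposable form
\[
\prod_{i=1}^n\big(\sigma_i(\alpha_1)X_1+\cdots+\sigma_i(\alpha_n)X_n\big),
\]
whose coefficient matrix is $\Sigma:=(\sigma_i(\alpha_j))_{i,j}$. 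By the definition of the discriminant of a decomposable form, this numerator has discriminant $(\det\Sigma)^2$.

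Next I would record two elementary identities. Since $(\Sigma^T\Sigma)_{jk}=\sum_i\sigma_i(\alpha_j\alpha_k)=\Tr_\Q^K(\alpha_j\alpha_k)$, we obtain $(\det\Sigma)^2=\det\big(\Tr_\Q^K(\alpha_j\alpha_k)\big)_{j,k}$, which is precisely the discriminant $D(I)$ of the lattice $I$ with respect to the basis $\alpha_1,\dots,\alpha_n$. On the other hand, expressing the $\alpha_j$ through a $\Z$-basis of $\OO$ by a matrix $M\in\GL_n(\Q)$ with $|\det M|=N_\OO(I)$ and using bilinearity of the trace form gives $\det\big(\Tr_\Q^K(\alpha_j\alpha_k)\big)=(\det M)^2 D(\OO)$, i.e.\ $D(I)=N_\OO(I)^2 D(\OO)$. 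Note this uses only that $I$ is a full-rank $\Z$-lattice, so it remains valid even when $I$ is non-invertible.

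Finally I would track the factor $1/N_\OO(I)$. Writing $c:=N_\OO(I)$ and distributing $c^{-1/n}$ equally among the $n$ linear factors of the numerator multiplies $\det\Sigma$ by $c^{-1}$, hence scales the discriminant by $c^{-2}$. Combining the three computations,
\[
D(N_{I,\OO})=N_\OO(I)^{-2}(\det\Sigma)^2=N_\OO(I)^{-2}D(I)=N_\OO(I)^{-2}\cdot N_\OO(I)^2 D(\OO)=D(\OO).
\]
There is no deep obstacle here: the whole argument is linear algebra applied to the trace form. The only point that needs care is the bookkeeping of the scaling, namely verifying that the $c^{-2}$ produced by the normalization cancels exactly against the $N_\OO(I)^2$ coming from the index relation.
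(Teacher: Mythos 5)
Your proof is correct and follows essentially the same route as the paper's: identify the discriminant of the numerator $N_{\Q}^K(\alpha_1X_1+\cdots+\alpha_nX_n)$ with the lattice discriminant $D(\alpha_1,\ldots,\alpha_n)$ via the trace form, relate that to $D(\OO)$ through the change-of-basis matrix whose determinant has absolute value $N_{\OO}(I)$, and cancel against the $N_{\OO}(I)^{-2}$ coming from the normalization. Your write-up merely makes explicit (via the $\Sigma^T\Sigma$ computation) what the paper compresses into ``the definition of discriminant of a decomposable form and the expression for the trace.''
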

\begin{proof}
Choose a $\Z$-basis $\langle\omega_1,\ldots ,\omega_n\rangle$ of $\OO$, and let $\gamma \in \on{GL}_n(\Q)$ be the $\Q$-linear transformation taking $\omega_1,\ldots ,\omega_n$ to $\alpha_1,\ldots ,\alpha_n$.
From the definition of discriminant of a decomposable form and the expression for the trace mentioned above, it follows that the discriminant of $N_{\Q}^K(\alpha_1X_1+\cdots +\alpha_nX_n)$ is precisely
the discriminant $D(\alpha_1,\ldots ,\alpha_n)$ of the basis $\langle\alpha_1,\ldots ,\alpha_n\rangle$.
Thus,
\begin{equation*}
D(N_{I,\OO}) =N_{\OO}(I)^{-2}D(\alpha_1,\ldots ,\alpha_n)
=|\det\gamma |^{-2}D(\alpha_1,\ldots ,\alpha_n)=D(\omega_1,\ldots ,\omega_n)=D(\OO ). \qedhere
\end{equation*}
\end{proof}


The significance of the norm form of an ideal class is contained in the following theorem.
\begin{thm}\label{gennormformthm}
For each $i \in \{1, 2\}$, let $I_i$ be a fractional ideal of $\Z$-rank $n$ for an order $\mathcal{O}_i$ in an \'etale $\Q$-algebra~$K_i$ of degree $n$. If the norm forms $N_{I_1,\OO_1}(X_1,\ldots,X_n)$ and $N_{I_2,\OO_2}(X_1,\ldots,X_n)$
are $\GL_n(\Z)$-equivalent, then $K_1$ and $K_2$ are isomorphic, and under such an isomorphism, $I_1$ is identified with $\kappa I_2$ for some $\kappa \in K_2^\times$. Conversely, if there is an isomorphism $\varphi \colon K_1 \to K_2$ that identifies $\OO_1$ with $\OO_2$ and $I_1$ with $\kappa I_2$ for some $\kappa \in K_2^\times$, then $N_{I_1,\OO_1}(X_1, \dots, X_n)$ and $N_{I_2,\OO_2}(X_1, \dots, X_n)$ are $\on{GL}_n(\Z)$-equivalent.
\end{thm}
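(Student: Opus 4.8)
The plan is to prove both directions by factoring each norm form into linear forms over $\overline{\Q}$ and reading off the field and ideal structure from the factorization. Fix $\Z$-bases $\langle\alpha_1,\dots,\alpha_n\rangle$ of $I_1$ and $\langle\beta_1,\dots,\beta_n\rangle$ of $I_2$, and let $\tau_1,\dots,\tau_n\colon K_1\to\overline{\Q}$ and $\rho_1,\dots,\rho_n\colon K_2\to\overline{\Q}$ be the two families of $n$ $\Q$-algebra homomorphisms (the maps $\sigma_{j,k}\circ\pi_j$) occurring in the formula for $N_{\Q}^{K_i}$. These assemble into injective $\Q$-algebra homomorphisms $\iota_1=(\tau_1,\dots,\tau_n)\colon K_1\to\overline{\Q}^n$ and $\iota_2=(\rho_1,\dots,\rho_n)\colon K_2\to\overline{\Q}^n$, each with $n$-dimensional image. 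With this notation $N_{\Q}^{K_1}(\sum_i\alpha_iX_i)=\prod_{\ell}L_\ell$ with $L_\ell=\sum_i\tau_\ell(\alpha_i)X_i$, and similarly $N_{\Q}^{K_2}(\sum_i\beta_iX_i)=\prod_\ell M_\ell$ with $M_\ell=\sum_i\rho_\ell(\beta_i)X_i$; since the $\alpha_i$ span $K_1$, the matrix $(\tau_\ell(\alpha_i))$ is invertible, so the $L_\ell$ are nonzero and pairwise non-proportional, and likewise the $M_\ell$.

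For the forward direction, I would first absorb the matrix $U$ witnessing $\GL_n(\Z)$-equivalence into the basis of $I_2$: since $N_{I_2,\OO_2}(U\uX)$ is exactly the norm form of $I_2$ with respect to the new $\Z$-basis $\langle\sum_i U_{i1}\beta_i,\dots,\sum_i U_{in}\beta_i\rangle$, I may assume $N_{I_1,\OO_1}=\pm N_{I_2,\OO_2}$ as polynomials. Clearing the positive integer denominators $N_{\OO_i}(I_i)$, this reads $\prod_\ell L_\ell=c\prod_\ell M_\ell$ for some $c\in\Q^\times$. Because $\overline{\Q}[X_1,\dots,X_n]$ is a UFD in which each $L_\ell,M_\ell$ is irreducible, after relabelling the $\rho_\ell$ by a permutation I obtain $L_\ell=\lambda_\ell M_\ell$ for scalars $\lambda_\ell\in\overline{\Q}^\times$; comparing coefficients of $X_i$ gives the key relations $\tau_\ell(\alpha_i)=\lambda_\ell\,\rho_\ell(\beta_i)$ for all $\ell,i$. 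I then extract the scalar $\kappa$ by evaluating at $1\in K_1$: writing $1=\sum_i q_i\alpha_i$ with $q_i\in\Q$ and applying $\tau_\ell$ gives $1=\lambda_\ell\,\rho_\ell(\gamma)$ with $\gamma:=\sum_i q_i\beta_i\in K_2$, so $\rho_\ell(\gamma)=\lambda_\ell^{-1}\neq 0$ for every $\ell$, whence $\gamma\in K_2^\times$ and $\kappa:=\gamma^{-1}$ satisfies $\rho_\ell(\kappa)=\lambda_\ell$. Consequently $\iota_1(\alpha_i)=\iota_2(\kappa\beta_i)$ for all $i$, and since the $\alpha_i$ span $K_1$ this forces $\iota_1(K_1)=\iota_2(K_2)$; hence $\varphi:=\iota_2^{-1}\circ\iota_1\colon K_1\to K_2$ is a $\Q$-algebra isomorphism with $\varphi(\alpha_i)=\kappa\beta_i$, so $\varphi(I_1)=\langle\kappa\beta_1,\dots,\kappa\beta_n\rangle_{\Z}=\kappa I_2$, as required.

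For the converse I would simply transport the norm form along $\varphi$. Since $\varphi(I_1)=\kappa I_2$, the set $\{\kappa^{-1}\varphi(\alpha_i)\}$ is a $\Z$-basis of $I_2$, and from $N_{\Q}^{K_1}=N_{\Q}^{K_2}\circ\varphi$ one gets $N_{\Q}^{K_1}(\sum_i\alpha_iX_i)=N_{\Q}^{K_2}(\kappa)\,N_{\Q}^{K_2}(\sum_i(\kappa^{-1}\varphi(\alpha_i))X_i)$. The denominators match up to the same factor: because $\varphi(\OO_1)=\OO_2$ carries the basis-change matrix from $\OO_1$ to $I_1$ to one from $\OO_2$ to $\kappa I_2$, the determinant computation defining $N_{\OO}$ yields $N_{\OO_1}(I_1)=N_{\OO_2}(\kappa I_2)=|N_{\Q}^{K_2}(\kappa)|\,N_{\OO_2}(I_2)$. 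Dividing, $N_{I_1,\OO_1}$ equals $\pm N_{I_2,\OO_2}$ computed in the basis $\{\kappa^{-1}\varphi(\alpha_i)\}$, and since different $\Z$-bases of $I_2$ give $\GL_n(\Z)$-equivalent norm forms, the two forms are $\GL_n(\Z)$-equivalent.

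The step I expect to require the most care is the passage, in the forward direction, from the purely multiplicative matching $L_\ell=\lambda_\ell M_\ell$ of linear factors to a genuine $\Q$-algebra isomorphism $K_1\cong K_2$. A priori the $\lambda_\ell$ are unrelated elements of $\overline{\Q}$ and the bijection of factors is only a permutation of indices, so the naive identification $\alpha_i\mapsto\beta_i$ is merely $\Q$-linear and need not respect multiplication. The decisive move is to recognize the $\lambda_\ell$ as the conjugates $\rho_\ell(\kappa)$ of a single $\kappa\in K_2^\times$, obtained by evaluating the relations at $1\in K_1$; this is precisely what upgrades the linear identification to the algebra isomorphism $\iota_2^{-1}\circ\iota_1$. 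Two further points need attention in the \'etale setting, namely that $\iota_1,\iota_2$ are injective with $n$-dimensional image (so that $\iota_2^{-1}$ is defined on $\iota_1(K_1)$), and that the relabelling absorbing the permutation genuinely permutes the full set of homomorphisms $\rho_\ell$ rather than mixing the coordinates $X_i$.
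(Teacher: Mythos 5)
Your proof is correct in both directions, and it takes a route that differs from the paper's in its execution, so a comparison is worth recording. The paper's proof, after the same first move of absorbing $U$ into a choice of basis, does \emph{not} factor the forms all the way into linear forms at the outset: it first decomposes each \'etale algebra into its number-field factors $K^{(i)}=\prod_j K_j^{(i)}$, writes each norm form as a product of the norm forms attached to the field factors, uses the fact that the coefficients span each factor to conclude that the two multisets of field factors coincide, and only then matches linear factors within each field factor; this produces, for each $j$, a constant $\kappa_j\in K_j^\times$ (recognized as a norm by specializing the variables to rational values) and an automorphism $\sigma_j$ of $K_j$, and the final data are assembled as $\kappa=\prod_j\kappa_j$, $\sigma=\prod_j\sigma_j$. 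You instead work globally over $\overline{\Q}$: all $n$ linear factors are matched at once by unique factorization, the scalars $\lambda_\ell$ are recognized as the conjugates $\rho_\ell(\kappa)$ of a single unit $\kappa\in K_2^\times$ by evaluating the relations at $1\in K_1$, and the isomorphism is realized as $\iota_2^{-1}\circ\iota_1$, which exists because the two diagonal embeddings into $\overline{\Q}^n$ are injective with equal image. This buys a uniform treatment of precisely the point you flagged as delicate---upgrading the scalar matching of factors to a $\Q$-algebra isomorphism---which in the paper is distributed across the multiset identification of field factors and the (tersely justified) claim that each $\sigma_j$ is an automorphism, rather than merely an embedding, of $K_j$; it also eliminates the per-factor bookkeeping. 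What the paper's decomposition buys is that the role of the field factors and the norm-theoretic nature of the proportionality constants are made explicit. Finally, note that the paper proves only the forward implication and leaves the converse to the reader; your converse, including the denominator identity $N_{\OO_1}(I_1)=\bigl|N_{\Q}^{K_2}(\kappa)\bigr|\,N_{\OO_2}(I_2)$ coming from $\varphi(\OO_1)=\OO_2$, correctly supplies that missing half.
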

\begin{proof}
We prove only the first part of the statement; the second is left to the reader. Choose a $\Z$-basis $\langle\alpha_1^{(i)}, \dots, \alpha_n^{(i)}\rangle$ of $I_i$ for each $i$ so that, with respect to these bases, we have
$$N_{I_1,\OO_1}(X_1, \dots, X_n) = \pm N_{I_2,\OO_2}(X_1, \dots, X_n).$$
For each $i \in \{1,2\}$, write $K^{(i)} = \prod_{j = 1}^{m_i} K_j^{(i)}$, where $K_j^{(i)}$ is a number field for each $j$, and let $\pi_j^{(i)} \colon K^{(i)} \to K_j^{(i)}$ denote the projection map onto the $j^{\mathrm{th}}$ factor. Then, using the symbol ``$\propto$'' to denote ``equal up to multiplication by an element of $\Q^\times$,'' we find that
\begin{equation} \label{eq-twoways}
\prod_{j = 1}^{m_1} N_{\Q}^{K_j^{(1)}}\big(\pi_j^{(1)}(\alpha_1^{(1)})X_1+\cdots+\pi_j^{(1)}(\alpha_n^{(1)})X_n\big) \propto \prod_{j = 1}^{m_2} N_{\Q}^{K_j^{(2)}}\big(\pi_j^{(2)}(\alpha_1^{(2)})X_1+\cdots+\pi_j^{(2)}(\alpha_n^{(2)})X_n\big).
\end{equation}
\normalsize
Since $\alpha_1^{(i)}, \dots, \alpha_n^{(i)}$ is a $\Q$-basis of $K_i$, we see that $\pi_j^{(i)}(\alpha_1^{(i)}), \dots, \pi_j^{(i)}(\alpha_n^{(i)})$ is a $\Q$-spanning set of $K_j^{(i)}$ for each $j \in \{1, \dots, m_i\}$. Consequently, we have an equality $\{K_j^{(1)} : j \in \{1, \dots, m_1\}\} = \{K_j^{(2)} : j \in \{1, \dots, m_2\}\}$ of multisets, and so we can take $K^{(1)} = K^{(2)} \eqdef K$, $m_1 = m_2 \eqdef m$, $K_j^{(1)} = K_j^{(2)} \eqdef K_j$, and $\pi_j^{(1)} = \pi_j^{(2)} \eqdef \pi_j$.

Now, by permuting isomorphic factors among the fields $K_1,\dots, K_m$ if necessary, we have for each $j$ that
\begin{equation} \label{eq-isolateKi}
N_{\Q}^{K_j}\big(\pi_j(\alpha_1^{(1)})X_1+\cdots+\pi_j(\alpha_n^{(1)})X_n\big) \propto N_{\Q}^{K_j}\big(\pi_j(\alpha_1^{(2)})X_1+\cdots+\pi_j(\alpha_n^{(2)})X_n\big).
\end{equation}
Note that the constant of proportionality in~\eqref{eq-isolateKi} must be a norm from $K_j$, as can be seen by specializing $X_1, \dots, X_n$ to values in $\Q$. Then, since $\pi_j(\alpha_1^{(1)})X_1 + \cdots + \pi_j(\alpha_n^{(1)})X_n$ and $\pi_j(\alpha_1^{(2)})X_1 + \cdots + \pi_j(\alpha_n^{(2)})X_n$ respectively divide the left- and right-hand sides of~\eqref{eq-isolateKi} (as polynomials over $K_j$),
there must be some $\kappa_j \in K_j^\times$
and some automorphism $\sigma_j$ of $K_j$ such that $\pi_j(\alpha_k^{(1)}) = \kappa_j \sigma_j(\pi_j(\alpha_k^{(2)}))$ for each $j=1,\ldots, m$, $k=1,\ldots ,n$. Taking $\kappa = \prod_{j = 1}^m \kappa_j$ and $\sigma = \prod_{j = 1}^m \sigma_j$, we deduce that $\alpha_k^{(1)} = \kappa \sigma(\alpha_k^{(2)})$ for each $k$. Thus, $I_1 = \kappa \sigma(I_2)$, as desired.
\end{proof}

%
%

\noindent
Our next theorem states that when $f \in \Z[X]$ is a polynomial of degree $n$, the form $[f]$, as defined in~\eqref{hdef}, may be interpreted as a norm form of a fractional ideal, namely, $I_f(n-1)$.
\begin{thm}\label{normformthm}
Let $f \in \Z[X]$ be a polynomial of degree $n$. Then $[f]$ is, up to sign, the norm form of $I_f(n-1)$ with respect to $R_f$ and the power basis $(\ref{eq-zetadef})$.
\end{thm}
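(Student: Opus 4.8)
The plan is to compute the norm form of $I_f(n-1)$ with respect to $R_f$ straight from its definition and to match the result factor-by-factor with the product~\eqref{hdef} defining $[f]$. First I would assemble the two ingredients supplied by earlier results. Specializing~\eqref{eq-zetadef} to $k=n-1$ shows that $I_f(n-1)$ has the power basis $\langle 1,\alpha,\ldots,\alpha^{n-1}\rangle$, since none of the $\zeta_i$ occur in this case; and Theorem~\ref{thm-everything}(iv) supplies the normalizing constant $N_{R_f}(I_f(n-1))=|f_0|^{-(n-1)}$. Reading the power basis in the order $\langle \alpha^{n-1},\alpha^{n-2},\ldots,\alpha,1\rangle$, so as to align the variables with~\eqref{hdef}, the definition of the norm form reads
\[
N_{I_f(n-1),R_f}(\uX)=|f_0|^{n-1}\,N^{K_f}_{\Q}\!\left(\alpha^{n-1}X_1+\alpha^{n-2}X_2+\cdots+X_n\right).
\]

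Next I would expand this norm using the \'etale structure of $K_f$ recalled just before Lemma~\ref{lem-discs}. Because $D(f)\neq 0$, the algebra $K_f=\Q[X]/(f)$ is \'etale and its $n$ embeddings into $\overline{\Q}$ carry $\alpha$ to the $n$ distinct roots $\alpha_1,\ldots,\alpha_n$ of $f$; applying the product-over-embeddings formula for $N^{K_f}_{\Q}$ to the linear form above gives
\[
N^{K_f}_{\Q}\!\left(\alpha^{n-1}X_1+\cdots+X_n\right)=\prod_{i=1}^n\left(\alpha_i^{n-1}X_1+\alpha_i^{n-2}X_2+\cdots+X_n\right).
\]
Substituting this back and comparing with~\eqref{hdef} yields
\[
N_{I_f(n-1),R_f}(\uX)=|f_0|^{n-1}\prod_{i=1}^n\left(\alpha_i^{n-1}X_1+\cdots+X_n\right)=\frac{|f_0|^{n-1}}{f_0^{n-1}}\,[f](\uX)=\pm[f](\uX),
\]
which is the asserted equality up to sign, the sign being $(-1)^{n-1}$ exactly when $f_0<0$.

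The computation is routine; the two points deserving attention are the appearance of the factor $|f_0|^{n-1}$ and the ordering of the basis. The factor $|f_0|^{n-1}$ is precisely the content of Theorem~\ref{thm-everything}(iv), and it is what makes the non-monic case work: it reconciles the norm form—which is manifestly a product over the roots $\alpha_i$ with leading coefficient $1$—with Hermite's form $[f]$, which carries the explicit prefactor $f_0^{n-1}$; without it the two would differ by $f_0^{n-1}$. As for the ordering, a norm form depends on the chosen $\Z$-basis only up to $\GL_n(\Z)$-equivalence, so reading the power basis in decreasing rather than increasing order of powers—which is what pairs the $X_j$-coefficient $\alpha_i^{n-j}$ of $[f]$ with that of the norm form—is harmless and does not affect the equivalence class. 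This, together with the ratio $|f_0|^{n-1}/f_0^{n-1}=\pm1$, is exactly why the conclusion is stated only up to sign.
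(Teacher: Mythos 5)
Your proof is correct and is essentially the paper's own argument, just written out in full: the paper's proof likewise consists of observing that $I_f(n-1)$ has $\Z$-basis $\langle 1,\alpha,\ldots,\alpha^{n-1}\rangle$ and that $N_{R_f}(I_f(n-1))=|f_0|^{1-n}$ by Theorem~\ref{thm-everything}(iv), and then comparing with the definition~\eqref{hdef} of $[f]$. Your additional remarks on the basis ordering and on the sign $|f_0|^{n-1}/f_0^{n-1}=\pm 1$ are accurate elaborations of details the paper leaves implicit.
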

\begin{proof}
This follows from the definition of $[f]$ upon noting that $I_f(n-1)$ has $\Z$-basis $\langle 1, \alpha, \dots, \alpha^{n-1}\rangle$ and that the norm of $I_f(n-1)$ with respect to $R_f$ is equal to $|f_0|^{1-n}$.
\end{proof}

\begin{remark}
Theorem~\ref{cor-discs} may also be proven using Theorem~\ref{normformthm}. When $D(f) \neq 0$ and $f$ is primitive, combining Theorem~\ref{normformthm} with Lemma~\ref{lem-discs} and Theorem~\ref{thm-everything}(iii)--(iv) yields that $D([f]) = D(N_{I_f(n-1),R_f}) = D(R_f) = D(f)$. To include the case $D(f)=0$ and/or $f$ imprimitive, we observe that $D([f]) - D(f)$, viewed as a polynomial in the coefficients of $f$,
must be identically zero since it vanishes already if $D(f) \neq 0$ and $f$ is primitive.
\end{remark}

\subsection{Interpretation of Hermite equivalence in terms of invariant orders and ideals}

We may now prove the following necessary and sufficient criterion for Hermite equivalence:
\begin{thm} \label{thm-equivalence}
Let $f,g \in \Z[X]$ be polynomials of degree $n$ and nonzero discriminant.
If $f$ and $g$ are Hermite equivalent,
then there is a $\Q$-algebra isomorphism from $K_f$ to $K_g$ that maps  $I_f(n-1)$ to $\kappa I_g(n-1)$ for some $\kappa \in K_g^\times$, and any such isomorphism maps $R_f$ to $R_g$.

Conversely, if there is a $\Q$-algebra isomorphism from $K_f$ to $K_g$ that maps $R_f$ to $R_g$
and $I_f(n-1)$ to $\kappa I_g(n-1)$ for some $\kappa \in K_g^\times$, then $f$ and $g$ are Hermite equivalent.
\end{thm}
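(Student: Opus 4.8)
The plan is to reduce both directions to the norm-form dictionary already assembled, namely Theorem~\ref{normformthm} together with Theorem~\ref{gennormformthm}. By definition $f$ and $g$ are Hermite equivalent exactly when $[f]$ and $[g]$ are $\GL_n(\Z)$-equivalent, and Theorem~\ref{normformthm} identifies $[f]$, up to sign, with the norm form $N_{I_f(n-1),R_f}$ taken with respect to the power basis $\langle 1,\alpha,\dots,\alpha^{n-1}\rangle$, and similarly for $g$. Since the sign is absorbed into $\GL_n(\Z)$-equivalence, Hermite equivalence of $f$ and $g$ is the same as $\GL_n(\Z)$-equivalence of $N_{I_f(n-1),R_f}$ and $N_{I_g(n-1),R_g}$. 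The hypotheses of Theorem~\ref{gennormformthm} are met, since $D(f),D(g)\neq 0$ makes $K_f,K_g$ \'etale of degree $n$, the rings $R_f,R_g$ are orders in these algebras by Theorem~\ref{thm-everything}(i), and $I_f(n-1),I_g(n-1)$ are fractional ideals of $\Z$-rank $n$ by Theorem~\ref{thm-everything}(iii).

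With this identification in hand, the converse direction is immediate: from a $\Q$-algebra isomorphism $K_f\to K_g$ carrying $R_f$ to $R_g$ and $I_f(n-1)$ to $\kappa I_g(n-1)$, the second part of Theorem~\ref{gennormformthm} gives $\GL_n(\Z)$-equivalence of the two norm forms, hence of $[f]$ and $[g]$, i.e.\ Hermite equivalence. Likewise the existence assertion in the forward direction---that Hermite equivalence produces \emph{some} isomorphism $K_f\to K_g$ sending $I_f(n-1)$ to $\kappa I_g(n-1)$---is precisely the first part of Theorem~\ref{gennormformthm}, after inverting the isomorphism if the stated direction requires it.

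The one point demanding genuine argument is that \emph{any} isomorphism $\psi\colon K_f\to K_g$ with $\psi(I_f(n-1))=\kappa I_g(n-1)$ must send $R_f$ to $R_g$, as this is not furnished by Theorem~\ref{gennormformthm} alone. The key is that the multiplier ring $\{\xi\in K:\xi I\subseteq I\}$ of a fractional ideal is intrinsic: it is unchanged by the scaling $I\mapsto\kappa I$, and a $\Q$-algebra isomorphism transports the multiplier ring of $I_f(n-1)$ onto that of its image $\kappa I_g(n-1)$. Hence $\psi$ carries the multiplier ring of $I_f(n-1)$ onto that of $I_g(n-1)$, and by Theorem~\ref{thm-everything}(vi) these are $R_{f'}$ and $R_{g'}$, where $f',g'$ denote the primitive parts of $f,g$; so $\psi(R_{f'})=R_{g'}$. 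To pass from $R_{f'}$ back to $R_f$, I would compare the bases in~\eqref{eq-zetadef}: writing $f=c_ff'$ gives $\zeta_i=c_f\zeta_i'$ and hence the bookkeeping identity $R_f=\Z+c_fR_{f'}$ (and similarly for $g$). Since $f,g$ are Hermite equivalent, Theorem~\ref{thm-prec} and the $\GL_n(\Z)$-invariance of content force $c_f^{n-1}=c_g^{n-1}$, so $c_f=c_g$; as $\psi$ fixes $\Z$ pointwise, I conclude $\psi(R_f)=\Z+c_f\psi(R_{f'})=\Z+c_gR_{g'}=R_g$. The main obstacle here is conceptual rather than computational: for imprimitive $f$ the invariant order $R_f$ is \emph{not} the multiplier ring of $I_f(n-1)$---that ring is $R_{f'}$---so one must track the content separately to recover $R_f$, a subtlety that disappears when $f$ is primitive.
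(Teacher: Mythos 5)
Your proposal is correct and follows essentially the same route as the paper's proof: both directions are reduced to Theorems~\ref{normformthm} and~\ref{gennormformthm}, and the statement that any qualifying isomorphism carries $R_f$ to $R_g$ is obtained exactly as in the paper, via the multiplier rings $R_{f'}$, $R_{g'}$ from Theorem~\ref{thm-everything}(vi), the content comparison $c_f=\pm c_g$ from Theorem~\ref{thm-prec}, and the identity $R_f=\Z+c_fR_{f'}$. Your explicit remark that scaling $I\mapsto\kappa I$ leaves the multiplier ring unchanged, and that $R_f\neq R_{f'}$ for imprimitive $f$, is precisely the subtlety the paper's proof is navigating.
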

\begin{proof}
Assume that $f,g$ are Hermite equivalent and write $f=c_ff'$, $g=c_gg'$,
where $f',g'\in\Z [X]$ are primitive polynomials and $c_f,c_g$ positive integers.
Theorems~\ref{gennormformthm} and \ref{normformthm} imply that there is a
$\Q$-algebra isomorphism from $K_f$ to $K_g$ that maps $I_f(n-1)$ to $\kappa I_g(n-1)$ for some $\kappa \in K_g^\times$.
By Theorem~\ref{thm-everything}(vi), the endomorphism rings of $I_f(n-1)$ and $I_g(n-1)$ are respectively isomorphic to $R_{f'}$ and $R_{g'}$. It follows that the isomorphism $K_f \overset{\sim}\longrightarrow K_g$ restricts to an isomorphism $R_{f'} \overset{\sim}\longrightarrow R_{g'}$.

Now Theorem~\ref{thm-prec} implies that
$c_f = c_g$.
Thus, since $R_f = \Z + c_f R_{f'}$ and $R_g = \Z + c_g R_{g'}$, the isomorphism $R_{f'} \overset{\sim}\longrightarrow R_{g'}$ restricts to an isomorphism $R_f \overset{\sim}\longrightarrow R_g$.

The second statement follows directly from Theorems~\ref{gennormformthm} and \ref{normformthm}.
\end{proof}

\noindent We have the following pithy rephrasing of Theorem~\ref{thm-equivalence} in terms of ideal classes of invariant orders:

\begin{cor} \label{cor-ringeq}
Let $f,g \in \Z[X]$ be polynomials of degree $n$ and nonzero discriminant. Then $f$ and $g$ are Hermite equivalent if and only if their invariant orders $R_f$ and $R_g$ are isomorphic, and under such an isomorphism, $I_f(n-1)$ and $I_g(n-1)$ belong to the same ideal class.
\end{cor}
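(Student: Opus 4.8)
The plan is to deduce the corollary directly from Theorem~\ref{thm-equivalence}, which already contains all the substantive content; the remaining task is purely to translate that statement into the language of isomorphism classes of orders and ideal classes. First I would record two standard facts. Recall that two fractional ideals $I,J$ of an order $\OO$ in an \'etale $\Q$-algebra $K$ lie in the same ideal class precisely when $I=\kappa J$ for some $\kappa\in K^\times$. Moreover, any ring isomorphism $\psi\colon R_f\to R_g$ of orders extends uniquely to a $\Q$-algebra isomorphism $\widetilde{\psi}\colon K_f=R_f\otimes_\Z\Q\to R_g\otimes_\Z\Q=K_g$, and conversely, a $\Q$-algebra isomorphism $K_f\to K_g$ carrying $R_f$ onto $R_g$ restricts to an isomorphism of the orders. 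This extension/restriction correspondence is what lets one pass freely between the two formulations.

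For the forward direction, I would assume that $f$ and $g$ are Hermite equivalent. Theorem~\ref{thm-equivalence} then produces a $\Q$-algebra isomorphism $\varphi\colon K_f\to K_g$ with $\varphi(I_f(n-1))=\kappa I_g(n-1)$ for some $\kappa\in K_g^\times$, and further guarantees that $\varphi$ carries $R_f$ onto $R_g$. Hence $\varphi$ restricts to an isomorphism $R_f\cong R_g$, and under this isomorphism the image $\varphi(I_f(n-1))=\kappa I_g(n-1)$ lies in the same ideal class of $R_g$ as $I_g(n-1)$, which is exactly the assertion of the corollary. For the converse, I would start from an isomorphism $\psi\colon R_f\to R_g$ under which $I_f(n-1)$ and $I_g(n-1)$ belong to the same ideal class. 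Extending $\psi$ to $\widetilde{\psi}\colon K_f\to K_g$, the hypothesis becomes $\widetilde{\psi}(I_f(n-1))=\kappa I_g(n-1)$ for some $\kappa\in K_g^\times$, and by construction $\widetilde{\psi}$ maps $R_f$ to $R_g$. These are precisely the hypotheses of the converse half of Theorem~\ref{thm-equivalence}, so $f$ and $g$ are Hermite equivalent.

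The only point requiring any care—and it is a minor one—is matching the quantifiers in the two statements: Theorem~\ref{thm-equivalence} phrases its forward direction as ``there is an isomorphism identifying the ideals, and \emph{any} such isomorphism respects the orders,'' whereas the corollary asserts the existence of a \emph{single} isomorphism simultaneously matching the orders and placing the ideals in the same class. Confirming that these two phrasings coincide is the entire content of the reduction, and it follows immediately from the correspondence between order isomorphisms and $\Q$-algebra isomorphisms noted above. No further computation is involved, so I expect no genuine obstacle beyond this bookkeeping.
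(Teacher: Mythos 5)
Your proposal is correct and matches the paper's treatment: the paper states this corollary as an immediate rephrasing of Theorem~\ref{thm-equivalence} with no separate proof, and your bookkeeping---passing between ring isomorphisms of the orders and $\Q$-algebra isomorphisms via $K_f = R_f \otimes_\Z \Q$, and identifying ``same ideal class'' with ``differ by a factor $\kappa \in K_g^\times$''---is exactly the intended reduction. The quantifier point you flag is also handled correctly: the forward half of Theorem~\ref{thm-equivalence} produces one isomorphism matching the ideals, and the ``any such isomorphism'' clause guarantees that this same isomorphism carries $R_f$ onto $R_g$.
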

\noindent Retain the setting of Corollary~\ref{cor-ringeq}, and suppose further that $f$ and $g$ are primitive. Applying part~(v) of Theorem~\ref{thm-everything}, we see that the ideal classes of $I_f^{n-2}$ and $I_f^{n-1}$ are carried under this isomorphism to those of $I_g^{n-2}$ and $I_g^{n-1}$, respectively; hence this isomorphism carries the class of $I_f$ to that of $I_g$. Thus, for primitive polynomials, we obtain the following variant of Corollary~\ref{cor-ringeq}:
\begin{cor} \label{cor-ringeqprim}
Let $f,g \in \Z[X]$ be primitive polynomials of nonzero discriminant. Then $f$ and $g$ are Hermite equivalent if and only if their invariant orders $R_f$ and $R_g$ are isomorphic, and under such an isomorphism, $I_f$ and $I_g$ belong to the same ideal class.
\end{cor}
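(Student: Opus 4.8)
The plan is to deduce this corollary from Corollary~\ref{cor-ringeq} by trading the stated condition on the $(n-1)^{\text{st}}$ powers $I_f(n-1)$ and $I_g(n-1)$ for the cleaner condition on $I_f$ and $I_g$ themselves. The structural reason this is possible is that, since $f$ and $g$ are primitive, Theorem~\ref{thm-everything}(iii) guarantees that every fractional ideal $I_f(k) = I_f^k$ and $I_g(k) = I_g^k$ is invertible (here I use the identification $I_f(k) = I_f^k$ from Theorem~\ref{thm-everything}(iv)). Thus all the relevant ideal classes lie in the honest ideal class group of the respective invariant order, where they may be multiplied and inverted freely.

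For the forward direction, I would begin by assuming that $f$ and $g$ are Hermite equivalent. Corollary~\ref{cor-ringeq} then supplies an isomorphism $\varphi\colon R_f \xrightarrow{\sim} R_g$ under which the class of $I_f^{n-1}$ is carried to that of $I_g^{n-1}$. The key additional input is Theorem~\ref{thm-everything}(v): the class of $I_f^{n-2} = I_f(n-2)$ is precisely the class of the dualizing module (inverse different) of $R_f$, and likewise for $g$. Because the dualizing module is an intrinsic invariant of the ring alone, the isomorphism $\varphi$ must send the class of the dualizing module of $R_f$ to that of $R_g$; that is, $\varphi$ carries the class of $I_f^{n-2}$ to the class of $I_g^{n-2}$. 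Multiplying the relation $\varphi_*[I_f^{n-1}] = [I_g^{n-1}]$ by the inverse of $\varphi_*[I_f^{n-2}] = [I_g^{n-2}]$ in the class group then yields $\varphi_*[I_f] = [I_g]$, as desired.

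For the converse, I would assume that $\varphi\colon R_f \xrightarrow{\sim} R_g$ carries the class of $I_f$ to that of $I_g$. Raising to the $(n-1)^{\text{st}}$ power in the class group gives $\varphi_*[I_f^{n-1}] = [I_g^{n-1}]$, and Corollary~\ref{cor-ringeq} immediately returns Hermite equivalence.

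I do not anticipate any serious obstacle: the argument is essentially bookkeeping in the ideal class group, and the one genuinely nontrivial ingredient---that the class of $I_f^{n-2}$ is the intrinsic dualizing class, and hence is automatically respected by any ring isomorphism---is already provided by Theorem~\ref{thm-everything}(v). The only point demanding a moment's care is the legitimacy of dividing classes, which is exactly where the primitivity hypothesis enters, via the invertibility supplied by Theorem~\ref{thm-everything}(iii).
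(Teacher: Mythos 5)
Your proof is correct and follows essentially the same route as the paper: it too deduces the statement from Corollary~\ref{cor-ringeq} by invoking Theorem~\ref{thm-everything}(v) to see that any isomorphism $R_f \xrightarrow{\sim} R_g$ carries the class of $I_f^{n-2}$ (the intrinsic dualizing class) to that of $I_g^{n-2}$, and then cancels this against the relation on the $(n-1)^{\text{st}}$ powers to obtain $[I_f]\mapsto[I_g]$, with the converse handled by raising to the $(n-1)^{\text{st}}$ power. Your explicit observation that primitivity, via Theorem~\ref{thm-everything}(iii), supplies the invertibility needed to multiply and invert classes is left implicit in the paper but is exactly the right justification for that step.
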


\begin{remark} \label{rmk-2}
When $n = 2$, Theorem~\ref{thm-equivalence} is well-known. Indeed, if $f$ is a binary quadratic form, then $[f](\underline{X}) = f(X_2,-X_1)$, and so Hermite equivalence and $\on{GL}_2(\Z)$-equivalence are the same notion; since it is known (by the ideal class interpretation of Gauss composition) that $\GL_2(\Z)$-classes of integral binary quadratic forms $f$ are in bijection with isomorphism classes of pairs $(R,I)$, where $R$ is the invariant order of $f$ and $I$ is the invariant ideal of $f$, the result follows.
\end{remark}

\begin{remark} \label{rmk-3}
When $n = 3$, the condition that the invariant ideals $I_f$ and $I_g$ lie in the same ideal class can be dropped: by the Levi--Delone--Faddeev correspondence~\cite{MR0160744}, binary cubic forms define isomorphic rings if and only if they are $\on{GL}_2(\Z)$-equivalent.  In \S\ref{sec-needed}, we will show that, in contrast to the case $n = 3$, the condition in Theorem~\ref{thm-equivalence} that the invariant ideals $I_f$ and $I_g$ lie in the same ideal class {cannot} be dropped when $n>3$.
\end{remark}

\subsection{Consequences for Hermite equivalence}

In this subsection, we present several corollaries of Theorem~\ref{thm-equivalence} concerning Hermite equivalence.
First, by dropping the condition on the invariant ideals in Theorem~\ref{thm-equivalence}, we obtain the following consequence:
\begin{cor}\label{cor3.2}
Let $f,g \in \Z[X]$ be Hermite equivalent polynomials of nonzero discriminant.
Then their invariant orders $R_f$ and $R_g$ are isomorphic.
\end{cor}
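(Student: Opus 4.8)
The plan is to read the statement off directly from the forward implication of Theorem~\ref{thm-equivalence}, which already does all the work; the corollary is obtained simply by discarding the finer information about invariant ideals. Since $f$ and $g$ have nonzero discriminant, $K_f$ and $K_g$ are \'etale $\Q$-algebras of degree $n$ and the invariant orders $R_f, R_g$ are genuine orders in them, so Theorem~\ref{thm-equivalence} applies verbatim.

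First I would invoke the hypothesis that $f$ and $g$ are Hermite equivalent and apply the first assertion of Theorem~\ref{thm-equivalence}. This produces a $\Q$-algebra isomorphism $\varphi \colon K_f \to K_g$ carrying $I_f(n-1)$ to $\kappa\, I_g(n-1)$ for some $\kappa \in K_g^\times$, together with the accompanying claim that any such $\varphi$ necessarily maps $R_f$ onto $R_g$. I would then retain only the conclusion $\varphi(R_f) = R_g$, forgetting the condition on the invariant ideals entirely.

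Finally, since $\varphi$ is a $\Q$-algebra isomorphism whose restriction to $R_f$ is a bijective ring homomorphism onto $R_g$, we conclude that $R_f \cong R_g$ as rings, which is exactly the assertion of the corollary.

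There is no real obstacle here: the substance of the argument has already been carried out in the proof of Theorem~\ref{thm-equivalence}, which in turn rests on the norm-form interpretation of $[f]$ (Theorem~\ref{normformthm}) and the endomorphism-ring characterization of the invariant order (Theorem~\ref{thm-everything}(vi)). The corollary is a strict weakening of that theorem, keeping the ring isomorphism and dropping the ideal-class matching, so no additional computation is needed.
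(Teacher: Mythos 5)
Your proposal is correct and matches the paper exactly: the paper gives no separate proof of Corollary~\ref{cor3.2}, stating only that it is obtained ``by dropping the condition on the invariant ideals in Theorem~\ref{thm-equivalence},'' which is precisely your argument of invoking the forward implication of that theorem and retaining only the conclusion that the isomorphism $K_f \to K_g$ carries $R_f$ onto $R_g$.
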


\noindent The converse of Corollary~\ref{cor3.2} holds in certain special situations. For example, when $f$ and $g$ are both monic, their invariant ideals $I_f$ and $I_g$ are both principal (in fact, as stated in Remark~\ref{rmk-princip}, they are both equal to the unit ideal), and applying Theorem~\ref{thm-equivalence} yields the following consequence:

\begin{cor}\label{cor3.4}
Let $f,g\in \Z[X]$ be monic polynomials of nonzero discriminant. Then $f$ and $g$ are Hermite equivalent if and only if their invariant orders $R_f$ and $R_g$ are isomorphic.
\end{cor}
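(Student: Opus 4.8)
The plan is to obtain this as an immediate specialization of the general criterion already established, namely Theorem~\ref{thm-equivalence} (equivalently, Corollary~\ref{cor-ringeq}), exploiting the fact that for \emph{monic} polynomials the invariant ideals are forced to be principal, so that the ideal-class condition appearing in the general statement becomes vacuous. Thus essentially no new work is required beyond correctly invoking the prior results; the role of this corollary is to record the clean form the criterion takes in the principal case.

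For the forward implication there is nothing to prove: if $f$ and $g$ are Hermite equivalent, then Corollary~\ref{cor3.2} already yields that $R_f$ and $R_g$ are isomorphic. For the converse, I would first invoke Remark~\ref{rmk-princip}: since $f$ is monic (so $f_0 = 1$), the invariant order is the monogenic ring $R_f = \Z[\alpha]$, and every $I_f(k)$ equals the unit ideal; in particular $I_f(n-1) = R_f$, and likewise $I_g(n-1) = R_g$. Now suppose we are handed an isomorphism of invariant orders $R_f \xrightarrow{\sim} R_g$. Since $K_f = R_f \otimes_\Z \Q$ and $K_g = R_g \otimes_\Z \Q$, and since $R_f, R_g$ are full-rank orders, this ring isomorphism extends uniquely to a $\Q$-algebra isomorphism $\varphi \colon K_f \xrightarrow{\sim} K_g$ carrying $R_f$ onto $R_g$. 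Because $I_f(n-1) = R_f$ and $I_g(n-1) = R_g$, we then have $\varphi(I_f(n-1)) = \varphi(R_f) = R_g = I_g(n-1)$, so the hypothesis of the converse half of Theorem~\ref{thm-equivalence} is satisfied with $\kappa = 1$. Hence $f$ and $g$ are Hermite equivalent, completing the proof.

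I do not expect any genuine obstacle here: the corollary is a direct reading-off of Theorem~\ref{thm-equivalence} once one observes, via Remark~\ref{rmk-princip}, that monicity makes $I_f(n-1)$ and $I_g(n-1)$ the unit ideals. The only point deserving a moment's care is the passage from an abstract isomorphism of the rings $R_f$ and $R_g$ to a $\Q$-algebra isomorphism of the ambient \'etale algebras $K_f$ and $K_g$, but this is automatic since each $K$ is the $\Q$-span of its order. Alternatively, one can bypass even this and quote Corollary~\ref{cor-ringeq} verbatim, noting that the requirement that $I_f(n-1)$ and $I_g(n-1)$ lie in the same ideal class holds trivially when both are principal.
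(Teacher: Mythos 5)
Your proposal is correct and follows essentially the same route as the paper: the paper likewise derives Corollary~\ref{cor3.4} by observing (via Remark~\ref{rmk-princip}) that monicity forces $I_f(n-1)$ and $I_g(n-1)$ to be the unit ideal, so the ideal-class condition in Theorem~\ref{thm-equivalence} (equivalently Corollary~\ref{cor-ringeq}) is automatic. Your only additional content---checking that a ring isomorphism $R_f \overset{\sim}{\to} R_g$ extends to a $\Q$-algebra isomorphism $K_f \overset{\sim}{\to} K_g$---is a correct elaboration of a step the paper leaves implicit.
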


\begin{remark} \label{rmk-7}
Recall that the map sending the $\Z$-equivalence class of a monic quadratic polynomial $f \in \Z[X]$ to the unique quadratic ring with discriminant $D(f)$ is a bijection. Combining this fact with Corollary~\ref{cor3.4}, we see that two monic quadratic polynomials $f,g \in \Z[x]$ are Hermite equivalent if and only if they are $\Z$-equivalent.
\end{remark}

\noindent From the discussion in Remarks~\ref{rmk-2}--\ref{rmk-3}, we see that the converse to Corollary~\ref{cor3.2} does \emph{not necessarily} hold when $n = 2$, but that it \emph{does} hold when $n = 3$. In light of this, we pose the following question concerning the converse of Corollary~\ref{cor3.2}:
\begin{que}\label{que3.3}
Let $n \geq 4$. Do there exist $($reducible or irreducible$)$ polynomials $f,g \in \Z[X]$ of degree $n$ and nonzero discriminant that have isomorphic invariant orders but are not Hermite equivalent, and if so, can such polynomials be exhibited?
\end{que}
\noindent Corollary~\ref{cor3.4} implies that, in Question~\ref{que3.3}, if such polynomials $f,g$ exist they necessarily have to be non-monic. See \S\ref{sec-needed} for an explicit example in the quartic case.

\begin{remark} \label{rmk-disc0}
Theorem~\ref{Prop2.1} can also be proven using Theorem~\ref{thm-equivalence}. Indeed, observe that by Theorem~\ref{thm-everything}(vii), if $f$ and $g$ are $\on{GL}_2(\Z)$-equivalent, then the invariant orders $R_f$ and $R_g$ are naturally isomorphic and the invariant ideals $I_f$ and $I_g$ lie in the same ideal class under this isomorphism. By Theorem~\ref{thm-equivalence}, we conclude that if $f$ and $g$ have nonzero discriminant and are $\on{GL}_2(\Z)$-equivalent, then they are Hermite equivalent. In particular, if $f$ and $g$ are monic and $\Z$-equivalent, then they are Hermite equivalent.
\end{remark}

\subsection{Necessity of the condition on invariant ideals  when $n=4$} \label{sec-needed}

In this section, we give an answer to Question~\ref{que3.3} by showing that, in contrast to the case $n = 3$, the condition in Theorem~\ref{thm-equivalence} that the invariant ideals $I_f$ and $I_g$ lie in the same ideal class {cannot necessarily} be dropped when $n>3$.
Specifically, we consider the case $n=4$, and show the existence of two quartic polynomials $f,g \in \Z[X]$ such that $R_f$ is isomorphic to $R_g$ but $I_f$ and $I_g$ do not lie in the same ideal class under any isomorphism between $R_f$ and $R_g$.  We prove the following theorem, which implies Theorem~\ref{thm-main2}(i):


\begin{thm} \label{thm-example}
Let
$$f(X) = 4X^4 - X^3 - 62X^2 + 13X + 255 \quad \text{and} \quad g(X) = 5X^4 - X^3 - 2X^2 - 7X - 6.$$
Then $R_f$ and $R_g$ are isomorphic, but there is no isomorphism between $R_f$ and $R_g$ under which the ideal classes of $I_f$ and $I_g$ are identified. In fact, $I_f$ is principal, whereas $I_g$ is not.
\end{thm}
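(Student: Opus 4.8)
The plan is to prove the three assertions---that $R_f\cong R_g$, that there is no isomorphism matching the classes of $I_f$ and $I_g$, and that $I_f$ is principal while $I_g$ is not---working directly with the invariant orders and ideals. The first observation is that the non-principality statement subsumes the middle one: any ring isomorphism carries principal fractional ideals to principal fractional ideals, so if $I_f$ is principal and $I_g$ is not, then under no isomorphism $R_f\cong R_g$ can the class of $I_f$ be matched with that of $I_g$. It therefore suffices to show that $R_f\cong R_g$, that $I_f$ is principal, and that $I_g$ is not.

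First I would pin down the orders. A direct computation shows that $f$ and $g$ are primitive and separable and that $D(f)=D(g)=:D$ is squarefree; by Theorem~\ref{thm-everything}(ii) this gives $D(R_f)=D(R_g)=D$. Since $D(R)=[\OO_K:R]^2\,D(\OO_K)$ for any order $R$ in an \'etale $\Q$-algebra $K$, a squarefree discriminant forces $[\OO_K:R]=1$; hence $R_f=\OO_{K_f}$ and $R_g=\OO_{K_g}$ are the maximal orders, and their ideal-class theory is just the class group of a Dedekind domain. A ring isomorphism $R_f\cong R_g$ is then the same as a $\Q$-algebra isomorphism $K_f\cong K_g$, which I would exhibit explicitly: passing to the monic minimal polynomials $Y^4-Y^3-248Y^2+208Y+16320$ of $f_0\alpha_f$ and $Y^4-Y^3-10Y^2-175Y-750$ of $g_0\alpha_g$, I would display an element of $K_f$ whose minimal polynomial is the latter and check that it generates $K_f$, producing the desired isomorphism (since equality of field discriminants alone does not guarantee isomorphism).

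For the ideal classes, recall that $I_f=(1,\alpha_f)$ has the explicit $\Z$-basis of~\eqref{eq-zetadef} and norm $N_{R_f}(I_f)=|f_0|^{-1}=\tfrac14$ by Theorem~\ref{thm-everything}(iv). To prove $I_f$ principal I would exhibit a single element $\beta\in K_f$ with $N_{\Q}^{K_f}(\beta)=\pm\tfrac14$ and verify the equality of $\Z$-lattices $\beta\,\OO_{K_f}=I_f$ (a finite linear-algebra check, e.g.\ that $\beta^{-1}$ and $\alpha_f\beta^{-1}$ are integral and that the norms agree). This direction is routine once a candidate generator has been located.

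The crux is showing that $I_g$ is \emph{not} principal; a norm computation alone cannot decide this, so genuine information about $\mathrm{Cl}(K)$ is required, where $K\cong K_f\cong K_g$. My approach would be to compute $\mathrm{Cl}(K)$ by the standard method---bounding every ideal class by the Minkowski constant, factoring the finitely many rational primes below that bound in $\OO_K$, and reading off the group from the resulting relations---and then to locate the class of $I_g$. Concretely, since $(g_0)$ is principal we have $[I_g]=[g_0 I_g]$, and $g_0 I_g=(5,\,5\alpha_g)$ is an integral ideal of norm $g_0^{\,n-1}=125$; I would factor this ideal into primes and check that its class is nontrivial. The main obstacle is exactly this last point: non-principality is invisible to the numerical invariants and rests entirely on the class-group computation. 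Once $[I_g]\neq 1=[I_f]$ has been established, all three assertions---and hence the theorem---follow.
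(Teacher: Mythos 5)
Your proposal is correct, but it reaches the ring isomorphism $R_f\cong R_g$ by a genuinely different route than the paper. The paper proves $R_f\cong R_g$ via Bhargava's parametrization of quartic rings: it maps each quartic to a pair of integral ternary quadratic forms $(A_0,B_f)$ and exhibits an explicit element of $\GL_3(\Z)\times\GL_2(\Z)$ carrying $(A_0,B_g)$ to $(A_0,B_f)$, which by Proposition~\ref{prop-param} forces $R_f\cong R_g$ with no appeal to maximality; squarefreeness of the discriminant is invoked only afterwards, to make the ideal-class computation one about the class group of the maximal order. You instead use squarefreeness up front: $D(f)=D(g)$ squarefree plus Theorem~\ref{thm-everything}(ii) and $D(R)=[\OO_K\!:\!R]^2D(\OO_K)$ forces $R_f$ and $R_g$ to be maximal, reducing the ring isomorphism to a field isomorphism $K_f\cong K_g$, which you would certify by exhibiting a root of the monicized $g$ (your polynomials $Y^4-Y^3-248Y^2+208Y+16320$ and $Y^4-Y^3-10Y^2-175Y-750$ are computed correctly) inside $K_f$. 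Your reduction of the ``no isomorphism identifies the classes'' clause to ``$I_f$ principal, $I_g$ not'' is the same logic the paper uses implicitly, and your norm bookkeeping ($N_{R_f}(I_f)=1/4$, $g_0I_g=(5,5\alpha_g)$ integral of norm $125$) is right. For the principality dichotomy both arguments are ultimately computational: the paper certifies it with {\tt sage} (which also reports $\on{Cl}(K)\cong\Z/2\Z$), while you spell out the underlying algorithm (Minkowski bound, factoring small primes, locating $[I_g]$) together with an explicit-generator certificate for $I_f$. What the paper's route buys is independence from maximality and a window into how the example was actually found (the search procedure via isotropy and $\det B_f=1$); what your route buys is a more elementary, self-contained verification using only standard algebraic number theory---though in practice the explicit root, the generator of $I_f$, and the class group all still require computer algebra, just as the paper's verification does.
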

\begin{proof}
We first prove that $R_f$ and $R_g$ are isomorphic. Consider the map $\iota \colon \on{Sym}_4 \Z^2 \to \Z^2 \otimes_{\Z} \on{Sym}^2 \Z^3$ sending a binary quartic form $f$ as in~\eqref{eq-form} to the pair
$$(A_0, B_f) \defeq \left(\left(\begin{array}{ccc} 0 & 0 & \frac{1}{2} \\ 0 & -1 & 0 \\ \frac{1}{2} & 0 & 0\end{array}\right),\left(\begin{array}{ccc} f_0 & \frac{f_1}{2} & 0 \\ \frac{f_1}{2} & f_2 & \frac{f_3}{2} \\ 0 & \frac{f_3}{2} & f_4 \end{array}\right)\right)$$
The group $\on{GL}_3(\Z) \times \on{GL}_2(\Z)$ acts on the space $\Z^2 \otimes_{\Z} \on{Sym}^2 \Z^3$ via
$$\left(\gamma, \bigl(\begin{smallmatrix} r & s \\ t & u\end{smallmatrix}\bigr)\right) \cdot (A,B) = (r \times \gamma A \gamma^T + s \times \gamma B \gamma^T, t \times \gamma A \gamma^T + u \times \gamma B \gamma^T),$$
so we may think of the polynomial $f$ as giving rise to an orbit of $\on{GL}_3(\Z) \times \on{GL}_2(\Z)$ on $\Z^2 \otimes_{\Z} \on{Sym}^2 \Z^3$ via the map $\iota$. We then have the following result, which translates properties about invariant orders of univariate quartic polynomials into properties about the corresponding orbits of $\on{GL}_3(\Z) \times \on{GL}_2(\Z)$ on $\Z^2 \otimes_{\Z} \on{Sym}^2 \Z^3$:
\begin{prop} \label{prop-param}
Let $f,g \in \Z[X]$ be quartic polynomials such that $\iota(f)$ and $\iota(g)$ are equivalent under the action of $\on{GL}_3(\Z) \times \on{GL}_2(\Z)$. Then $R_f$ and $R_g$ are isomorphic. If $f$ and $g$ are $\on{GL}_2(\Z)$-equivalent, then $\iota(f)$ and $\iota(g)$ are equivalent under the action of $\on{GL}_3(\Z) \subset \on{GL}_3(\Z) \times \on{GL}_2(\Z)$.
\end{prop}
\begin{proof}
This is an immediate consequence of the parametrization of quartic rings given in~\cite[Theorem~2]{MR2113024} together with~\cite[Lemma~2.2]{MR2899953}, which explains how the invariant orders of univariate quartic polynomials fit into this parametrization.
\end{proof}

\noindent To prove that $R_f$ and $R_g$ are isomorphic, it suffices by Proposition~\ref{prop-param} to exhibit the pair $\left(\gamma, \bigl(\begin{smallmatrix} r & s \\ t & u \end{smallmatrix}\bigr)\right) \in \on{GL}_3(\Z) \times \on{GL}_2(\Z)$ such that $\left(\gamma, \bigl(\begin{smallmatrix} r & s \\ t & u \end{smallmatrix}\bigr)\right) \cdot (A_0, B_g) = (A_0, B_f)$. A calculation reveals that taking
$$ \gamma = \left(\begin{array}{ccc} 0 & 2 & -1 \\ -1 & 0 & 1 \\ -3 & -15 & 10 \end{array}\right) \quad \text{and} \quad \left(\begin{array}{cc} r & s \\ t & u \end{array}\right) = \left(\begin{array}{cc} 0 & 1 \\ -1 & 63 \end{array}\right)$$
does the job. Upon observing that $D(f) = D(R_f) = D(R_g) = D(g)$ is squarefree, which implies that $R_f \simeq R_g$ is the maximal order in its field of fractions, verifying that $I_f$ is principal and that $I_g$ is not can be achieved in {\tt sage} using the following code:
\begin{alltt}
R.<x> = PolynomialRing(QQ)
K.<a> = NumberField(4*x^4-x^3-62*x^2+13*x+255)
K.ideal(1,a,4*a^2-a,4*a^3-a^2-62*a).is_principal(proof = True)
L.<b> = NumberField(5*x^4-x^3-2*x^2-7*x-6)
L.ideal(1,b,5*b^2-b,5*b^3-b^2-2*b).is_principal(proof = True)
\end{alltt}
In fact, one can use {\tt sage} to verify that the class group of $R_f = R_g$ is isomorphic to $\Z/2\Z$, so $I_g$ represents the nontrivial class, which squares to the class of $I_f$. This completes the proof of Theorem~\ref{thm-example}.
\end{proof}

We now briefly explain how to search for examples such as the one presented in Theorem~\ref{thm-example}. For simplicity, we restrict our search to irreducible quartic polynomials $f \in \Z[X]$ of squarefree discriminant (so that, in particular, $R_f$ is maximal order in its field of fractions $K_f$). We claim that we can impose the following condition without loss of generality:

\medskip
\noindent \emph{Property $(i)$: The fractional ideal $I_f$ is not principal, and the ideal class group of $R_f$ has a nontrivial $2$-torsion element.}
\medskip

\noindent

\noindent To prove the first part of the claim, observe that if the desired form $g$ exists, then at least one of $I_f$ or $I_g$ is not principal. As for the second part, recall from \S\ref{sec-rings} that $I_f(2)$ represents the ideal class of the inverse different of $R_f$. In particular, if $R_f$ is isomorphic to $R_g$ for some integral binary quartic form $g$, then the ideal classes of $I_f$ and $I_g$ square to the same element of the class group of $R_f$. Thus, if $I_f$ and $I_g$ do not lie in the same ideal class, then the class group of $R_f$ has a nontrivial $2$-torsion element, namely the one represented by $I_fI_g^{-1}$. 

Note that it is not \emph{a priori} obvious that a polynomial $f$ satisfying Property (i) exists, but a computer search reveals many examples.

\medskip

Having narrowed our search to polynomials $f$ satisfying Property (i), we now explain how to construct a quartic polynomial $g \in \Z[X]$ such that $R_f$ and $R_g$ are isomorphic but such that $f$ and $g$ are not $\on{GL}_2(\Z)$-equivalent (note that, by Theorem~\ref{thm-everything}(vii), $f$ and $g$ must be $\on{GL}_2(\Z)$-inequivalent for $I_fI_g^{-1}$ to be non-principal). To construct such a $g$, we claim that it suffices to impose the following condition:

\medskip
\noindent \emph{Property $(ii)$: We have that $\det B_f = 1$ and that $B_f$ is isotropic over $\Q$.}
\medskip

\noindent Suppose Property (ii) is satisfied. Then it follows from the classification of integral ternary quadratic forms that there exists a transformation $\gamma \in \on{GL}_3(\Z)$ such that $\gamma B_f \gamma^T = A_0$. Let $B = \gamma A_0 \gamma^T$, and let $b$ denote the row-$1$, column-$3$ entry of $B$. Acting on the pair $\gamma \cdot (A_0, B_f) = (B, A_0)$ via $\bigl(\begin{smallmatrix} 0 & 1 \\ 1 & -b \end{smallmatrix}\bigr) \in \on{GL}_2(\Z)$, we obtain a pair of the shape $(A_0, \iota(g))$, where $g$ is an integral binary quartic form. By Proposition~\ref{prop-param}, $R_f$ and $R_g$ are isomorphic.

Now, if $f$ and $g$ are $\on{GL}_2(\Z)$-equivalent, then the stabilizer of the pair $(A_0, B_f)$ in $\on{GL}_3(\Z) \times \on{GL}_2(\Z)$ would contain a nontrivial element. But this is impossible: the stabilizer of $(A_0, B_f)$ is simply the group of automorphisms of the ring $R_f$, but because $R_f$ has squarefree discriminant, it has no nontrivial automorphisms.
Thus, we have the claim.

\medskip

One can then generate quartic polynomials $f \in \Z[X]$ satisfying Properties (i) and (ii), apply the above procedure to obtain the form $g$, and check whether $I_fI_g^{-1}$ is principal.

\begin{remark}
Let $\mathcal{O}$ be the ring of integers of a number field. It is a well-known result of Hecke that the ideal class of the different of $\mathcal{O}$ is a perfect square (see~\cite[Theorem~176]{MR638719}). In the discussion~\cite{52815}, Emerton asks whether, among all such square roots, there exists a canonical choice. As part of that discussion, the following observation of Wood is mentioned: when $\mathcal{O} = R_f$ for a polynomial $f \in \Z[X]$ of even degree $n$, it is easy to pick out a ``distinguished'' square root, namely the ideal class of $I_f(\frac{n-2}{2})$. Nevertheless, Theorem~\ref{thm-example} implies that at least when $n = 4$, this ``distinguished'' square root is not particularly canonical, because it depends on the choice of form $f$ such that $\mathcal{O} = R_f$.
\end{remark}

\subsection{On $k$-Hermite equivalence}

Given Theorem~\ref{normformthm}, which establishes that $[f]$ is simply the norm form of $I_f(n-1)$, it is natural to define the following family of generalizations of Hermite equivalence:

\begin{defi}
Let $f,g \in \Z[X]$ be primitive polynomials of degree $n$ and nonzero discriminant, and let $k \in \{0, \dots, n-1\}$. Then $f$ and $g$ are  \emph{$k$-Hermite equivalent} if the norm form of $I_f(k)$ with respect to $R_f$ is $\on{GL}_n(\Z)$-equivalent to the norm form of $I_g(k)$ with respect to $R_g$.
\end{defi}

\noindent Thus   the notion of $(n-1)$-Hermite equivalence coincides with Hermite equivalence.
In addition, we see that $k$-Hermite equivalence and $k'$-Hermite equivalence together imply $(k+k')$-Hermite equivalence. The converse is not in general true---see Theorem~\ref{thm-example} for a counterexample with $n = 4$, $k = 1$, and $k' = 2$. It is easy to verify that, as long as $k \not\in\{0,n-2\}$ (in which case $k$-Hermite equivalence simply amounts to having isomorphic invariant orders), every claim made in Theorems~\ref{thm-main}--\ref{thm-main3} holds with ``Hermite equivalence'' replaced by ``$k$-Hermite equivalence,'' where the occurrences of $n-1$ are replaced by $k$.

For primitive polynomials, we can define $k$-Hermite equivalence for any $k \in \Z$. It follows from Points (iii) and (v) of Theorem~\ref{thm-everything} that the notions of $k$-Hermite equivalence and $(k+n-2)$-Hermite equivalence coincide. 

\section{Finiteness theorems} \label{sec-allthefiniteness}

The purpose of this section is to prove Theorem~\ref{thm-main2}. In the first part of this section (\S\ref{sec-partI}), we recall several results from the literature concerning finiteness for $\on{GL}_2(\Z)$-equivalence (resp., $\Z$-equivalence) classes of polynomials in $\Z[X]$ (resp., monic polynomials in $\Z[X]$), and we observe that, on account of Theorem~\ref{Prop2.1}, all of these results hold with ``$\on{GL}_2(\Z)$-equivalence'' (resp., ``$\Z$-equivalence'') replaced by ``Hermite equivalence.'' In the second part of this section (\S\ref{sec-partII}), we discuss the extent to which Hermite equivalence classes fall apart into $\on{GL}_2(\Z)$-equivalence and $\Z$-equivalence classes.

\subsection{Finiteness for $\on{GL}_2(\Z)$- and $\Z$-equivalence classes} \label{sec-partI}

Lagrange~\cite{L1773} was the first to develop a reduction theory for quadratic polynomials in $\Z[X]$. His theory was made more precise by Gauss~\cite{G1801}. The theories of Lagrange and Gauss imply in an effective way that there are only finitely many $\GL_2(\Z)$-equivalence classes of quadratic polynomials in $\Z[X]$ with a given nonzero discriminant. Hermite~\cite{H1851} proved the same finiteness statement for the $\GL_2(\Z)$-equivalence classes of cubic polynomials in $\Z[X]$. Furthermore, for polynomials of general degree $n$, Hermite obtained a finiteness result for a suitable, less natural invariant $\Psi$ in place of the discriminant; his theory was made more precise by Julia~\cite{J1917}.

For polynomials of larger degree, Birch and Merriman~\cite{BM1972} proved the following result: \begin{thmalpha}[\protect{Birch and Merriman, \cite{BM1972}}]\label{ThmA}
There are only finitely many $\GL_2(\Z)$-equivalence classes of polynomials in $\Z[X]$ of given degree $n \geq 2$ and given discriminant $D \neq 0$.
\end{thmalpha}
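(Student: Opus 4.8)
The plan is to reduce the statement to the finiteness of solutions of $S$-unit equations, which is the deep input that was unavailable to Hermite. First I would pass from polynomials to binary forms: homogenizing $f$ gives a binary $n$-ic form $F$ with $D(F)=D$, and $\GL_2(\Z)$-equivalence of polynomials is by definition $\GL_2(\Z)$-equivalence of their homogenizations. Since $D(cf)=c^{2n-2}D(f)$ for $c\in\Z$, only finitely many contents occur for a fixed $D$, so I may assume $F$ is primitive. Writing $F$ over $\overline{\Q}$ as a product of $n$ linear forms $\langle \mathbf{v}_i,(X,Y)\rangle$, its roots define points $P_1,\dots,P_n\in\P^1(\overline{\Q})$, and by the product formula for the discriminant (the homogeneous analogue of the formula in the proof of Theorem~\ref{cor-discs}) one has, up to a bounded factor, $D=\prod_{i<j}[P_i,P_j]^2$, where $[P_i,P_j]$ is the $2\times 2$ determinant of chosen primitive integral representatives $\mathbf{v}_i,\mathbf{v}_j$.

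Next I would fix the arithmetic data. The splitting field $K$ of $F$ has degree at most $n!$ and ramifies only at primes dividing $D$; by the Hermite--Minkowski theorem there are only finitely many such $K$, so I may fix $K$ together with the finite set $S$ of places lying above $\infty$ and the primes dividing $D$. The crucial observation is that each determinant $[P_i,P_j]$ is then an $S$-unit: the ideal $\prod_{i<j}([P_i,P_j])^2$ equals $(D)$, which is the unit ideal in $\OO_{K,S}$, forcing every factor to be a unit there. Now for any four indices the cross-ratio $\lambda$ of $P_h,P_i,P_j,P_k$ is a ratio of products of the $[P_\bullet,P_\bullet]$, and by the Plücker relation $[P_h,P_i][P_j,P_k]-[P_h,P_j][P_i,P_k]+[P_h,P_k][P_i,P_j]=0$ the quantity $1-\lambda$ is again such a ratio; hence $\lambda$ and $1-\lambda$ are both $S$-units satisfying $\lambda+(1-\lambda)=1$. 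By the finiteness theorem for the $S$-unit equation $u+v=1$, only finitely many cross-ratios occur, and normalizing three of the points to $0,1,\infty$ by an element of $\PGL_2(\overline{\Q})$ pins the configuration $\{P_1,\dots,P_n\}$ down to finitely many possibilities up to projective equivalence.

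Finally I would descend from projective configurations over $\overline{\Q}$ to $\GL_2(\Z)$-classes of integral forms. Each admissible configuration is Galois-stable and defined over $\Q$, and among the integral binary forms of discriminant $D$ realizing a given configuration I must show there are only finitely many $\GL_2(\Z)$-orbits. This is a reduction-theory step: the configuration fixes $F$ up to $\GL_2(\overline{\Q})$ and scaling, and bounding denominators together with the standard reduction theory of lattices cuts the residual $\GL_2(\Q)$-ambiguity down to finitely many $\GL_2(\Z)$-orbits of the prescribed discriminant.

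I expect the main obstacle to be twofold. The essential depth lies in the $S$-unit equation finiteness: this is exactly the ingredient Hermite lacked, and it is what makes $\GL_2(\Z)$-finiteness genuinely harder than the Hermite-equivalence finiteness obtainable from reduction theory of the single decomposable form $[f]$. The more technical difficulty is the descent, namely controlling the passage from $\overline{\Q}$-configurations back to integral $\GL_2(\Z)$-classes uniformly, including the careful treatment of roots at infinity (when the leading coefficient vanishes as a degree-$n$ form) and of the leading-coefficient normalizations in the determinantal discriminant formula. One could instead route the argument through the invariant order---Hermite--Minkowski bounds the rings $R_f$ of discriminant $D$ and finiteness of class groups bounds the ideal data (cf.\ Theorem~\ref{thm-everything} and Corollary~\ref{cor-ringeq})---but the finiteness of forms attached to a fixed ring again ultimately rests on the same $S$-unit equation input, so the depth cannot be circumvented this way.
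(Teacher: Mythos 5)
The paper itself contains no proof of Theorem~\ref{ThmA}: it is quoted with attribution to Birch and Merriman~\cite{BM1972}, and the paper only remarks (in \S\ref{sec-partI}) that finiteness theorems for unit equations are the essential ingredient of that proof. Your sketch is a correct outline of precisely that cited argument---pairwise root determinants $[P_i,P_j]$ becoming $S$-units over the splitting field, the Pl\"ucker identity turning cross-ratios into solutions of the $S$-unit equation $u+v=1$, and a final reduction-theoretic descent from $\GL_2(\overline{\Q})$-configurations to finitely many $\GL_2(\Z)$-classes---so it follows essentially the same route as the source the paper relies on, with only standard technical points left implicit (e.g., enlarging $S$ so that $\OO_{K,S}$ is a principal ideal domain, so that primitive integral representatives exist and the ideal identity $\prod_{i<j}([P_i,P_j])^2=(D)$ makes sense, and splitting the descent into a finite-twist step over $\Q$ followed by the lattice step over $\Z$).
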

\noindent An immediate consequence of Theorem \ref{ThmA} and Theorem~\ref{Prop2.1} is the following theorem of Hermite:
\begin{thmalpha}[\protect{Hermite,~\cite{H1854,H1857}}]\label{ThmB} There are only finitely many Hermite equivalence classes of polynomials in $\Z[X]$ of given degree $n\geq 2 $ and given discriminant $D\not= 0$.
\end{thmalpha}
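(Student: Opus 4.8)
The plan is to deduce Theorem~\ref{ThmB} as a direct corollary of Theorem~\ref{ThmA} (Birch--Merriman) together with Theorem~\ref{Prop2.1}, which asserts that $\on{GL}_2(\Z)$-equivalence implies Hermite equivalence. The key observation is that if we have established finiteness for a \emph{finer} equivalence relation, then finiteness for any \emph{coarser} relation follows immediately, since coarsening can only merge classes together, never create new ones.

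The first step is to recall, via Corollary~\ref{cor-discs2}, that Hermite equivalent polynomials share the same discriminant, so the statement is at least well-posed: the Hermite equivalence classes of polynomials of degree $n$ and discriminant $D$ form a subset of the full set of such polynomials, partitioned in a discriminant-compatible way. Next, I would fix the degree $n \geq 2$ and the nonzero discriminant $D$, and consider the set $S$ of all polynomials in $\Z[X]$ of degree $n$ and discriminant $D$. By Theorem~\ref{ThmA}, the set $S$ falls into finitely many $\on{GL}_2(\Z)$-equivalence classes; call this number $N = N(n, D)$.

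The crux is then the comparison of the two partitions. By Theorem~\ref{Prop2.1}, any two polynomials that are $\on{GL}_2(\Z)$-equivalent are also Hermite equivalent. In other words, each $\on{GL}_2(\Z)$-equivalence class is entirely contained in a single Hermite equivalence class, so the Hermite partition of $S$ is coarser than the $\on{GL}_2(\Z)$ partition. Consequently, the number of Hermite equivalence classes in $S$ is at most the number of $\on{GL}_2(\Z)$-equivalence classes, namely at most $N$. Since $N$ is finite, the number of Hermite equivalence classes of polynomials of degree $n$ and discriminant $D$ is finite, which is exactly the assertion of Theorem~\ref{ThmB}.

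There is essentially no obstacle here beyond correctly invoking the two cited results; the whole content of the theorem has already been front-loaded into Theorem~\ref{ThmA} and Theorem~\ref{Prop2.1}. The one subtlety worth flagging is direction: it is vital that the implication runs ``$\on{GL}_2(\Z)$-equivalent $\Rightarrow$ Hermite equivalent'' (so that Hermite classes are unions of $\on{GL}_2(\Z)$ classes) and not the reverse, for it is precisely the reverse implication that fails in general, as the examples of Theorem~\ref{thm-main3}(ii) will show. Because the implication goes the correct way, finiteness transfers cleanly, and this is the historically significant point of the theorem: Hermite obtained his finiteness result by the far more elementary route of his reduction theory for quadratic forms, whereas the finiteness of the finer $\on{GL}_2(\Z)$ classes (Theorem~\ref{ThmA}) requires the much deeper machinery of unit equations.
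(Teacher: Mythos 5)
Your proof is correct and takes exactly the paper's route: the paper presents Theorem~\ref{ThmB} as ``an immediate consequence of Theorem~\ref{ThmA} and Theorem~\ref{Prop2.1},'' which is precisely the coarsening-of-partitions argument you spell out (Hermite classes are unions of $\on{GL}_2(\Z)$-classes, so finiteness transfers). Your side remarks---well-posedness via Corollary~\ref{cor-discs2} and the importance of the direction of the implication---are also consistent with the paper's surrounding discussion.
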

\noindent Hermite deduced Theorem~\ref{ThmB} from a reduction theory that he developed for decomposable forms, which he in turn derived from what is now considered an elementary reduction theory for positive definite quadratic forms.

As it happens, Birch and Merriman's proof of Theorem~\ref{ThmA} was \emph{ineffective}. 
On the other hand, a consequence of the theory of Hermite~\cite{H1851} and Julia~\cite{J1917} referenced above is that every polynomial $f\in \Z[X]$ of degree $n\geq 4$ is $\GL_2(\Z)$-equivalent to a polynomial $f^*$ whose height $H(f^*)$ (i.e., maximum absolute value of the coefficients) is effectively bounded above in terms of the aforementioned invariant $\Psi(f)$. In~\cite{EGy1991}, Evertse and Gy\H{o}ry finally proved an \emph{effective} version of Theorem~\ref{ThmA}, and in~\cite{EGy2017}, they improved this result and made it completely explicit. This improved result of Evertse and Gy\H{o}ry is stated as follows:

\begin{thmalpha}[\protect{Evertse and Gy\H{o}ry~\cite[Theorem 14.1.1]{EGy2017}}]\label{ThmC}
Let $f\in\Z[X]$ be a polynomial of degree $n\geq 2$ and discriminant $D\neq 0$. Then $f$ is $\GL_2(\Z)$-equivalent to a polynomial $f^*\in\Z[X]$ for which
$$H(f^*)\leq \exp\left\lbrace\left(4^2n^3\right)^{25n^2}|D|^{5n-3}\right\rbrace.$$
\end{thmalpha}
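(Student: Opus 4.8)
The plan is to follow the standard route to effective finiteness for binary forms via $S$-unit equations, reducing the assertion about $f$ to statements about ratios of differences of its roots and then invoking Baker-type estimates; the cases $n\in\{2,3\}$ are covered by the classical effective reduction theory of Lagrange, Gauss, and Hermite, so I take $n\geq 4$. Write the homogenization of $f$ as $F(X,Y)=f_0\prod_{i=1}^n(\alpha_{i,1}X-\alpha_{i,2}Y)$, and for $i\neq j$ set $\delta_{ij}\defeq\alpha_{i,1}\alpha_{j,2}-\alpha_{j,1}\alpha_{i,2}$, so that $D(f)$ equals, up to a controlled power of $f_0$, the product $\prod_{i<j}\delta_{ij}^2$. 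First I would pass to the splitting field $L$ of $f$ and to its ring of $S$-integers, where $S$ consists of the infinite places together with the finite places dividing $f_0D(f)$; the point is that each $\delta_{ij}$ is, up to a bounded factor supported on $S$, an $S$-unit, since fixing $D(f)$ pins down the product of the $\delta_{ij}^2$. Here $[L:\Q]\leq n!$, and $|S|$ as well as the discriminant and regulator of $L$ admit explicit bounds in terms of $n$ and $|D(f)|$; these are precisely the quantities that will propagate into the final estimate.

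Next I would extract unit equations from the Grassmann--Plücker relations among the $2\times 2$ minors. For any four indices $i,j,k,\ell$ one has
$$\delta_{ij}\delta_{k\ell}-\delta_{ik}\delta_{j\ell}+\delta_{i\ell}\delta_{jk}=0,$$
and dividing by $\delta_{ik}\delta_{j\ell}$ yields an equation of the shape $u+v=1$ in which $u$ and $v$ are ratios of products of the $\delta$'s, hence (after absorbing the bounded $S$-factors above) $S$-units. Applying the effective bounds for solutions of $S$-unit equations in two unknowns---proved by Baker's theory of linear forms in logarithms and its $p$-adic analogue, in the sharp quantitative form due to Gy\H{o}ry and Yu---I would bound the heights of all such cross-ratios explicitly in terms of $n$, $|D(f)|$, and the invariants of $L$.

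With the cross-ratios bounded, I would use the $\GL_2(\Z)$-action to normalize the configuration of roots on $\P^1$: after a suitable unimodular substitution one may place a chosen triple of points in (near-)standard position, whereupon the bounds on the cross-ratios translate into bounds on the heights of all the points $(\alpha_{i,1}:\alpha_{i,2})$, and hence on the coefficients of a reduced representative $f^*$. Reassembling these estimates and tracking every constant through the chain---$[L:\Q]$, $|S|$, the discriminant and regulator of $L$, and the Gy\H{o}ry--Yu bound---then yields a representative with $H(f^*)\leq\exp\{(4^2n^3)^{25n^2}|D|^{5n-3}\}$.

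I expect the genuine difficulty to lie entirely in the effective $S$-unit equation step: this is where the transcendence-theoretic input enters, and it is exactly the ingredient that upgrades the classical \emph{ineffective} finiteness (as in Birch--Merriman, or Hermite's reduction of quadratic forms) to a fully effective bound. The remaining work is bookkeeping---bounding $|S|$, $d_L$, and $R_L$ in terms of $n$ and $|D(f)|$, and combining the per-quadruple estimates uniformly over all indices---but carrying this out \emph{sharply} enough to reach the stated exponent $25n^2$ and the power $|D|^{5n-3}$ is what makes the precise quantitative form delicate.
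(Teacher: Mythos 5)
You should first note that the paper contains no proof of this statement at all: Theorem~\ref{ThmC} is a \emph{thmalpha}-style quoted result, imported verbatim from Evertse and Gy\H{o}ry's monograph (Theorem 14.1.1 of \cite{EGy2017}, building on their 1991 paper \cite{EGy1991}), and the paper's only use of it is to combine it with Theorem~\ref{Prop2.1} to get consequences for Hermite equivalence. So there is no internal argument to compare against; the relevant comparison is with the cited source. Measured against that, your outline does reproduce the genuine strategy: pass to the splitting field $L$ and a set $S$ of places containing those dividing $f_0 D$; observe that the minors $\delta_{ij}$ are essentially $S$-units because their product is pinned by $D$; extract unit equations $u+v=1$ from the Pl\"ucker relations $\delta_{ij}\delta_{k\ell}-\delta_{ik}\delta_{j\ell}+\delta_{i\ell}\delta_{jk}=0$; and apply the effective Gy\H{o}ry--Yu bounds coming from Baker's theory and its $p$-adic analogue. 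This is exactly the transcendence-theoretic input that the paper emphasizes was unavailable to Hermite and that separates Theorem~\ref{ThmC} from the ineffective Theorem~\ref{ThmA}.

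However, as a proof of \emph{this} statement your proposal has a genuine gap, in two places. First, the entire content of the theorem is the explicit bound $\exp\{(4^2n^3)^{25n^2}|D|^{5n-3}\}$, and your sketch defers precisely that to ``tracking every constant through the chain''; without bounding $[L:\Q]\le n!$, $|S|$, and the regulator of $L$ explicitly in terms of $n$ and $|D|$ and actually propagating them through the Gy\H{o}ry--Yu inequality, nothing with the stated exponents has been proved. Second, and more structurally, the passage from bounded cross-ratios to a bounded representative $f^*$ is not bookkeeping: the cross-ratios of the roots are invariants of the $\GL_2$-orbit, so bounding them can never, by itself, bound the coefficients of any particular representative. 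Over a field one can move three roots to standard position by a M\"obius transformation, but one is only allowed substitutions in $\GL_2(\Zz)$, so ``placing a chosen triple in (near-)standard position by a unimodular substitution'' is not available; in Evertse--Gy\H{o}ry this step is a separate substantial argument (a reduction/geometry-of-numbers step, descending from scalings and transformations over the $S$-integers of $L$ back to an honest element of $\GL_2(\Zz)$). So: correct roadmap, faithful to the method of the cited source, but not a proof of the quantitative assertion that constitutes the theorem.
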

\noindent In light of Theorem~\ref{Prop2.1}, Theorem~\ref{ThmC} implies a more precise, effective and quantitative variant of Hermite's result in Theorem~\ref{ThmB}. Theorem~\ref{ThmC} also provides a method to effectively determine in principle all polynomials $f \in \Z[X]$ of given degree $n \geq 2$ and given discriminant $D \neq 0$, up to $\GL_2(\Z)$-equivalence.

For monic polynomials there are finiteness results for $\Z$-equivalence, which do not follow directly
from the results on the weaker $\GL_2(\Z )$-equivalence for arbitrary polynomials mentioned above.
In the case $n=3$, Delone ($=\,$Delaunay) \cite{D1930} and Nagell ($=\,$Nagel) \cite{N1930} proved that there are only finitely many $\Z$-equivalence classes of irreducible monic cubic polynomials in $\Z[X]$ with given nonzero discriminant. The first general \emph{effective} result for monic polynomials,  was proved by Gy\H{o}ry~\cite{Gy1973} for monic polynomials of given nonzero discriminant, where the degree need \emph{not} be fixed:
\begin{thmalpha}[Gy\H{o}ry~\cite{Gy1973}]\label{ThmCC}
There are only finitely many $\Zz$-equivalence classes of monic polynomials in $\Zz[X]$ with given discriminant $D \neq 0$, and a full set of representatives of these classes can be effectively determined.
\end{thmalpha}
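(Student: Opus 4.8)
The plan is to follow Győry's reduction to $S$-unit equations, the prototype for the general effective theory recorded in~\cite{EGy2017}. Two points distinguish this statement from Theorems~\ref{ThmA}--\ref{ThmC}: the degree is not fixed, and the conclusion must be effective. I first bound the degree. Factoring $f=\prod_i f_i$ into distinct monic irreducible factors, one has $D(f)=\prod_i D(f_i)\cdot\prod_{i<j}\mathrm{Res}(f_i,f_j)^2$, with each resultant a nonzero integer (the $f_i$ being pairwise coprime); Minkowski's lower bound applied to the field discriminants of the nonlinear factors---together with the growth of $\prod_{i<j}(a_i-a_j)^2$ over distinct integer roots in the totally split part---shows that $|D|$ grows at least geometrically in $n$. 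Hence $n=O(\log|D|)$ effectively (the source of the bound $n\le 2+2\log|D|/\log 3$ in Theorem~\ref{thm-main2}(ii)), and it suffices to treat each fixed degree $n$ separately.

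Fix $n$ and write $f(X)=\prod_{i=1}^n(X-\alpha_i)$ with the $\alpha_i$ algebraic integers in the splitting field $K=\Q(\alpha_1,\dots,\alpha_n)$, so $[K:\Q]\le n!$. Since $D(f)=\prod_{i<j}(\alpha_i-\alpha_j)^2=D$, the differences $\beta_{ij}\defeq\alpha_i-\alpha_j$ satisfy $\prod_{i<j}(\beta_{ij})^2=(D)$ as ideals, so every prime dividing $\beta_{ij}$ divides $D$; letting $S$ be the set of places of $K$ that are archimedean or divide $D$, each $\beta_{ij}$ lies in $\OO_{K,S}^{\times}$. For distinct $i,j,k$ the relation $\beta_{ij}+\beta_{jk}=\beta_{ik}$ yields the $S$-unit equation
\[\frac{\beta_{ij}}{\beta_{ik}}+\frac{\beta_{jk}}{\beta_{ik}}=1,\]
whose summands are $S$-units. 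By the effective theory of $S$-unit equations---due to Győry via Baker's bounds on linear forms in logarithms---its solutions have heights bounded effectively in terms of $[K:\Q]$, $|S|$, and the $S$-regulator of $K$; thus each ratio $\beta_{ij}/\beta_{ik}$ takes one of finitely many effectively computable values.

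It remains to pass from these ratios to $f$. Taking $i=1$ shows that every $\beta_{1k}/\beta_{12}$ lies in a finite set, and since $\beta_{ij}=\beta_{1j}-\beta_{1i}$, so does every $\beta_{ij}/\beta_{12}$. Substituting into $\prod_{i<j}\beta_{ij}^2=D$ gives $\beta_{12}^{\,n(n-1)}=D\big/\prod_{i<j}(\beta_{ij}/\beta_{12})^2$, whose right-hand side takes finitely many values; hence $\beta_{12}$, and therefore every difference $\alpha_i-\alpha_j$, lies in an effectively determined finite set. The centered roots $\delta_i=\tfrac1n\sum_k\beta_{ik}$ are then determined, so the depressed polynomial $\tilde f(X)=\prod_i(X-\delta_i)=f\big(X+\tfrac1n\sum_k\alpha_k\big)$ ranges over a finite set. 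Normalizing the coefficient $-\sum_i\alpha_i$ of $X^{n-1}$ into $\{0,1,\dots,n-1\}$ by an integer translation (and using $\varepsilon=\pm1$ to fix orientation), one recovers $f$ from $\tilde f$ and this normalized coefficient up to finitely many possibilities; retaining those lying in $\Z[X]$ gives the desired finite, effectively computable set of representatives.

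The main obstacle is making the bounds of the second paragraph genuinely uniform, since $K$ varies with $f$. Although $[K:\Q]\le n!$ and every prime ramifying in $K$ divides $D$, one must bound the field discriminant $D_K$---and through it the class number, the $S$-regulator, and the heights of a fundamental system of $S$-units---effectively in terms of $n$ and $D$ alone. Here the ramification exponents in an extension of degree at most $n!$ are bounded in terms of $n$, so that $|D_K|\le|D|^{c(n)}$ for an explicit $c(n)$, after which the standard effective estimates for $S$-regulators and $S$-unit bases apply. Assembling these field-invariant bounds, and invoking the full force of Baker's method to replace mere finiteness with an effective bound, is where the real difficulty lies.
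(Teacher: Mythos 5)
The paper does not actually contain a proof of this statement---it is quoted as Theorem~\ref{ThmCC} from Gy\H{o}ry~\cite{Gy1973}, with only the remark in \S\ref{sec-partI} that finiteness results for unit equations underlie it---and your proposal correctly reconstructs precisely that argument: root differences $\alpha_i-\alpha_j$ as $S$-units in the splitting field, the $S$-unit equation $\beta_{ij}/\beta_{ik}+\beta_{jk}/\beta_{ik}=1$ made effective via Baker's method, recovery of $f$ up to integer translation from the differences, and the separate effective degree bound (the paper's Theorem~\ref{ThmD}). Your closing paragraph also correctly identifies where the real technical work lies, namely making the Baker-type bounds uniform by bounding $|D_K|$, the class number, regulator, and a fundamental system of $S$-units in terms of $n$ and $D$ alone, which is exactly how Gy\H{o}ry's proof is completed; so the proposal is sound and takes essentially the same route as the cited proof.
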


\noindent We also mention the following theorem, which is an improved version of a quantitative result of Gy\H{o}ry~\cite{Gy0974} on monic polynomials with given degree and given nonzero discriminant.

\begin{thmalpha}[\protect{Evertse and Gy\H{o}ry,~\cite[Theorem 6.6.2]{EGy2017}}]\label{ThmE}
Let $f\in\Z[X]$ be a monic polynomial of degree $n\geq 2$ and discriminant $D\neq 0$. Then $f$ is $\Z$-equivalent to a polynomial $f^*$ for which
$$H(f^*)\leq \exp\left\lbrace n^{20}8^{n^2+19}\left(|D|(\log^*|D|)^n\right)^{n-1}\right\rbrace.$$
\end{thmalpha}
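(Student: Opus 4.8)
The plan is to reduce the assertion to an effective bound for solutions of unit equations via the discriminant equation. Since $f$ is \emph{monic} with integer coefficients, its roots $\alpha_1,\dots,\alpha_n$ are algebraic integers lying in an \'etale $\Q$-algebra, and
$$D=\prod_{1\le i<j\le n}(\alpha_i-\alpha_j)^2.$$
The effect of a $\Z$-equivalence $f\mapsto \varepsilon^n f(\varepsilon X+a)$ is to replace each $\alpha_i$ by $\varepsilon(\alpha_i-a)$ with $\varepsilon\in\{\pm1\}$ and $a\in\Z$; in particular it fixes each difference $\delta_{ij}:=\alpha_i-\alpha_j$ up to the common sign $\varepsilon$. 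Thus bounding $H(f^*)$ amounts to (i) bounding the $\delta_{ij}$ effectively in terms of $n$ and $D$, and (ii) choosing the integer translation $a$ so as to center the roots.

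First I would observe that each $\delta_{ij}$ is an algebraic integer whose generated ideal divides $(D)$ in the ring of integers of the splitting field $L$, since $\prod_{i<j}\delta_{ij}^2=D$. Consequently every ratio $\delta_{ij}/\delta_{ik}$ is an $S$-unit, where $S$ is the finite set of places of $L$ lying above the rational primes dividing $D$: at a place $v\notin S$ both $\delta_{ij}$ and $\delta_{ik}$ have valuation $0$, so their quotient does too. The three-term identities $(\alpha_i-\alpha_j)+(\alpha_j-\alpha_k)+(\alpha_k-\alpha_i)=0$ then yield, upon dividing through by $\alpha_i-\alpha_k$, the unit equations
$$\frac{\alpha_i-\alpha_j}{\alpha_i-\alpha_k}+\frac{\alpha_j-\alpha_k}{\alpha_i-\alpha_k}=1,$$
in which both summands are $S$-units.

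The heart of the argument is to feed these into the completely explicit effective bounds for solutions $x+y=1$ with $x,y\in\OO_S^\times$. These rest on Baker's theory of linear forms in logarithms together with its $p$-adic analogue due to Yu, in the fully explicit form developed by Gy\H{o}ry and Yu, and they bound the logarithmic heights of the ratios $\delta_{ij}/\delta_{ik}$ in terms of the degree and discriminant of $L$ and the cardinality of $S$---quantities that are themselves controlled by $n$ and $D$. From effective bounds on all such ratios, together with the normalization $\prod_{i<j}\delta_{ij}^2=D$, one recovers effective bounds on the archimedean absolute values of the differences $\delta_{ij}$ themselves. Since $f$ has real coefficients, each non-real root is accompanied by its complex conjugate, so the differences also bound the imaginary parts of the roots; translating by the integer $a$ nearest to the real part of a fixed root then confines all roots of $f^*$ to a bounded box, and the elementary symmetric functions convert these root bounds into the asserted bound on $H(f^*)$.

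The main obstacle is precisely the effective $S$-unit input: this is where the transcendence-theoretic machinery enters and where the explicit shape of the bound---the tower $n^{20}8^{n^2+19}\bigl(|D|(\log^*|D|)^n\bigr)^{n-1}$---is produced. The remaining steps are careful bookkeeping: dominating the degree of $L$, the size of $S$, and the relevant regulator-type quantities by explicit functions of $n$ and $D$, and passing cleanly between bounds on roots and bounds on coefficients. I would also dispose of the reducible case (where $K_f$ is a genuine product of fields) by carrying out the same analysis inside a common splitting field, noting that differences of roots belonging to distinct factors likewise divide $D$ and hence furnish $S$-units exactly as above.
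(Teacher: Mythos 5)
This statement is not proved in the paper at all: it is quoted verbatim as an external result (Evertse--Gy\H{o}ry \cite[Theorem 6.6.2]{EGy2017}), so there is no internal proof to compare against. Your sketch does, however, follow essentially the same route as the actual proof in that reference and in Gy\H{o}ry's original work \cite{Gy1973,Gy0974}: exploit that the root differences $\delta_{ij}$ divide $D$ in the splitting field, form the $S$-unit equations coming from the identities $(\alpha_i-\alpha_j)+(\alpha_j-\alpha_k)+(\alpha_k-\alpha_i)=0$, invoke the explicit Baker--Yu--Gy\H{o}ry bounds, and then normalize by an integer translation to pass from bounded differences to bounded coefficients --- precisely the mechanism the paper alludes to when it says that finiteness results for unit equations underlie Theorems~\ref{ThmA}, \ref{ThmC}, \ref{ThmCC}, and~\ref{ThmE}. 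The one caveat is that your write-up is a strategy outline rather than a proof of the stated theorem: the entire content of the statement is the explicit constant $n^{20}8^{n^2+19}\bigl(|D|(\log^*|D|)^n\bigr)^{n-1}$, and your argument defers exactly that computation (degree of $L$, size of $S$, regulator bounds, and the bookkeeping through the unit-equation estimates) to the cited machinery, so it cannot be checked that this particular bound emerges.
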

\noindent
In both Theorems \ref{ThmC} and \ref{ThmE}, the degree $n$ of $f$ can also be estimated from above in terms of $|D(f)|$. 

\begin{thmalpha}[\protect{Gy\H{o}ry,~\cite{Gy0974}}]\label{ThmD}
Every polynomial $f\in \Z[X]$ with nonzero discriminant $D$ has degree
$$n\leq 3+2\log|D|/\log3 .$$
\end{thmalpha}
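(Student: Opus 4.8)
The plan is to reformulate the degree bound as a lower bound on the discriminant and then prove that bound by separating the rational roots of $f$ from the rest. Taking logarithms, the inequality $n\le 3+2\log|D|/\log 3$ is equivalent to the clean statement $|D(f)|\ge 3^{(n-3)/2}$. Since $D\neq 0$ forces the integer $|D(f)|\ge 1$, this is automatic for $n\le 3$, so the real content is a geometric-in-$n$ lower bound for the (nonzero integer) discriminant. The key move is to factor $f=A\cdot B$ in $\Z[X]$, via Gauss's lemma, where $A=\prod_a(q_aX-p_a)$ collects the $r$ distinct rational roots $\rho_a=p_a/q_a$ (in lowest terms) and $B$ collects the remaining roots; thus $B$ has no rational roots, so every irreducible factor of $B$ generates a number field of degree $\ge 2$. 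By multiplicativity of the discriminant, $D(f)=\pm D(A)\,D(B)\,\on{Res}(A,B)^2$, and since $\on{Res}(A,B)$ is a nonzero integer we get $|D(f)|\ge |D(A)|\cdot|D(B)|$.

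For the irrational part, I would invoke $D(B)=D(R_B)$ from Theorem~\ref{thm-everything}(ii) together with the inclusion of $R_B$ in the maximal order $\mathcal{O}_{K_B}$ of the \'etale algebra $K_B=\prod_i K_i$, which gives $|D(B)|\ge |d_{K_B}|=\prod_i|d_{K_i}|$. The crucial arithmetic input is Minkowski's discriminant bound, yielding $|d_{K_i}|\ge 3^{n_i/2}$ for every number field $K_i$ of degree $n_i\ge 2$; this is sharp, attained by $\Q(\sqrt{-3})$ with $|d|=3$, and is precisely where the base $3$ originates. Multiplying over the factors gives $|D(B)|\ge 3^{(\deg B)/2}=3^{(n-r)/2}$.

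For the rational part, a direct computation gives $|D(A)|=N^2$ with $N=\prod_{a<b}|p_aq_b-p_bq_a|\in\Z_{\ge 1}$ (this $N$ is exactly the index $[\Z^r:R_A]$). Here the offset $-3$ enters: reducing the $r$ distinct points $[p_a:q_a]$ into $\P^1(\mathbb{F}_2)$, which has only three points, forces collisions, and each colliding pair $a<b$ contributes a factor of $2$ to $N$. Writing $s_1,s_2,s_3$ for the sizes of the three fibers, the number of colliding pairs is $\sum_i\binom{s_i}{2}\ge\sum_i(s_i-1)=r-3$ (using $\binom{s}{2}\ge s-1$ for all $s\ge 0$), so $N\ge 2^{\,r-3}$ and hence $|D(A)|\ge 4^{\,r-3}\ge 3^{(r-3)/2}$; for $r\le 3$ one uses instead $|D(A)|\ge 1\ge 3^{(r-3)/2}$. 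Combining the two estimates yields $|D(f)|\ge 3^{(r-3)/2}\cdot 3^{(n-r)/2}=3^{(n-3)/2}$, as desired.

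The main obstacle is establishing the uniform field bound $|d_K|\ge 3^{m/2}$ for all $m\ge 2$. Minkowski's bound handles large $m$ with room to spare, since the Minkowski constant grows like $(e\sqrt{\pi}/2)^m\approx 2.41^m$, far outstripping $3^{m/4}\approx 1.32^m$; however, the asymptotic form degrades for the very smallest degrees, so one must check $m=2,3,4$ against the known minimal field discriminants ($3,23,117$), with the case $m=2$ being exactly tight. The remaining bookkeeping---the elementary inequality $\sum_i\binom{s_i}{2}\ge r-3$ and the verification that the two exponential estimates combine to base exactly $\sqrt 3$ with offset exactly $3$---is routine.
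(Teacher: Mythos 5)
Your proposal cannot be compared with an argument in the paper itself, because the paper does not prove Theorem~\ref{ThmD}: it is stated with a citation to Gy\H{o}ry~\cite{Gy0974} and no proof is reproduced in the text. Judged on its own merits, your proof is correct and complete. The reformulation as $|D(f)|\ge 3^{(n-3)/2}$ is the right one; Gauss's lemma does give an integral factorization $f=AB$ with $A=\prod_a(q_aX-p_a)$ running over the distinct rational roots and $B\in\Z[X]$ without rational roots; the identity $D(AB)=D(A)\,D(B)\,\on{Res}(A,B)^2$ together with $\on{Res}(A,B)\in\Z\setminus\{0\}$ yields $|D(f)|\ge|D(A)|\,|D(B)|$; the computation $|D(A)|=\prod_{a<b}(p_aq_b-p_bq_a)^2$ is correct, and since the primitive vectors $(p_a,q_a)$ reduce to nonzero vectors modulo $2$, two of them lying over the same point of $\P^1(\mathbb{F}_2)$ indeed have even ``cross-determinant,'' so the $\sum_i\binom{s_i}{2}\ge r-3$ colliding pairs give $|D(A)|\ge 4^{r-3}\ge 3^{(r-3)/2}$ (with the trivial bound $|D(A)|\ge 1$ covering $r\le 3$). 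For $B$, Theorem~\ref{thm-everything}(ii) plus the inclusion $R_B\subseteq\prod_i\OO_{K_i}$ gives $|D(B)|\ge\prod_i|d_{K_i}|$, and the uniform bound $|d_K|\ge 3^{[K:\Q]/2}$ does hold for every number field of degree $\ge 2$: raw Minkowski suffices in degree $\ge 3$, while in degree $2$ Minkowski only gives $|d_K|>2.46$ and one finishes by integrality of $d_K$ (or the tight example $\Q(\sqrt{-3})$) --- exactly the caveat you flagged. In the degenerate cases $r=0$ or $\deg B=0$ the corresponding factor is simply absent and the bound survives trivially. Two remarks confirming you have reconstructed the ``right'' proof: the constants are forced (base $\sqrt{3}$ from $\Q(\sqrt{-3})$, offset $3$ from $\#\P^1(\mathbb{F}_2)$), which is consistent with the equality cases determined in~\cite{Gy0974}; and your argument yields, with no extra work, the monic refinement $n\le 2+2\log|D|/\log 3$ quoted immediately after Theorem~\ref{ThmD}, since integer roots pigeonhole into the two residue classes modulo $2$ rather than the three points of $\P^1(\mathbb{F}_2)$, replacing $r-3$ by $r-2$.
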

\noindent Furthermore, in~\cite{Gy0974} it is given when equality holds in Theorem~\ref{ThmD}. For monic polynomials $f\in\Zz[X]$, the upper bound is slightly improved in~\cite{Gy0974} to $2+2\log|D|/\log3$.

Clearly, Theorem \ref{ThmCC} is a consequence of Theorem \ref{ThmE} and the subsequent estimate for the degree of a polynomial in terms of its discriminant. Likewise, Theorems \ref{ThmC} and \ref{ThmD}
imply that the polynomials in $\Z [X]$ of given discriminant lie in only finitely many $\GL_2 (\Z)$-equivalence classes, a full system of representatives of which can be determined effectively.

For generalizations of Theorems~\ref{ThmA},~\ref{ThmC},~\ref{ThmCC}, and~\ref{ThmE} (e.g., for polynomials with $S$-integral coefficients over number fields) we refer respectively to Birch and Merriman~\cite{BM1972}, to Gy\H{o}ry~\cite{Gy1978a,Gy1978b,Gy1998}, and to Evertse and Gy\H{o}ry~\cite{EGy1991,EGy2017}. Theorem \ref{ThmCC} and its consequences, quantitative versions and generalizations provided effective finiteness results for monogeneity and power integral bases of number fields; cf. Gy\H{o}ry~\cite{Gy0974,Gy1976,Gy1978a,Gy1978b,Gy1980,Gy2000} and Evertse and Gy\H{o}ry~\cite{EGy2017}.

Because Hermite equivalence is a weaker notion than $\on{GL}_2(\Z)$-equivalence, which is in turn strictly weaker than $\Z$-equivalence, Theorems~\ref{ThmA},~\ref{ThmC},~\ref{ThmCC}, and~\ref{ThmE} are more precise than Theorem~\ref{ThmB}. Finiteness theorems concerning unit equations played an important role in the proofs of Theorems~\ref{ThmA},~\ref{ThmC},~\ref{ThmCC}, and~\ref{ThmE}, but such finiteness results were not available to Hermite.

\subsection{Comparison of Hermite, $\on{GL}_2(\Z)$-, and  $\Z$-equivalence} \label{sec-partII}

Theorems \ref{ThmA}--\ref{ThmE} imply in particular that any Hermite equivalence class of separable polynomials (resp., separable monic polynomials) in $\Z[X]$ is a union of at most finitely many $\GL_2(\Zz )$-equivalence classes (resp., $\Zz$-equivalence classes). In the next theorem, we have collected some upper bounds for the number of $\GL_2(\Zz )$-equivalence classes (resp., $\Zz$-equivalence classes) going into an Hermite equivalence class, which are easily derived from the existing literature.

\begin{thm}\label{thm3.5}
We have the following eight points:
\begin{itemize}[leftmargin=35pt]
\item[$(i)$] Separable quadratic polynomials in $\Z[X]$ are Hermite equivalent if and only if they are $\on{GL}_2(\Z)$-equivalent.
\item[$(ii)$] Separable monic quadratic polynomials in $\Z[X]$ are Hermite equivalent if and only if they are $\Z$-equivalent.
\item[$(iii)$] Separable cubic polynomials in $\Z[X]$ are Hermite equivalent if and only if they are $\on{GL}_2(\Z)$-equivalent.
\item[$(iv)$] Every Hermite equivalence class of separable monic cubic polynomials in $\Z[X]$ is a union of at most $10$ $\Zz$-equivalence classes.
\item[$(v)$] Every Hermite equivalence class of separable quartic polynomials in $\Z[X]$ is a union of at most $10$ $\on{GL}_2(\Z)$-equivalence classes $($and at most $7$ if the discriminant is sufficiently large$)$.
\item[$(vi)$] Every Hermite equivalence class of separable monic quartic polynomials in $\Z[X]$ is a union of at most $2760$ $\Z$-equivalence classes $($and at most $182$ if the discriminant is sufficiently large$)$.
\item[$(vii)$] Let $n\geq 5$. Then every Hermite equivalence class of separable degree-$n$ polynomials in $\Z[X]$ is a union of at most $2^{5n^2}$ $\GL_2(\Zz )$-equivalence classes.
\item[$(viii)$] Let $n\geq 5$. Then every Hermite equivalence class of separable monic degree-$n$ polynomials in $\Z[X]$ is a union of at most $2^{5n^2}$ $\Zz$-equivalence classes.
\end{itemize}
\end{thm}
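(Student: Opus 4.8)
The unifying idea is to translate each of the eight statements into a counting problem about invariant orders and to feed that problem into the quantitative results already cited in the introduction. Two preliminary reductions set this up. First, by Theorem~\ref{Prop2.1}, $\on{GL}_2(\Z)$-equivalence (resp.\ $\Z$-equivalence of monic polynomials) implies Hermite equivalence, so every Hermite class is a disjoint union of $\on{GL}_2(\Z)$-classes (resp.\ $\Z$-classes). Second, by Corollary~\ref{cor-ringeq} a Hermite class of separable degree-$n$ polynomials is exactly the set of $f$ with a fixed isomorphism class of invariant order $R_f$ together with a fixed ideal class of $I_f(n-1)$; for monic $f$ the ideal $I_f$ is the unit ideal, so by Corollary~\ref{cor3.4} the Hermite class is simply the set of monic $f$ with a fixed invariant order. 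Hence in every case the number of $\on{GL}_2(\Z)$-classes (resp.\ $\Z$-classes) inside one Hermite class equals the number of such classes realizing a fixed pair $(R_f,[I_f(n-1)])$ (resp.\ a fixed $R_f$), and it is precisely this quantity that the cited results bound.

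For $n=2$, points (i) and (ii) require no counting: Remark~\ref{rmk-2} shows that Hermite equivalence and $\on{GL}_2(\Z)$-equivalence coincide for binary quadratic forms, and Remark~\ref{rmk-7} shows that monic quadratics are Hermite equivalent exactly when they are $\Z$-equivalent. For $n=3$, point (iii) follows from the Levi--Delone--Faddeev bijection (Remark~\ref{rmk-3}): isomorphic cubic invariant orders force $\on{GL}_2(\Z)$-equivalence, so by Corollary~\ref{cor3.2} a Hermite class of separable cubics is a single $\on{GL}_2(\Z)$-class, which together with Theorem~\ref{Prop2.1} gives the stated equivalence. Point (iv) is the first genuine count: the reduction turns it into bounding the number of $\Z$-classes of monic cubics generating a fixed cubic order, i.e.\ the number of monogenerators up to translation and sign, which is at most $10$ by the cubic case of the results of Levi--Delone--Faddeev~\cite{MR0160744} and Bennett~\cite{Ben2001}.

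For $n=4$, points (v) and (vi) follow by inserting into the reduction the explicit orbit counts of Akhtari and Bhargava~\cite{quartics}, which bound the number of $\on{GL}_2(\Z)$-classes (resp.\ $\Z$-classes of monic forms) attached to a fixed quartic invariant order and ideal class; this yields the bounds $10$ and $2760$, sharpening to $7$ and $182$ once the discriminant is large enough to rule out the finitely many small-discriminant exceptions. For $n\ge 5$, points (vii) and (viii) follow from the uniform estimate $2^{5n^2}$ of Evertse and Gy\H{o}ry~\cite{EGy2017} and Evertse~\cite{E2011} on the number of $\on{GL}_2(\Z)$- (resp.\ $\Z$-)equivalence classes of polynomials with prescribed invariant data.

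The substantive difficulty is entirely external to this paper: it is the quantitative/effective orbit-counting inputs (Bennett for $n=3$, Akhtari--Bhargava for $n=4$, Evertse--Gy\H{o}ry and Evertse for $n\ge 5$), which rest on deep finiteness theorems for Thue and unit equations not available to Hermite. The work that remains internal is the bookkeeping in the reduction. One must confirm that ``lying in one Hermite class'' is not merely ``having isomorphic invariant orders'' but additionally fixes the class of $I_f(n-1)$ (Corollary~\ref{cor-ringeq}) -- this matters precisely for the non-monic points (v) and (vii), where the per-invariant-order count can grow with the class group while the per-Hermite-class count does not -- and then check that the cited bounds are stated for, or at least dominate, this finer datum so that they apply verbatim. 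Equally, one must keep the normalizations straight (separable versus primitive, monic versus general, and the passage between binary $n$-ic forms and univariate polynomials) to be sure the small explicit constants $10$, $7$, $2760$, and $182$ transfer without loss.
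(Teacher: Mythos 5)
Your proposal is correct and follows essentially the same path as the paper: points (i)--(iii) come from Remarks~\ref{rmk-2}, \ref{rmk-7}, and~\ref{rmk-3}; the remaining points reduce, via Theorem~\ref{Prop2.1} together with Corollaries~\ref{cor3.2} and~\ref{cor3.4}, to the external counting results of Bennett, Akhtari--Bhargava, and Evertse--Gy\H{o}ry. (For point (iv) the paper makes the reduction concrete rather than citing a black box: monic $\GL_2(\Z)$-translates of a cubic $f$ are mapped to solutions of the Thue equation $F(x,y)=1$, where $F$ is the homogenization of $f$, and Bennett's bound of $10$ on such solutions is invoked; this is the same mathematics as your ``monogenizations of a fixed cubic order'' count.)

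Two points of difference are worth flagging. First, the quartic counting theorems of~\cite{quartics} are stated for orders in quartic \emph{number fields}, i.e., they cover only Hermite classes of irreducible polynomials, whereas (v) and (vi) concern separable, possibly reducible, quartics; since a Hermite class consists entirely of irreducible or entirely of reducible polynomials, the reducible classes need a separate word. The paper supplies it: in (v) by noting that the bound only improves in the reducible case, and in (vi) by invoking point (iv) to obtain the bound $10$ for reducible monic quartics. Your write-up omits this patch. Second, your parenthetical worry that ``the per-invariant-order count can grow with the class group'' is unnecessary and has the logic reversed: the cited bounds are stated per invariant order, and since each Hermite class is contained in a single invariant-order isomorphism class (Corollary~\ref{cor3.2}), those bounds automatically dominate the per-Hermite-class count. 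This is exactly why the paper needs only the coarser Corollary~\ref{cor3.2}, and never the finer Corollary~\ref{cor-ringeq}, for these upper bounds.
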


\begin{proof}
(i)--(iii). These points respectively follow from Remarks~\ref{rmk-2},~\ref{rmk-7}, and~\ref{rmk-3}.

\medskip
\noindent (iv). By Theorem~\ref{Prop2.1} (and its converse, which holds when $n = 3$), it suffices to show that every $\GL_2(\Zz )$-equivalence class of separable cubic polynomials in $\Z[X]$ is a union of at most $10$ $\Zz$-equivalence classes. Let $f$ be such a cubic, and let $g$ be the translate of $f$ by an element $\bigl(\begin{smallmatrix}a &b\\ c&d\end{smallmatrix}\bigr)\in\GL_2(\Zz )$. Writing $F(X,Y)$ for the homogenization of $f$, we have that $F(a,c) = 1$, so we obtain a map from monic $\on{GL}_2(\Z)$-translates of $f$ to solutions of the cubic Thue equation $F(x,y) = 1$. It is easy to verify that, under this map, two translates $g$ and $g'$ are sent to the same solution if and only if $g$ and $g'$ are $\Z$-equivalent. The result then follows from a theorem of Bennett~\cite{Ben2001}, which states that the equation $F(X,Y) = 1$ has at most $10$ solutions.
%

\medskip
\noindent (v). By Bhargava~\cite[Theorem~1.2]{quartics}, if $\OO$ is an order in a quartic number field, then there are at most $10$ $\on{GL}_2(\Z)$-equivalence classes of quartic polynomials $f \in \Z[X]$ such that $\OO = R_f$ (and at most $7$ if $D(\OO) \gg 1$).  While the work~\cite{quartics} treats only the case of irreducible quartic polynomials, it is well-known that the bound only gets better in the reducible case. The result then follows from Corollary~\ref{cor3.2}.

\medskip
\noindent (vi). By Akhtari and Bhargava~\cite[Theorem~1.1]{quartics}, if $\OO$ is an order in a quartic number field, then there are at most $2760$ $\Z$-equivalence classes of elements $\alpha \in \OO$ such that $\OO = \Z[\alpha]$ (and at most $182$ if $D(\OO) \gg 1$). In the reducible case, we get a bound of $10$ from Point (iv). The result then follows from Corollary~\ref{cor3.4}.

\medskip
\noindent (vii). By Evertse and Gy\H{o}ry~\cite[Theorem~17.1.1]{EGy2017}, there are at most $2^{5n^2}$ $\on{GL}_2(\Z)$-equivalence classes of separable degree-$n$ polynomials in $\Z[X]$ having the same invariant order. The result then follows from Corollary~\ref{cor3.2}.

\medskip
\noindent (viii). This point follows from Evertse and Gy\H{o}ry~\cite[Theorem~9.1.4]{EGy2017}. In the case that the Hermite equivalence class under consideration consists of irreducible polynomials of degree $n$,~\cite[Theorem~1.1]{E2011} obtained the slightly better bound $2^{4(n+5)(n-2)}$.
\end{proof}

In \S\ref{section4}--\ref{section5}, we give various examples of pairs of polynomials
$(f,g)$ of degree $n\geq 4$ that are Hermite equivalent
but not $\GL_2(\Zz )$-equivalent. On the other hand we conjecture that for $n\geq 5$,
`most' Hermite equivalence classes consist of only one $\GL_2(\Zz )$-equivalence class. To state our conjecture precisely, and for the sake of convenience in the rest of the paper, we introduce the following notation:
\begin{nota*}
For an integer $n \geq 1$, let $\PI (n)$ denote the set of primitive irreducible polynomials in $\Zz [X]$ of degree $n$, and let $\MI (n) \subset \PI(n)$ denote the subset of monic polynomials. For a number field $K$, let $\PI (K) \subset \PI(n)$ denote the subset of polynomials $f$ with $K_f = K$, and let $\MI (K) \subset \MI(n)$ denote the subset of polynomials $f$ with $K_f=K$.
\end{nota*}

\begin{conj}\label{conj3.6}
Let $K$ be a number field of degree $\geq 5$.
Then among the Hermite equivalence classes of polynomials in $\PI (K)$,
there are only finitely many that split into more than one $\GL_2(\Zz )$-equivalence class.
\end{conj}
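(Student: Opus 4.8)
The plan is to recast the whole question in terms of lattices, where both equivalences become statements about homothety. By Theorems~\ref{normformthm} and~\ref{gennormformthm}, if $\alpha$ is a root of $f$ and $\Lambda_f := \Z + \Z\alpha$ is the associated rank-$2$ lattice in $K$, then $[f]$ is the norm form of $\Lambda_f^{\,n-1} = \langle 1,\alpha,\dots,\alpha^{n-1}\rangle = I_f(n-1)$, so $f$ and $g$ are Hermite equivalent if and only if the rank-$n$ lattices $\Lambda_f^{\,n-1}$ and $\Lambda_g^{\,n-1}$ are homothetic (equal up to a factor in $K^\times$). In the same way the homogenization $F_f(X,Y) = f_0\,N^{K}_{\Q}(X-\alpha Y)$ is the norm form of the rank-$2$ lattice $\Lambda_f$, so (compare Theorem~\ref{thm-everything}(vii)) $f$ and $g$ are $\GL_2(\Z)$-equivalent if and only if $\Lambda_f$ and $\Lambda_g$ are themselves homothetic. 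Thus a Hermite class in $\PI(K)$ splits into more than one $\GL_2(\Z)$-class precisely when the rank-$n$ lattice $J := \Lambda_f^{\,n-1}$ admits two homothetically inequivalent rank-$2$ ``$(n-1)$-st roots.'' Since the slope of any rank-$2$ root of the full lattice $J$ automatically generates $K$, Conjecture~\ref{conj3.6} becomes the assertion that, for all but finitely many homothety classes of rank-$2$ lattices $\Lambda\subset K$ whose slope generates $K$, the lattice $\Lambda$ is the \emph{unique} rank-$2$ $(n-1)$-st root of $\Lambda^{\,n-1}$.

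Granting this reformulation, I would extract a Diophantine equation from the existence of a second root. Suppose $\Lambda = \Z + \Z\alpha$ and $\Lambda' = \Z + \Z\beta$ satisfy $\Lambda'^{\,n-1} = c\,\Lambda^{\,n-1}$ for some $c\in K^\times$ but are not homothetic. Comparing $\Z$-bases of $\langle 1,\beta,\dots,\beta^{n-1}\rangle$ and $c\langle 1,\alpha,\dots,\alpha^{n-1}\rangle$ yields integral relations of the shape $c^{-1} = \sum_i a_i\alpha^i$ and $\beta = c\sum_i b_i\alpha^i$ (and symmetrically with the roles of $\alpha,\beta$ reversed), with $a_i,b_i\in\Z$. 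Eliminating $c$ and $\beta$, and using that $\Lambda$ and $\Lambda'$ generate the same invariant order by Corollary~\ref{cor3.2}, these relations translate into a system of $S$-unit / Thue--Mahler equations over the \emph{fixed} field $K$, where $S$ consists of the infinite places together with the primes dividing the leading coefficient and the index $[\OO_K:R_f]$. The finiteness theorems of Evertse--Gy\H{o}ry (and, quantitatively, the Subspace Theorem) then bound the number of solutions of each such system, hence the number of twins $\Lambda'$ attached to a \emph{fixed} $f$; this already recovers the bound in Theorem~\ref{thm3.5}(vii).

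The essential difficulty is passing from ``finitely many twins per $f$'' to ``finitely many $f$ admitting a twin at all.'' As $f$ ranges over $\PI(K)$, the set $S$ grows without bound---both the leading coefficient and the index $[\OO_K:R_f]$ are unbounded---so the per-equation finiteness is not uniform, and summing it over the infinitely many orders $R_f\subset\OO_K$ gives no control. A proof must therefore supply a \emph{uniform} statement: either a gap principle showing that a twin forces the height of $f$ (equivalently $|D(f)|$, equivalently the index) to be bounded, so that only finitely many $f$ survive, or a direct argument that the rank-$2$ $(n-1)$-st root is non-unique for only finitely many $J$, via the quantitative Subspace Theorem applied simultaneously across all $S$. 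I expect the hypothesis $\deg K\ge 5$ to enter precisely here through an overdeterminacy count: $\Lambda^{\,n-1}$ records the $n$ roots as points on a rational normal curve of degree $n-1$ in $\mathbb{P}^{n-1}$, a second root corresponds to a second $\GL_n(\Z)$-integral rational normal curve through those same $n$ points, and for $n\ge 5$ the integrality condition should be rigid enough that such curves cannot accumulate. Turning this rank count into an unconditional, $S$-uniform finiteness statement is the main obstacle, and is presumably why the assertion is posed only as a conjecture.
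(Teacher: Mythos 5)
You have not proved Conjecture~\ref{conj3.6}, and neither does the paper: it is stated there as an open conjecture, so there is no proof of it to compare yours against. The closest the paper comes is Theorem~\ref{ThmF}, quoted from B\'erczes, Evertse, and Gy\H{o}ry~\cite[Theorem 1.2 (iii)]{BEGy2013}, which establishes the conclusion only for \emph{monic} polynomials ($\MI(K)$ rather than $\PI(K)$) and only under the extra hypothesis that the normal closure of $K$ has Galois group $S_n$; the paper stresses that even this partial result is proved by an ineffective method. Within that context, the first half of your plan is sound and consistent with what is known: your lattice reformulation (Hermite equivalence of $f,g\in\PI(K)$ as homothety, up to an automorphism of $K$, of $\langle 1,\alpha,\dots,\alpha^{n-1}\rangle$ and $\langle 1,\beta,\dots,\beta^{n-1}\rangle$; $\GL_2(\Zz)$-equivalence as homothety of $\Zz+\Zz\alpha$ and $\Zz+\Zz\beta$) is correct by Theorem~\ref{thm-equivalence} and the footnote in \S\ref{section4}, and the per-polynomial finiteness you extract from unit-equation theorems is exactly Theorem~\ref{thm3.5}(vii). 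But the step you yourself flag as the ``essential difficulty''---converting finiteness of twins for each fixed $f$ into finiteness of the set of Hermite classes admitting any twin at all, uniformly over the infinitely many orders $R_f\subseteq\OO_K$ and the unbounded sets $S$---is not a technical loose end to be supplied later; it is the entire content of the conjecture, and you offer no mechanism for it beyond a heuristic. So the proposal is a correct reduction plus an acknowledged gap, not a proof, which matches the status of the statement in the paper.

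One further caution about your guess as to where $\deg K\ge 5$ enters. Your proposed rigidity count for rational normal curves through the $n$ root points does not obviously distinguish $n=4$ from $n\ge 5$ (a rational normal curve of degree $n-1$ in $\P^{n-1}$ is determined only by $n+2$ points in general position, so the $n$ points alone never pin it down for any $n$), yet any valid uniformity argument \emph{must} make that distinction: the paper proves in \S\ref{sec-infmon4} (Theorem~\ref{thm-fmandgm}, built on Kappe--Warren and~\cite{BEGy2013}) that the analogous statement is \emph{false} in degree $4$, with infinitely many Hermite classes of monic quartics each splitting into at least two $\GL_2(\Zz)$-classes, coming from pairs $\beta_m=\alpha_m^2+r_m$, $\alpha_m=\beta_m^2+s_m$ with $\Zz[\alpha_m]=\Zz[\beta_m]$. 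The obstruction a proof would have to rule out for $n\ge 5$ is precisely this kind of low-degree polynomial relation between the two lattice generators---this is what the Galois-theoretic hypothesis in Theorem~\ref{ThmF} is used to exclude---and an argument blind to it cannot close your gap.
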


\noindent Conjecture~\ref{conj3.6} has already been proved if we restrict our consideration to monic polynomials
and impose some condition on the number field $K$.
Indeed, we have the following result, which is a direct consequence of B\'{e}rczes, Evertse, and Gy\H{o}ry~\cite[Theorem 1.2 (iii)]{BEGy2013}:

\begin{thmalpha}\label{ThmF}
Let $K$ be a number field of degree $n\geq 5$, whose normal closure has Galois group $S_n$. Then among the Hermite equivalence classes of polynomials in $\MI (K)$,
there are only finitely many that split into more than one $\GL_2(\Zz )$-equivalence class.
\end{thmalpha}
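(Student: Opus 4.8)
The plan is to translate the statement into the language of invariant orders via Corollary~\ref{cor3.4} and then to deduce it from the finiteness theorem for multiply monogenic orders of B\'{e}rczes, Evertse, and Gy\H{o}ry~\cite{BEGy2013}.

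First I would record that $\on{Aut}(K/\Q)$ is trivial: the point stabilizer $S_{n-1}$ is a maximal, non-normal subgroup of $S_n$, hence self-normalizing, so $\on{Aut}(K/\Q) \cong N_{S_n}(S_{n-1})/S_{n-1}$ is trivial. Every $f \in \MI(K)$ is monic and irreducible, hence separable with $D(f) \neq 0$, and its invariant order is the monogenic order $R_f = \Z[\alpha]$ generated by a root $\alpha \in K$. By Corollary~\ref{cor3.4}, two such $f,g$ are Hermite equivalent if and only if $R_f \cong R_g$; and since the unique $\Q$-algebra isomorphism $K_f = K \to K = K_g$ is the identity, this means $R_f = R_g$ as subrings of $K$. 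Thus $f \mapsto R_f$ sets up a bijection between the Hermite equivalence classes in $\MI(K)$ and the monogenic orders $\mathcal{O} \subset K$ with $\mathcal{O} \otimes_{\Z} \Q = K$.

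Next I would analyze the splitting condition. A Hermite class, corresponding to an order $\mathcal{O}$, splits into more than one $\GL_2(\Z)$-class exactly when the set $\{f \in \MI(K) : R_f = \mathcal{O}\}$ contains two polynomials $f,g$ that are not $\GL_2(\Z)$-equivalent. Since $\Z$-equivalence implies $\GL_2(\Z)$-equivalence (see \S\ref{sec-eqz}), such $f$ and $g$ are then also $\Z$-inequivalent. As the $\Z$-equivalence classes of monic polynomials with invariant order $\mathcal{O}$ are in bijection with the generators of $\mathcal{O}$ modulo the substitutions $\alpha \mapsto \pm\alpha + a$, the existence of two $\Z$-inequivalent such polynomials says precisely that $\mathcal{O}$ admits at least two essentially distinct monogenic generators, i.e., that $\mathcal{O}$ is \emph{multiply monogenic}.

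Finally I would apply~\cite[Theorem~1.2(iii)]{BEGy2013}, which asserts that a number field $K$ of degree $n \geq 5$ whose normal closure has Galois group $S_n$ contains only finitely many multiply monogenic orders. Combined with the bijection above, this forces only finitely many Hermite classes in $\MI(K)$ to split. The main obstacle, and the only nonformal point, is to match conventions with~\cite{BEGy2013}: I must check that ``multiply monogenic'' there is defined through the same equivalence $\alpha \mapsto \pm\alpha + a$ on generators (so that the reduction in the third paragraph is exact), and that the Galois hypothesis of \cite[Theorem~1.2(iii)]{BEGy2013} coincides with the one assumed here. The argument is robust to the precise choice of equivalence, however: even if \cite{BEGy2013} counts generators up to the coarser $\GL_2(\Z)$-equivalence, a splitting class already produces two $\GL_2(\Z)$-inequivalent generators, so the conclusion follows in either case.
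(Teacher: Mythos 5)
Your route is the same as the paper's---the paper offers no argument beyond declaring Theorem~\ref{ThmF} a direct consequence of \cite[Theorem~1.2(iii)]{BEGy2013}, and your reduction via Corollary~\ref{cor3.4} (Hermite classes in $\MI(K)$ correspond to monogenic orders of $K$, using triviality of $\on{Aut}(K/\Q)$) is exactly the intended translation. The genuine problem is the statement your main line attributes to \cite[Theorem~1.2(iii)]{BEGy2013}: that theorem does \emph{not} say that $K$ has only finitely many multiply monogenic orders, and that statement is in fact false for every $K$ satisfying the hypotheses of Theorem~\ref{ThmF}. Indeed, since $S_{n-1}$ is maximal in $S_n$, the field $K$ has no proper subfields, so every unit $\epsilon\in\OO_K^\times\setminus\{\pm1\}$ generates $K$; one checks that $\Z[\epsilon]=\Z[\epsilon^{-1}]$ while $\epsilon^{-1}\neq\pm\epsilon+a$ (else $\epsilon$ would be quadratic), and since a fixed order contains only finitely many unit generators (finitely many $\Z$-classes of generators by Gy\H{o}ry's theorem, and at most $4n$ units in each class), the orders $\Z[\epsilon]$ run through infinitely many distinct two times monogenic orders. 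Put differently, your main line would prove the false strengthening that only finitely many Hermite classes in $\MI(K)$ split into more than one \emph{$\Z$-equivalence} class: pairs such as $(\epsilon,\epsilon^{-1})$ are $\GL_2(\Z)$-related but not $\Z$-related. What \cite[Theorem~1.2(iii)]{BEGy2013} actually asserts is finiteness of the set of orders admitting a pair of generators that is neither of type~I (related by $\omega_2=(a\omega_1+b)/(c\omega_1+d)$ with $\bigl(\begin{smallmatrix}a&b\\c&d\end{smallmatrix}\bigr)\in\GL_2(\Z)$) nor of type~II (a mutual quadratic relation, the degree-$4$ phenomenon exploited in \S\ref{sec-infmon4}).

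Consequently your closing hedge is not a side remark but the entire proof, and it is essentially correct: a splitting Hermite class yields monic $f_\alpha,f_\beta$ with $\Z[\alpha]=\Z[\beta]=\OO$ that are not $\GL_2(\Z)$-equivalent, and since a type~I relation between $\alpha$ and $\beta$ would force $f_\alpha$ and $f_\beta$ to be $\GL_2(\Z)$-equivalent (the ``if'' direction of the footnote in \S\ref{section4}, which needs no Galois hypothesis), the pair $(\alpha,\beta)$ is not of type~I. One more step is needed to make this airtight: dispose of type~II pairs. If $\beta=P(\alpha)$ and $\alpha=Q(\beta)$ with $P,Q$ quadratic, then $Q(P(X))-X$ is a nonzero polynomial of degree $4$ vanishing at $\alpha$, which is impossible for $n\geq5$; hence for $n\geq5$ ``neither type~I nor type~II'' means exactly ``not of type~I,'' and the cited finiteness statement applies to $\OO$. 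With the hedge promoted to the main argument and this one-line degree count added, your derivation is complete and agrees with the one the paper leaves implicit.
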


\noindent
We note that the method of proof of Theorem~\ref{ThmF} used by B\'{e}rczes et al. is ineffective---i.e., it does not provide a method to compute the exceptional Hermite equivalence classes.

\section{Examples} \label{sec-alltheexamples}

In this section, we prove Theorem~\ref{thm-main3}(ii)--(v) by constructing the relevant infinite collections of polynomials that are Hermite equivalent but not $\on{GL}_2(\Z)$-equivalent.

\subsection{Infinite sequence of monic examples in degree $4$} \label{sec-infmon4}
Theorem \ref{ThmF}, and hence Conjecture \ref{conj3.6}, is false for $n=4$. Indeed, consider the polynomials $f_{r,s}(X)=(X^2-r)^2-X-s$ with $r,s\in\Zz$ such that $f_{r,s}$ is irreducible and the Galois group of the splitting field of $f_{r,s}$ is $S_4$. Let $K_{r,s}$ be the field generated by a zero of $f_{r,s}$. Kappe and Warren~\cite{KW1989} showed that such pairs $(r,s)$ exist, and that there are infinitely many distinct ones among the fields $K_{r,s}$.
B\'{e}rczes, Evertse, and Gy\H{o}ry~\cite{BEGy2013} showed that every field $K_{r,s}$ as above has the following properties:
\begin{itemize}[leftmargin=25pt]
\item[(i)] There are infinitely many pairs of algebraic integers $(\alpha_m ,\beta_m)$ ($m=1,2,\ldots$)
in $K_{r,s}$ such that $\Qq (\alpha_m)=\Qq (\beta_m)=K_{r,s}$, $\beta_m=\alpha_m^2+r_m$, $\alpha_m =\beta_m^2+s_m$ for certain $r_m,s_m\in\Zz$; and
\item[(ii)] There are infinitely many distinct orders among the $\Zz [\alpha_m]$ ($m=1,2,\ldots$).
\end{itemize}
Let $f_m$ be the (monic integral) minimal polynomial of $\alpha_m$ and $g_m$ that of $\beta_m$. Then we have the following result on Hermite equivalence of the polynomials $f_m$ and $g_m$:
\begin{thm} \label{thm-fmandgm}
The polynomials $f_m$ lie in infinitely many distinct Hermite equivalence classes. Moreover, for each $m$, the polynomials $f_m$ and $g_m$ are Hermite equivalent but not $\on{GL}_2(\Z)$-equivalent.
\end{thm}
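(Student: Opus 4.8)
The plan is to reduce everything to the monic case of our interpretation of Hermite equivalence (Corollary~\ref{cor3.4}) and to exploit that $K_{r,s}$, being a non-normal quartic field whose Galois closure has group $S_4$, has no nontrivial $\Q$-automorphisms. Since $f_m$ and $g_m$ are monic, Remark~\ref{rmk-princip} identifies their invariant orders as $R_{f_m}=\Z[\alpha_m]$ and $R_{g_m}=\Z[\beta_m]$. First I would dispose of Hermite equivalence: the relations $\beta_m=\alpha_m^2+r_m$ and $\alpha_m=\beta_m^2+s_m$ give $\beta_m\in\Z[\alpha_m]$ and $\alpha_m\in\Z[\beta_m]$, so in fact $\Z[\alpha_m]=\Z[\beta_m]$, i.e.\ $R_{f_m}=R_{g_m}$, and Corollary~\ref{cor3.4} immediately yields that $f_m$ and $g_m$ are Hermite equivalent.

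For the first assertion I would observe that two orders in $K_{r,s}$ are isomorphic as rings if and only if they are \emph{equal}: any ring isomorphism between them extends to a $\Q$-automorphism of the common fraction field $K_{r,s}$, and $\on{Aut}(K_{r,s}/\Q)$ is trivial for an $S_4$-quartic field. Combining this with property (ii)—that infinitely many of the orders $\Z[\alpha_m]=R_{f_m}$ are distinct—and with Corollary~\ref{cor3.4}, I conclude that the $f_m$ fall into infinitely many Hermite equivalence classes.

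The substantive part is the failure of $\on{GL}_2(\Z)$-equivalence. Suppose for contradiction that $g_m(X)=\pm(cX+d)^4 f_m\!\left(\tfrac{aX+b}{cX+d}\right)$ for some $\bigl(\begin{smallmatrix} a & b \\ c & d\end{smallmatrix}\bigr)\in\on{GL}_2(\Z)$. Then $\tfrac{a\beta_m+b}{c\beta_m+d}$ (well-defined, since $c\beta_m+d=0$ would force $c=d=0$ as $\beta_m\notin\Q$) is a root of $f_m$ lying in $K_{r,s}$. Because $\on{Aut}(K_{r,s}/\Q)$ is trivial, the roots of $f_m$ in $K_{r,s}$ are in bijection with $\on{Aut}(K_{r,s}/\Q)$, so $f_m$ has a \emph{unique} root in $K_{r,s}$, namely $\alpha_m$; hence $\alpha_m=\tfrac{a\beta_m+b}{c\beta_m+d}$. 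Clearing denominators and substituting $\alpha_m=\beta_m^2+s_m$ gives $c\beta_m^3+d\beta_m^2+(cs_m-a)\beta_m+(ds_m-b)=0$. Since $\beta_m$ has degree $4$ over $\Q$, the polynomial $cT^3+dT^2+(cs_m-a)T+(ds_m-b)$ must vanish identically, forcing $c=d=0$ and then $a=b=0$, contradicting $ad-bc=\pm1$.

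I expect the main obstacle to be exactly the justification of this root correspondence: translating $\on{GL}_2(\Z)$-equivalence of the two polynomials into the single Möbius relation $\alpha_m=\tfrac{a\beta_m+b}{c\beta_m+d}$ between the distinguished generators, rather than between some pair of conjugates. The crux is that an $S_4$-quartic field contains exactly one root of $f_m$, which is where the Kappe–Warren $S_4$-hypothesis is essential; this is the genuinely non-formal input, and without it the degree count could be evaded by passing to a conjugate of $\alpha_m$. Once the correspondence is pinned down, the final degree argument is routine, and the symmetric computation using $\beta_m=\alpha_m^2+r_m$ in place of $\alpha_m=\beta_m^2+s_m$ works identically.
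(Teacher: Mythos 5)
Your proposal is correct and follows essentially the same route as the paper: Corollary~\ref{cor3.4} together with $\Z[\alpha_m]=\Z[\beta_m]$ gives Hermite equivalence, the rigidity of the $S_4$-quartic field $K_{r,s}$ (trivial automorphism group) pins down the relevant root so that the M\"obius relation combined with $\alpha_m=\beta_m^2+s_m$ yields a degree contradiction, and Point (ii) plus the order-isomorphism criterion gives infinitely many classes. The only differences are cosmetic: you apply the M\"obius map to $\beta_m$ rather than to $\alpha_m$, and you make explicit the step (left implicit in the paper) that isomorphic orders inside $K_{r,s}$ must be equal.
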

\begin{proof}
The first claim follows from Point (ii) above in conjunction with Corollary~\ref{cor3.2}. As for the second claim, Point (i) above implies that $\Zz [\alpha_m]=\Zz [\beta_m]$, so by Corollary~\ref{cor3.4}, the polynomials $f_m$ and $g_m$
are Hermite equivalent for each $m$.
If $g_m$ is the translate of $f_m$ by $\bigl(\begin{smallmatrix}a&b\\ c&d \end{smallmatrix}\bigr) \in\GL_2(\Zz )$, then there exists a conjugate $\beta_m'$ of $\beta_m$ such that $\beta_m' = \medfrac{a\alpha_m+b}{c\alpha_m+d} \in \Q(\alpha_m) = K_{r,s} = \Q(\beta_m)$. But since the normal closure of $\Q(\beta_m)$ has Galois group
$S_4$, we must have that $\beta_m' = \beta_m$. Consequently,
\[
\alpha_m^2+r_m=\beta_m=\frac{a\alpha_m+b}{c\alpha_m+d} \Longrightarrow
(c\alpha +d)(\alpha_m^2+r_m)-a\alpha_m-b=0,\] but this is impossible because $\alpha_m$ is of degree $4$ and $a,b,c,d$ are not all
equal to $0$.
\end{proof}

\noindent The argument in the proof of Theorem~\ref{thm-fmandgm} above can be used to produce other pairs $(f,g)$ of primitive irreducible polynomials that are Hermite equivalent but not $\GL_2(\Zz )$-equivalent.
In the next subsection, we construct such pairs of polynomials in degrees $4$ and $5$. The examples we construct are in fact non-monic, unlike the example treated in Theorem~\ref{thm-fmandgm}.

\subsection{Infinite sequences of non-monic examples in degrees $4$ and $5$} \label{sec-infnonmon45}
We start with polynomials of degree $4$. Let $s,t\in\Z$ be such that $s\equiv 1\pmod{15}$ and 
$t\equiv 21\pmod{30}$, and let
$$f(X)=2X^4+8tX^2+2sX-2s^2+8t^2+t,$$
and observe that $f(X)\equiv 2X^4+2X+1\pmod{3}$, that $f(X)\equiv 2(X+1)(X+3)(X^2+X+2)\pmod{5}$ and that $f$ is primitive. These observations imply that $f$ is irreducible in $\Z[X]$ and that the Galois group of $f$ (as a subgroup of $S_4$) contains a transposition and a $4$-cycle and is thus $S_4$.

Now, let $\alpha$ be a zero of $f$, and let $\beta=\alpha+2\alpha^2$.
Then the minimal polynomial of $\beta$ is
\begin{align*}
g(X)= 2X^4&+32X^3t+(-16s^2+192t^2+12s+16t)X^2
\\
&
+(-128s^2t+512t^3-32s^2+32st+128t^2+2s+8t)X
\\
&
+(2s^2-8t^2-t)(16s^2-64t^2+8s-24t-1).
\end{align*}
A computation shows that
\[
\left(\begin{array}{c}
\beta^3\\
\beta^2\\
\beta\\
1
\end{array}\right)=U
\cdot
\left(\begin{array}{c}
\alpha^3\\
\alpha^2\\
\alpha\\
1
\end{array}\right),
\]
where
$$\tiny U=\left(\begin{array}{rrrr}
1-8s-48t & 8s^2+96t^2-12s-28t & 12s^2+32st-48t^2-6s-6t & -32s^2t+128t^3+6s^2-8t^2-3t\\
4 & -16t+1 & -4s & 4s^2-16t^2-2t\\
0 & 2 & 1 & 0\\
0 & 0 & 0 & 1
\end{array}\right).$$
For any $s,t\in\Z$, we have $\det U = 1$; i.e., $U\in \GL_4(\Z)$.
This means that $I_f(3) = I_g(3)$, so by Theorem~\ref{thm-equivalence}, the polynomials $f$ and $g$ are Hermite equivalent. One readily verifies using the argument at the end of the proof of Theorem~\ref{thm-fmandgm} that $f$ and $g$ are not $\on{GL}_2(\Z)$-equivalent.
%

We next consider polynomials of degree $5$. Let $s\in\Z$ be such that $s\equiv 71\pmod{110}$, and let
$$f(X)=2X^5+(-800s^2-278s-24)X+800s^2+253s+20.$$
Then observe that
\(
f(X)\equiv 2X^5+3X+3\pmod 5\), that \(f(X)\equiv 2X(X+8)(X+3)(X^2+9)\pmod{11}
\), and that $f$ is primitive. These observations imply that $f$ is irreducible in $\Z[X]$ and that the Galois group of $f$ (as a subgroup of $S_5$) contains a transposition and a $5$-cycle and is thus $S_5$.

Now, let $\alpha$ be a zero of $f$, and let $\beta=\alpha+2\alpha^2$.
Then the minimal polynomial of $\beta$ is
\begin{align*}
g(X) =2X^5&-32(16s+3)(25s+4)X^3+4(25s+4)(96s+13)X^2
\\
&+4(25s+4)(51200s^3+27392s^2+4944s+299)X
\\
& -(32s+5)(25s+4)(19200s^2+6272s+511).
\end{align*}
A computation shows that
\[\left(\begin{array}{c}
\beta^4\\
\beta^3\\
\beta^2\\
\beta\\
1
\end{array}\right)=U
\cdot
\left(\begin{array}{c}
\alpha^4\\
\alpha^3\\
\alpha^2\\
\alpha\\
1
\end{array}\right),\]
where
\[\tiny U=\left(\begin{array}{rrrrr}
6400s^2+2224s+193 & 6400s^2+2424s+224 & -3200s^2-712s-32 &  -6400s^2-1924s-144 & -3200s^2-1012s-80\\
6 & 1 & 3200s^2+1112s+96 & 1600s^2+656s+64 & -4800s^2-1518s-120\\
4 & 4 & 1 & 0 & 0\\
0 & 0 & 2 & 1 & 0\\
0 & 0 & 0 & 0 & 1
\end{array}\right).\]
For any $s\in\Z$, we have $\det U = 1$; i.e., $U\in \GL_5(\Z)$.
This means that $I_f(4) = I_g(4)$, so by Theorem~\ref{thm-equivalence}, the polynomials $f$ and $g$ are Hermite equivalent. Again, one readily verifies using the argument at the end of the proof of Theorem~\ref{thm-fmandgm} that $f$ and $g$ are not $\on{GL}_2(\Z)$-equivalent.
%
%

\subsection{Further monic examples in degrees $4$, $5$, and $6$}\label{section4}
We start by describing a general strategy by which one can construct examples of Hermite equivalence classes of polynomials that split into multiple $\on{GL}_2(\Z)$-equivalence or $\Z$-equivalence classes. For this, we require the following notation:
\begin{nota*}
Given an algebraic number $\alpha$, we denote by $f_{\alpha}\in\Zz [X]$
the primitive irreducible polynomial with positive leading coefficient
having $\alpha$ as a zero.
\end{nota*}
\noindent Recall that a number field $K$ is called \emph{monogenic} if its ring of integers $\OO_K$ can be expressed as $\OO_K = \Zz [\alpha ] = R_{f_\alpha}$. In this case, letting $n=[K:\Qq ]$, the elements
$1,\alpha\kdots\alpha^{n-1}$ form a $\Zz$-module basis of $\OO_K$, and we call this basis a \emph{power integral basis} of $K$. The elements $\alpha\in K$ with $\OO_K =\Zz [\alpha ]$ are precisely those
of discriminant $D(\OO_K)$.
By Corollary \ref{cor3.4}, the minimal polynomials $f_\alpha$ of these
elements $\alpha$ form a Hermite equivalence class.

Let $\alpha$ be an algebraic integer of degree $n\geq4$, and let $f_\alpha(X)=X^n+a_1X^{n-1}+\cdots+a_n\in\Z[X]$ be its minimal polynomial. Consider an element $\beta$ of $\Z[\alpha]$ such that $\Z[\alpha]=\Z[\beta]$. We want to decide whether $\alpha$ and $\beta$ are $\GL_2(\Z)$-equivalent, in the sense that $\beta =\medfrac{a\alpha +b}{c\alpha +d}$ for some
$\bigl(\begin{smallmatrix} a&b\\ c&d \end{smallmatrix}\bigr)\in\GL_2(\Zz )$.\footnote{Observe that two primitive irreducible polynomials $f,g \in \Z[X]$ are $\GL_2(\Zz )$-equivalent if and only if there are a zero $\alpha$ of $f$ and a zero $\beta$ of $g$
such that $\alpha$ and $\beta$ are $\GL_2(\Zz )$-equivalent.} To do this, we write $\beta$ in the form
\begin{equation}\label{3.1}
\beta=b_1+b_2\alpha+\cdots+b_n\alpha^{n-1} \quad \text{with} \quad b_1,\ldots,b_n\in\Z.
\end{equation}
Since $\Zz [\beta -b_1]=\Zz [\beta ]$ and $\beta -b_1$ is $\GL_2(\Zz )$-equivalent to $\alpha$ if and only if $\beta$ is,
we may assume that $b_1=0$. Then $\beta$ is $\GL_2(\Zz )$-equivalent to $\alpha$
if and only if
\begin{equation}\label{3.2}
(c\alpha+d)(b_2\alpha+\cdots+b_n\alpha^{n-1})-(a\alpha+b)=0
\end{equation}
for some $a,b,c,d\in\Z$ with $ad-bc=\pm1$. Representing the left-hand side of (\ref{3.2}) as a linear combination of $1,\alpha,\ldots ,\alpha^n$ and substituting in the relation $\alpha^n=-(a_1\alpha^{n-1}+\cdots+a_n)$, the coefficients of $1,\alpha,\ldots\alpha^{n-1}$ in the resulting expression must all be $0$. We therefore obtain the following system of linear equations in $a,b,c,d$:
\begin{equation}\label{3.3}
\begin{array}{lllllll}
&&&-&cb_na_n & = & b,\\
&& db_2& -&cb_na_{n-1} & = & a, \\
cb_2& +&db_3& -&cb_na_{n-2} & = & 0,\\
cb_3& +&db_4& -&cb_na_{n-3} & = & 0,\\
\vdots&&\vdots&&\vdots&&\vdots\\
cb_{n-2}& +&db_{n-1}& -&cb_na_2 & = & 0,\\
cb_{n-1}& +&db_n& -&cb_na_1 & = & 0.
\end{array}
\end{equation}
We conclude that $\beta$ is $\on{GL}_2(\Z)$-equivalent to $\alpha$ if and only if the system~\eqref{3.3} has a nonzero solution $(a,b,c,d)$ with $ad-bc=\pm 1$.

To determine how the Hermite equivalence class of $f_\alpha$ falls apart into $\on{GL}_2(\Z)$-equivalence classes, it now remains to determine the $\Z$-equivalence classes of elements $\beta \in \Z[\alpha]$ such that $\Z[\alpha] = \Z[\beta]$. This amounts to determining all power integral bases of $\Z[\alpha]$. As it happens, all such bases are explicitly known in several number fields $K$ of degrees $n=4$, $5$, and $6$. Owing to this fact, we can determine how the Hermite equivalence class of polynomials $f\in\MI (K)$ with $R_f = \OO_K$ splits into $\Zz$-equivalence classes and into $\GL_2(\Zz )$-equivalence classes. In the rest of this subsection, we will illustrate this using three concrete examples in degrees $4$, $5$, and $6$.

We start with a polynomial of degree $4$. Let $\alpha$ be a zero of the irreducible polynomial
$$f(X)=X^4-X^3-4X^2+2X+1.$$
Then $D(f)=3981$, which is squarefree, so $R_f$ is the maximal order in $K_f$. A full set of pairwise $\Z$-inequivalent $\beta$ with $R_f=\Z[\beta]$ is given by $\beta=b_2\alpha+b_3\alpha^2+b_4\alpha^3$, where $(b_2,b_3,b_4)$ are listed in Table~\ref{tab1} below; see Ga\'{a}l~\cite[p.~300, last line]{G2019}. Then, by solving the system of linear equations \eqref{3.3} for all pairs $\beta_i,\beta_j$, with $i,j=1,\ldots,10$, we obtain that $\{ \beta_1\kdots\beta_{10}\}$
splits into 3 $\GL_2(\Z)$-equivalence classes:
\[
\{\beta_1,\beta_5,\beta_8\},\ \
\{\beta_2,\beta_6,\beta_7,\beta_{10}\},\ \
\{\beta_3,\beta_4,\beta_9\}.
\]
\vspace*{-5pt}
\begin{figure}[!htbp]
\begin{center}
$$\begin{array}{|c||c|c|c|}
\hline
& b_2& b_3 & b_4\\\hline\hline
\beta_1 & -4 & 0 & 1\\\hline
\beta_2 & -2 & 1 & 0\\\hline
\beta_3 & -1 & 2 & 0\\\hline
\beta_4 & 0 & -1 & 1\\\hline
\beta_5 & 1 & 0 & 0\\\hline
\beta_6 & 1 & 1 & 0\\\hline
\beta_7 & 3 & 1 & -1\\\hline
\beta_8 & 4 & 1 & -1\\\hline
\beta_9 & 15 & 4 & -4\\\hline
\beta_{10} & 21 & 1 & -5\\\hline
\end{array}$$
\end{center}
\caption{The set of $\beta$ with $\Zz [\beta ]=\OO_K$ is the union
of the $\Zz$-equivalence classes represented by the 10 $\Z$-inequivalent elements
\(
\{\beta_1,\dots,\beta_{10}\},
\) with $\beta_5 = \alpha$.}
\label{tab1}
\end{figure}
Since the Galois group of $f$ is $S_4$, this implies that the Hermite
equivalence class of $f$ splits into $10$ $\Zz$-equivalence classes, represented by $f_{\beta_1}\kdots
f_{\beta_{10}}$, and $3$ $\GL_2(\Zz )$-equivalence classes, represented by
$f_{\beta_i}$ for $i = 1,2,3$.

We next consider a polynomial of degree $5$.
Let $\alpha$ be a zero of the irreducible polynomial
$$f(X)=X^5-5X^3+X^2+3X-1.$$
Then $D(f)=24217$, which is squarefree, so $R_f$ is the maximal order in $K_f$. A full set of pairwise $\Zz$-inequivalent $\beta$ with $R_f=\Zz [\beta]$ is given by $\beta=b_2\alpha+b_3\alpha^2+b_4\alpha^3+b_5\alpha^4$, where $(b_2,b_3,b_4,b_5)$ are listed in Table~\ref{tab2}; see Ga\'{a}l and Gy\H{o}ry~\cite[Example 1]{GGy1999}.

\begin{figure}[!htbp]
\begin{center}
\noindent$\begin{array}{|c||c|c|c|c|}
\hline
& b_2& b_3 & b_4 & b_5\\\hline\hline
\beta_{1} & 0 & 1 & 0 & 0\\\hline
\beta_{2} & 0 & 2 & 1 & -1\\\hline
\beta_{3} & 0 & 4 & 0 & -1\\\hline
\beta_{4} & 0 & 5 & 0 & -1\\\hline
\beta_{5} & 1 & -5 & 0 & 1\\\hline
\beta_{6} & 1 & -4 & 0 & 1\\\hline
\beta_{7} & 1 & -1 & 0 & 0\\\hline
\beta_{8} & 1 & 0 & 0 & 0\\\hline
\beta_{9} & 1 & 1 & -2 & -1\\\hline
\beta_{10} & 1 & 4 & 0 & -1\\\hline
\beta_{11} & 2 & -1 & -1 & 0\\\hline
\beta_{12} & 2 & 4 & -1 & -1\\\hline
\beta_{13} & 2 & 9 & -1 & -2\\\hline
\end{array}$\hspace{0.5cm}
$\begin{array}{|c||c|c|c|c|}
\hline
& b_2& b_3 & b_4 & b_5\\\hline\hline
\beta_{14} & 2 & 15 & -1 & -3\\\hline
\beta_{15} & 2 & 10 & -1 & -2\\\hline
\beta_{16} & 3 & 4 & -1 & -1\\\hline
\beta_{17} & 3 & 5 & -1 & -1\\\hline
\beta_{18} & 3 & 9 & -1 & -2\\\hline
\beta_{19} & 3 & 10 & -1 & -2\\\hline
\beta_{20} & 3 & 14 & -1 & -3\\\hline
\beta_{21} & 3 & 18 & -2 & -4\\\hline
\beta_{22} & 4 & -1 & -1 & 0\\\hline
\beta_{23} & 4 & 0 & -1 & 0\\\hline
\beta_{24} & 4 & 5 & -1 & -1\\\hline
\beta_{25} & 4 & 24 & -2 & -5\\\hline
\beta_{26} & 4 & 29 & -2 & -6\\\hline
\end{array}$\hspace{0.5cm}
$\begin{array}{|c||c|c|c|c|}
\hline
& b_2& b_3 & b_4 & b_5\\\hline\hline
\beta_{27} & 5 & -4 & -1 & 1\\\hline
\beta_{28} & 5 & 8 & -2 & -2\\\hline
\beta_{29} & 5 & 33 & -2 & -7\\\hline
\beta_{30} & 7 & 5 & -2 & -1\\\hline
\beta_{31} & 7 & 9 & -2 & -2\\\hline
\beta_{32} & 7 & 14 & -2 & -3\\\hline
\beta_{33} & 9 & 18 & -3 & -4\\\hline
\beta_{34} & 11 & -13 & -2 & 3\\\hline
\beta_{35} & 12 & 27 & -4 & -6\\\hline
\beta_{36} & 17 & 28 & -6 & -6\\\hline
\beta_{37} & 33 & 30 & -51 & -26\\\hline
\beta_{38} & 83 & 170 & -25 & -39\\\hline
\beta_{39} & 124 & 246 & -40 & -55\\\hline
\end{array}$\\
\end{center}
\caption{The set of $\beta$ with $\Zz [\beta ]=\OO_K$ is the union
of the $\Zz$-equivalence classes represented by the $39$ $\Z$-inequivalent elements $\{\beta_1,\ldots,\beta_{39}\}$, with $\beta_8 = \alpha$.}
\label{tab2}
\end{figure}
 Then, by solving the system of linear equations \eqref{3.3} for all pairs $\beta_i,\beta_j$, with $i,j=1,\ldots,10$, we obtain that $\{ \beta_1\kdots\beta_{39}\}$
splits into $10$ $\GL_2(\Z)$-equivalence classes:
\begin{align*}
&\{\beta_1,\beta_6,\beta_{14},\beta_{35}\},\ \
\{\beta_2,\beta_{12},\beta_{17},\beta_{19},\beta_{33},\beta_{37}\},\ \
\{\beta_3,\beta_{13},\beta_{25},\beta_{31}\},
\\
&\{\beta_4,\beta_{15},\beta_{18},\beta_{23}\},\ \
\{\beta_5,\beta_8,\beta_9,\beta_{16},\beta_{27}\},\ \
\{\beta_7,\beta_{11},\beta_{22},\beta_{39}\},
\\
&\{\beta_{10},\beta_{24},\beta_{26},\beta_{32}\},\ \
\{\beta_{20},\beta_{28},\beta_{29},\beta_{34}\},\ \
\{\beta_{21},\beta_{30}\},\ \
\{\beta_{36},\beta_{38}\}.
\end{align*}
Since the Galois group of $f$ is $S_5$, this implies that the Hermite
equivalence class of $f$ splits into $39$ $\Zz$-equivalence classes, represented by $f_{\beta_1}\kdots
f_{\beta_{39}}$, and $10$ $\GL_2(\Zz )$-equivalence classes, represented by
$f_{\beta_i}$ for $i=1,2,3,4,5,7,10,20,21,36$.

We finally consider a polynomial of degree $6$. Let $\alpha$ be a zero of the irreducible polynomial
$$f(X)=X^6-5X^5+2X^4+18X^3-11X^2-19X+1.$$
Then $D(f)=592661$, which is squarefree, so $R_f$ is the maximal order in $K_f$. A full set of pairwise $\Z$-inequivalent $\beta$ with $R_f=\Z[\beta]$ is given by $\beta=b_2\alpha+b_3\alpha^2+b_4\alpha^3+b_5\alpha^4+b_6\alpha^5$, where $(b_2,b_3,b_4,b_5,b_6)$ are listed in Table~\ref{tab3}; see Bilu, Ga\'{a}l, and Gy\H{o}ry~\cite[Example]{BGGy2004}.

\begin{figure}[!htbp]
\begin{center}
\noindent$\begin{array}{|c||c|c|c|c|c|}
\hline
& b_2& b_3 & b_4 & b_5& b_6\\\hline\hline
\beta_{1} & 1 & 0 & 0 & 0 & 0\\\hline
\beta_{2} & -1 & 1 & 0 & 0 & 0\\\hline
\beta_{3} & -2 & -2 & 1 & 0 & 0\\\hline
\beta_{4} & 2 & 7 & -2 & -3 & 1\\\hline
\beta_{5} & 4 & 9 & -3 & -3 & 1\\\hline
\beta_{6} & -4 & 12 & 0 & -4 & 1\\\hline
\beta_{7} & 5 & -1 & -3 & 1 & 0\\\hline
\beta_{8} & -5 & -5 & 4 & 2 & -1\\\hline
\beta_{9} & 5 & 6 & -2 & -3 & 1\\\hline
\beta_{10} & -5 & 9 & 1 & -4 & 1\\\hline
\beta_{11} & 5 & 9 & -3 & -3 & 1\\\hline
\beta_{12} & -6 & 2 & 3 & -1 & 0\\\hline
\beta_{13} & 6 & -5 & -2 & 1 & 0\\\hline
\beta_{14} & 6 & 8 & -3 & -3 & 1\\\hline
\beta_{15} & 7 & 1 & -4 & 1 & 0\\\hline
\beta_{16} & -7 & 6 & 2 & -1 & 0\\\hline
\beta_{17} & -7 & -6 & 5 & 2 & -1\\\hline
\beta_{18} & 8 & 10 & -4 & -3 & 1\\\hline
\beta_{19} & 9 & 10 & -4 & -3 & 1\\\hline
\beta_{20} & 10 & 0 & -4 & 1 & 0\\\hline
\beta_{21} & 10 & 8 & -6 & -2 & 1\\\hline
\beta_{22} & -10 & -17 & 6 & 6 & -2\\\hline
\beta_{23} & 11 & 3 & -8 & 2 & 0\\\hline
\end{array}$\hspace{0.5cm}
$\begin{array}{|c||c|c|c|c|c|}
\hline
& b_2& b_3 & b_4 & b_5 & b_6\\\hline\hline
\beta_{24} & -11 & -7 & 6 & 2 & -1\\\hline
\beta_{25} & -11 & -13 & 7 & 5 & -2\\\hline
\beta_{26} & -11 & 18 & 2 & -5 & 1\\\hline
\beta_{27} & 12 & 7 & -6 & -2 & 1\\\hline
\beta_{28} & -13 & -6 & 6 & 2 & -1\\\hline
\beta_{29} & 13 & 15 & -8 & -5 & 2\\\hline
\beta_{30} & -14 & -14 & 8 & 5 & -2\\\hline
\beta_{31} & 16 & 16 & -9 & -5 & 2\\\hline
\beta_{32} & 17 & 16 & -9 & -5 & 2\\\hline
\beta_{33} & 18 & 11 & -10 & -4 & 2\\\hline
\beta_{34} & 20 & 22 & -11 & -8 & 3\\\hline
\beta_{35} & 21 & -10 & -8 & 6 & -1\\\hline
\beta_{36} & 22 & 24 & -12 & -8 & 3\\\hline
\beta_{37} & 23 & 14 & -12 & -4 & 2\\\hline
\beta_{38} & -26 & -20 & 14 & 7 & -3\\\hline
\beta_{39} & 43 & 45 & -21 & -14 & 5\\\hline
\beta_{40} & -46 & -45 & 26 & 15 & -6\\\hline
\beta_{41} & 108 & 106 & -63 & -36 & 15\\\hline
\beta_{42} & -119 & -118 & 68 & 40 & -16\\\hline
\beta_{43} & 153 & -26 & -126 & 75 & -12\\\hline
\beta_{44} & 173 & 167 & -105 & -58 & 25\\\hline
\beta_{45} & -590 & -585 & 336 & 198 & -79\\\hline
\end{array}$
\end{center}
\caption{The set of $\beta$ with $\Zz [\beta ]=\OO_K$ is the union
of the $\Zz$-equivalence classes represented by the $45$ $\Z$-inequivalent elements $\{\beta_1,\ldots,\beta_{45}\}$, with $\beta_1 = \alpha$.}
\label{tab3}
\end{figure}
\noindent
Then, by solving the system of linear equations (\eqref{3.3} for all pairs $\beta_i,\beta_j$, with $i,j=1,\ldots,45$, we obtain that $\{ \beta_1\kdots\beta_{45}\}$
splits into $11$ $\GL_2(\Z)$-equivalence classes:
\begin{align*}
&\{\beta_{1},\beta_{19},\beta_{26},\beta_{35},\beta_{42}\},\ \
\{\beta_{2},\beta_{14},\beta_{20},\beta_{23},\beta_{30}\},\ \
\{\beta_{3},\beta_{4},\beta_{13},\beta_{40}\},
\\
&\{\beta_{5},\beta_{15},\beta_{18},\beta_{29},\beta_{38}\},\ \
\{\beta_{6},\beta_{21},\beta_{31},\beta_{39},\beta_{44}\},\ \
\{\beta_{7},\beta_{22},\beta_{33}\},
\\
&\{\beta_{8},\beta_{11},\beta_{24},\beta_{27},\beta_{45}\},\ \
\{\beta_{9},\beta_{28},\beta_{37}\},\ \
\{\beta_{10},\beta_{12},\beta_{36}\},
\\
&\{\beta_{16},\beta_{32},\beta_{34},\beta_{41},\beta_{43}\},\ \
\{\beta_{17},\beta_{15}\}.
\end{align*}
Since the Galois group of $f$ is $S_6$, this implies that the Hermite
equivalence class of $f$ splits into $45$ $\Zz$-equivalence classes, represented by $f_{\beta_1}\kdots
f_{\beta_{45}}$, and $11$ $\GL_2(\Zz )$-equivalence classes, represented by
$f_{\beta_i}$ for $i=1,2,3,5,6,7,8,9,10,16,17$.

\subsection{Reducible monic examples in arbitrary degree}

In this subsection, we prove the following theorem, which shows that it is easy to construct examples of reducible monic polynomials that are Hermite equivalent but not $\on{GL}_2(\Z)$-equivalent:

\begin{thm} \label{thm-redpols}
Let $n \geq 3$ be an integer, and let $f \in \MI(n)$ be such that $f(0) = 1$ and such that $f$ has trivial stabilizer in $\on{GL}_2(\Z)$. Then the monic reducible polynomials $Xf(X) \in \Z[X]$ and $X^{n+1}f\big(\frac{1}{X}\big) \in \Z[X]$ are Hermite equivalent but not $\on{GL}_2(\Z)$-equivalent.
\end{thm}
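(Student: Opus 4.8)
The plan is to set $f^{*}(X)\defeq X^{n}f(1/X)$, so that $X^{n+1}f(1/X)=Xf^{*}(X)$ and $f^{*}$ is the reversal of $f$. Since $f$ is monic with $f(0)=1$, the polynomial $f^{*}$ is again monic of degree $n$ with $f^{*}(0)=1$, and it is the $\GL_2(\Z)$-translate of $f$ by the swap matrix $\sigma=\bigl(\begin{smallmatrix}0&1\\1&0\end{smallmatrix}\bigr)$ (indeed $\sigma\cdot f=X^{n}f(1/X)=f^{*}$ in the notation of \S\ref{sec-eqgl2z}); in particular $f^{*}$ is irreducible, and since $\alpha^{-1}=-(\alpha^{n-1}+a_{1}\alpha^{n-2}+\cdots+a_{n-1})\in\Z[\alpha]$ (using $f(0)=1$), one has $R_{f^{*}}\cong\Z[\alpha^{-1}]=\Z[\alpha]=R_{f}$. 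Both $Xf$ and $Xf^{*}$ are monic of degree $n+1$; they are separable, hence of nonzero discriminant, because $f,f^{*}$ are irreducible and $0$ is a root of neither $f$ nor $f^{*}$. Thus, by Corollary~\ref{cor3.4}, the Hermite equivalence will follow once I show that the invariant orders $R_{Xf}$ and $R_{Xf^{*}}$ are isomorphic.

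For this I would compute $R_{Xf}$ explicitly. Writing $K_{Xf}=\Q[X]/(Xf)=\Q\times K_{f}$ via the Chinese remainder theorem (valid as $f(0)\neq0$), the image $\theta$ of $X$ equals $(0,\alpha)$, and $R_{Xf}=\Z\langle 1,\theta,\dots,\theta^{n}\rangle$ by Remark~\ref{rmk-princip}. The crux is that $f(0)=1$ forces both idempotents of $\Q\times K_{f}$ into $R_{Xf}$: from $\alpha^{n}=-(a_{1}\alpha^{n-1}+\cdots+a_{n})$ with $a_{n}=f(0)=1$, the element $(0,1)$ equals the integral combination $-\theta^{n}-a_{1}\theta^{n-1}-\cdots-a_{n-1}\theta\in R_{Xf}$, whence $(1,0)=1-(0,1)\in R_{Xf}$ as well. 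Splitting along these idempotents then gives $R_{Xf}=\Z\times R_{f}$. Running the identical computation for $f^{*}$ gives $R_{Xf^{*}}\cong\Z\times R_{f^{*}}\cong\Z\times R_{f}$, so $R_{Xf}\cong R_{Xf^{*}}$ and the two polynomials are Hermite equivalent.

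For the non-equivalence, suppose toward a contradiction that $\gamma=\bigl(\begin{smallmatrix}a&b\\c&d\end{smallmatrix}\bigr)\in\GL_2(\Z)$ satisfies $Xf^{*}(X)=\pm(cX+d)^{n+1}(Xf)\bigl(\tfrac{aX+b}{cX+d}\bigr)$. First I would pass to roots in $\P^{1}(\overline{\Q})$: those of $Xf$ are $0,\alpha_{1},\dots,\alpha_{n}$ and those of $Xf^{*}$ are $0,\alpha_{1}^{-1},\dots,\alpha_{n}^{-1}$, and the associated Möbius map $z\mapsto(az+b)/(cz+d)$ carries the root set of $Xf^{*}$ onto that of $Xf$. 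Since $n\geq3$, all $\alpha_{i}$ and $\alpha_{i}^{-1}$ are irrational, so $0$ is the unique rational point in each root set; as the map is defined over $\Q$, it must send $0$ to $0$ (its value at $0$ cannot be $\infty$, as $Xf$ has no root at infinity), forcing $b=0$ and $\det\gamma=ad=\pm1$. With $b=0$, a direct expansion gives $\pm(cX+d)^{n+1}(Xf)\bigl(\tfrac{aX}{cX+d}\bigr)=\pm aX(cX+d)^{n}f\bigl(\tfrac{aX}{cX+d}\bigr)=\pm aX\,(\gamma\cdot f)(X)$, and comparing with $Xf^{*}=Xf^{*}$ yields $\gamma\cdot f=\pm f^{*}=\pm\,\sigma\cdot f$. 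Hence $\gamma\sigma$ lies in the stabilizer of $f$, which is $\{\pm I\}$ by hypothesis, so $\gamma=\pm\sigma^{-1}=\pm\sigma$; but $\pm\sigma$ has off-diagonal entries $\pm1$, contradicting $b=0$. Therefore $Xf$ and $Xf^{*}$ are not $\GL_2(\Z)$-equivalent.

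The main obstacle is this non-equivalence step, whose bookkeeping requires care on two points. First, one must fix the $\GL_2(\Z)$-action convention of \S\ref{sec-eqgl2z} and use its contravariance (the relation $t(\gamma_{1}\gamma_{2})=t(\gamma_{2})t(\gamma_{1})$ from Theorem~\ref{Prop2.1}) to be sure that it is $\gamma\sigma$, rather than $\sigma\gamma$, that lands in the stabilizer of $f$. Second, one must rule out the degenerate case $d=0$, which is excluded because it would force the rational root $0$ of $Xf^{*}$ to map to $\infty$, impossible since the degree-$(n+1)$ polynomial $Xf$ has no root at infinity. By contrast, the Hermite-equivalence half is routine once one isolates the key observation that $f(0)=\pm1$ is precisely the condition under which both idempotents of $\Q\times K_{f}$ lie in $R_{Xf}$, so that $R_{Xf}$ splits as $\Z\times R_{f}$.
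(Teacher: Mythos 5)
Your proof is correct and takes essentially the same route as the paper's: Hermite equivalence via Corollary~\ref{cor3.4} by showing both invariant orders split as $\Z \times R_f$ (the paper deduces this splitting from $\on{Res}(X,f(X)) = \on{Res}\big(X, X^n f\big(\frac{1}{X}\big)\big) = 1$, you verify it directly by exhibiting the integral idempotents, which is the same fact), and $\GL_2(\Z)$-inequivalence by the same argument that any equivalence must fix the unique rational root/factor $X$, hence have $b=0$, and then composition with the swap $\sigma$ yields a nontrivial element of the stabilizer of $f$, a contradiction. No gaps.
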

\begin{proof}
Let $g(X) = Xf(X)$ and $h(X) = X^{n+1}f\big(\frac{1}{X}\big)$. To prove that $g$ and $h$ are Hermite equivalent, it suffices by Corollary~\ref{cor3.4} to prove that $R_g$ and $R_h$ are isomorphic. But since both the leading and constant coefficients of $f$ are equal to $1$, we have that $\on{Res}(X, f(X)) = \on{Res}\big(X,X^nf\big(\frac{1}{X}\big)\big) = 1$. It follows that $R_g$ and $R_h$ are both isomorphic to $\Z \times R_f$, as desired.

Now, if $g$ and $h$ are $\on{GL}_2(\Z)$-equivalent, then the fact that $f$ has no rational roots implies that $h$ is the translate of $g$ by an element $\gamma \in \on{GL}_2(\Z)$ such that $\gamma$ sends $X \mapsto X$ and $f(X) \mapsto X^nf\big(\frac{1}{X}\big)$. But any $\gamma$ stabilizing $X$ is upper-triangular, so $\gamma \cdot \bigl(\begin{smallmatrix} 0 & 1 \\ 1 & 0 \end{smallmatrix}\bigr)$ is a nontrivial transformation stabilizing $f$, which is a contradiction.
\end{proof}

\subsection{Irreducible monic and non-monic examples in arbitrary degree}\label{section5}

In the previous subsections, we gave examples of polynomials of degree $4$, $5$, and $6$ that are Hermite equivalent but not $\on{GL}_2(\Z)$-equivalent. In this subsection, we extend this to every degree $\geq 4$. Our result is as follows:

\begin{thm} \label{thm-ftcandgtc}
We have the following two points:
\begin{itemize}[leftmargin=25pt]
    \item[$(i)$] For every integer $n \geq 4$, there exists an infinite collection of Hermite equivalence classes, each containing two polynomials $f,g \in \PI(n)$ that are properly non-monic $($i.e., not $\on{GL}_2(\Z)$-equivalent to monic polynomials$)$ and not $\on{GL}_2(\Z)$-equivalent.
    \item[$(ii)$] For every integer $n \geq 4$, there exists an infinite collection of Hermite equivalence classes, each containing two polynomials $f,g \in \MI(n)$ that are not $\on{GL}_2(\Z)$-equivalent.
\end{itemize}
\end{thm}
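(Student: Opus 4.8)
The plan is to exhibit, for each integer $n\ge 4$, two explicit parametrized families of degree-$n$ polynomials---one monic (for part (ii)) and one non-monic (for part (i))---each coming with a companion polynomial, and then to verify the three required properties (Hermite equivalence, non-$\on{GL}_2(\Z)$-equivalence, and, for (i), proper non-monicity) using the machinery already developed. In both parts I would take a root $\alpha$ of $f$ and set $\beta=\alpha+2\alpha^2$, with $g$ the minimal polynomial of $\beta$ (taken monic in part (ii), primitive in part (i)), mimicking the degree $4$ and $5$ constructions of \S\ref{sec-infnonmon45}. For part (ii) I would choose $f\in\MI(n)$ so that $\Z[\beta]=\Z[\alpha]$; then $R_g\cong R_f$, and Corollary~\ref{cor3.4} gives that $f$ and $g$ are Hermite equivalent. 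For part (i) I would choose a non-monic $f\in\PI(n)$ and produce an explicit matrix $U\in\GL_n(\Z)$ with $(\beta^{n-1},\ldots,\beta,1)^T=U\,(\alpha^{n-1},\ldots,\alpha,1)^T$; this shows $I_f(n-1)=I_g(n-1)$, so by Theorem~\ref{thm-equivalence} (with $\kappa=1$ and the identity isomorphism) $f$ and $g$ are Hermite equivalent.

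In either case I would force $f$ to be irreducible with Galois group $S_n$ by prescribing its factorization modulo two small primes (one giving an $n$-cycle, the other a transposition, which together generate $S_n$), and I would let the parameter run so that $|D(f)|\to\infty$. Since Hermite equivalent polynomials have isomorphic invariant orders and hence equal discriminant (Corollaries~\ref{cor3.2} and~\ref{cor-discs2}), infinitely many distinct discriminant values along the family produce infinitely many distinct Hermite equivalence classes. The \emph{non-$\on{GL}_2(\Z)$-equivalence} would then follow by the argument at the end of the proof of Theorem~\ref{thm-fmandgm}: if $f$ and $g$ were $\on{GL}_2(\Z)$-equivalent, some conjugate $\beta'$ of $\beta$ would satisfy $\beta'=\tfrac{a\alpha+b}{c\alpha+d}\in K_f$ for some $\bigl(\begin{smallmatrix}a&b\\c&d\end{smallmatrix}\bigr)\in\GL_2(\Z)$; since the Galois closure of $K_f$ has group $S_n$, the field $K_f$ has no nontrivial automorphism, so $\beta'=\beta$. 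This yields $(c\alpha+d)(\alpha+2\alpha^2)=a\alpha+b$, a polynomial relation in $\alpha$ of degree at most $3<n$, forcing $a=b=c=d=0$ and contradicting $ad-bc=\pm1$.

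For part (i) I must also show that $f$ (equivalently $g$) is \emph{properly} non-monic. By Theorem~\ref{thm-everything}(vii), $\on{GL}_2(\Z)$-equivalence preserves the ideal class of the invariant ideal, and a monic polynomial has principal invariant ideal; hence it suffices to arrange that $I_f$ is non-principal in $R_f$. Note that $[I_f]=[I_g]$ by Corollary~\ref{cor-ringeqprim}, so the two non-monicity conditions coincide and it is enough that this common class be nontrivial. I would enforce this through the shape of $f$---for instance a suitable congruence/ramification condition at an auxiliary prime guaranteeing that the class of $I_f$ in $\mathrm{Cl}(R_f)$ is nontrivial, equivalently that the homogenization of $f$ never primitively represents $\pm1$.

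I expect the \textbf{main obstacle} to be the uniform-in-$n$ verification that the change of basis is unimodular: reducing the high powers $\alpha^n,\ldots,\alpha^{2n-2}$ modulo the minimal polynomial of $\alpha$ introduces all of the coefficients of $f$ into $U$, so the families must be engineered (sparse, with coefficients prescribed as explicit polynomials in the parameters) so that $\det U\equiv\pm1$ identically---while simultaneously meeting the irreducibility, $S_n$, infinitude, and (for part (i)) non-principality requirements. Establishing all of these constraints at once in a single family valid for every $n\ge4$ is the delicate part; the remaining verifications reduce to invoking Corollary~\ref{cor3.4}, Theorem~\ref{thm-equivalence}, and the $S_n$ argument above.
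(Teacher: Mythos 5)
Your outline reproduces the high-level architecture of the paper's proof (take $\beta$ quadratic in $\alpha$, prove Hermite equivalence via $I_f(n-1)=I_g(n-1)$ or Corollary~\ref{cor3.4}, get infinitude from unbounded discriminants), but the actual mathematical content of the theorem is precisely the part you defer. The statement ``I would choose $f\in\MI(n)$ so that $\Z[\beta]=\Z[\alpha]$'' and the admission that engineering $\det U=\pm1$ uniformly in $n$ is the ``main obstacle'' together mean no proof has been given: for a generic $f$, the module $\Z\langle 1,\beta,\dots,\beta^{n-1}\rangle$ is a proper submodule of $I_f(n-1)$, and producing even one family per degree where equality holds is the whole difficulty. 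The paper overcomes it with a specific construction from Catalan numbers: the partial sums $a^{(n)}$ of $C(X)=(1-\sqrt{1-4X})/(2X)$, together with the identity $C(X-X^2)=1/(1-X)$, yield polynomials $k^{(n)}, b^{(n)}$ and families $f_{t,c}^{(n)}(X)=cX^n+t\,k^{(n)}(cX)$, $g_{t,c}^{(n)}$ satisfying $\widetilde{g}(X-X^2)=\widetilde{f}(X)\widetilde{f}(1-X)$; crucially there is an \emph{integral} polynomial $p_{t,c}^{(n)}$ with $p_{t,c}^{(n)}(\beta)=\alpha$ (Lemma~\ref{alphabeta}), which combined with Lemma~\ref{fqr} gives both inclusions $I_f(n-1)=I_g(n-1)$ (Proposition~\ref{gln}), and Eisenstein at $t$ gives irreducibility. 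Nothing in your proposal substitutes for this.

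Two of your auxiliary steps also fail as stated. First, a transposition and an $n$-cycle do \emph{not} generate $S_n$ for composite $n$ (inside $S_4$ they can generate the dihedral group of order $8$), and, more fundamentally, once a family is rigid enough to force $I_f(n-1)=I_g(n-1)$ its coefficients are determined polynomials in the parameters, so you cannot freely prescribe factorizations modulo auxiliary primes; indeed the paper never establishes the Galois group of $f_{t,c}^{(n)}$ at all. Instead it proves $k^{(n)}$ is irreducible (a genuinely hard Newton polygon/Dumas argument, Lemma~\ref{kn-irreducible}), uses Chebotarev to find a prime $p>C_{n-1}$ modulo which $k^{(n+1)}$ has no root, and shows that a $\GL_2(\Z)$-equivalence between $f_{t,c}^{(n)}$ and $g_{t,c}^{(n)}$ would force $k^{(n+1)}$ to have a root mod $p$ (Proposition~\ref{nogl2}); your $S_n$-closure argument is simply unavailable here. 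Second, for proper non-monicity your implication ``$I_f$ non-principal $\Rightarrow$ properly non-monic'' is correct, but non-triviality of a class in $\mathrm{Cl}(R_f)$ cannot be imposed by ``congruence/ramification conditions'' across an infinite family in every degree --- class groups of the varying orders $R_f$ are not controlled this way (the paper's single quartic instance, Theorem~\ref{thm-example}, required explicit computation). The paper's route is elementary and actually checkable: every $\GL_2(\Z)$-translate of $f_{t,c}^{(n)}$ has leading coefficient $\equiv t d^n \pmod c$, so choosing $t \bmod c$ outside the proper subgroup $S_{n,c}=\{\pm r^n\}\subset\Ff_c^*$ (with $c\equiv 1 \bmod n$ prime) excludes $\pm1$ (Lemma~\ref{gl2monic}). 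In short: the proposal is a plausible plan whose three load-bearing steps --- the construction, the inequivalence argument, and the non-monicity argument --- are each either missing or incorrect.
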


\noindent More precisely, we give, for every integer $n\geq 4$,
an infinite parametric family of pairs of polynomials $(f_{t,c}^{(n)},g_{t,c}^{(n)})$ in $\PI (n)$
where $c$ runs through $1$ and an infinite set of primes and $t$ runs through an infinite set of primes,
with the following properties:
\begin{equation}\label{5.prop}
\left\{\begin{split}
&f_{t,c}^{(n)},g_{t,c}^{(n)}\ \text{have leading coefficient $c$ and are properly non-monic if $c>1$;}
\\
&f_{t,c}^{(n)},g_{t,c}^{(n)}\ \text{are Hermite equivalent but not $\GL_2(\Zz )$-equivalent.}
\end{split}\right.
\end{equation}
In fact, the polynomials $f_{t,c}^{(n)}$ and $g_{t,c}^{(n)}$ will be such that
if we fix $n$ and $c$ and let $t\to\infty$ then the absolute values of the
discriminants of $f_{t,c}^{(n)}$ and $g_{t,c}^{(n)}$ tend to $\infty$.
Since Hermite equivalent polynomials have the same discriminant by Corollary~\ref{cor-discs2},
the pairs $(f_{t,c}^{(n)},g_{t,c}^{(n)})$ lie in infinitely many different Hermite equivalence classes.

\subsubsection{Construction of the polynomials $f_{t,c}^{(n)}$ and $g_{t,c}^{(n)}$} Consider the formal power series in $X$,
\begin{equation}\label{c-id}
C(X)=\frac{1-\sqrt{1-4X}}{2X}=(2X)^{-1}\left(1-\sum_{i=0}^{\infty}\binom{1/2}{i}(-4X)^i\right)
=\sum_{i=0}^{\infty} C_iX^i
\end{equation}
where
$$C_i=\frac{1}{i+1}\cdot \binom{2i}{i}.$$
Recall that $C_i$ is the $i^{\mathrm{th}}$ \emph{Catalan number} and is an integer for each $i$ (see, e.g., Stanley~\cite{stanley}). Next, let $n\geq 4$ be an integer, and let $a^{(n)}(X)$ the $(n-2)$-th partial sum of $C(X)$; i.e., let
$$a^{(n)}(X)=\sum_{i=0}^{n-2}C_i\cdot X^{i}\in\mathbb{Z}[X].$$
Since $C(X)$ satisfies the equation
$$X\cdot C(X)^2-C(X)+1=0,$$
the coefficients of $X^k$ $(k=0,\ldots,n-2)$ in $X\cdot (a^{(n)}(X))^2$ and in $a^{(n)}(X)-1$ are the same. Thus, as polynomials in $\mathbb{Z}[X]$, we have that
\begin{equation}\label{xnf}
X^{n-1}\mid X\cdot (a^{(n)}(X))^2-a^{(n)}(X)+1.
\end{equation}
Let
$$b^{(n)}(X) \defeq \frac{X\cdot (a^{(n)}(X))^2- a^{(n)}(X)+1}{X^{n-1}}.$$
By \eqref{xnf}, we have that $b^{(n)}(X)$ is a polynomial in $\Z[X]$ of degree $n-2$.
Then, substituting $X-X^2$ for $X$ in \eqref{c-id} we obtain
$$C(X-X^2)=\frac{1-\sqrt{(1-2X)^2}}{2(X-X^2)}=\frac{1}{1-X}.$$
Using again that the coefficients of $X^k$ $(k=0,\ldots,n-2)$ in $(1-X)\cdot a^{(n)}(X-X^2)$ and in $(1-X)\cdot C(X-X^2)=1$ are the same, we find that
\begin{equation}\label{xnfx2x}
X^{n-1}\mid (1-X)\cdot a^{(n)}(X-X^2)-1.
\end{equation}
Let
\begin{align*}
h^{(n)}(X) & \defeq \frac{(1-X)\cdot a^{(n)}(X-X^2)-1}{X^{n-1}} \\
\intertext{and}
k^{(n)}(X) & \defeq -h^{(n)}(1-X)=\frac{1-X\cdot a^{(n)}(X-X^2)}{(1-X)^{n-1}}.
\end{align*}
By \eqref{xnfx2x} $h^{(n)}(X)$ and $k^{(n)}(X)$ are polynomials in $\Z[X]$ of degree $n-2$.
It is easy to check that
\begin{align}\label{ah-relation}
(X-X^2)\cdot a^{(n)}(X-X^2)& =X + h^{(n)}(X)\cdot X^n, \quad \text{and}
\\
\label{bhk-relation}
b^{(n)}(X-X^2)& = -h^{(n)}(X)\cdot k^{(n)}(X).
\end{align}
Now, let $c$ be either $1$ or a prime, and let $t$ be a prime different from $c$. Define the
polynomials
\begin{align*}
&\widetilde{f_{t,c}^{(n)}}(X) \defeq X^n+c^{n-1}t\cdot k^{(n)}(X),
\\
&\widetilde{g_{t,c}^{(n)}}(X) \defeq X^n+c^{n-1}t(1-2Xa^{(n)}(X))+(c^{n-1}t)^2\cdot b^{(n)}(X).
\end{align*}
Notice that, by \eqref{ah-relation} and \eqref{bhk-relation}, we have
\begin{align}\label{fg-relation}
\widetilde{g_{t,c}^{(n)}}(X-X^2)=\widetilde{f_{t,c}^{(n)}}(X)\cdot\widetilde{f_{t,c}^{(n)}}(1-X).
\end{align}
Subsequently, define the polynomials
\begin{align}\label{f-definition}
&f_{t,c}^{(n)}(X) \defeq c^{1-n}\widetilde{f_{t,c}^{(n)}}(cX)=cX^n+t\cdot k^{(n)}(cX),
\\
\label{g-definition}
&g_{t,c}^{(n)}(X) \defeq c^{1-n}\widetilde{g_{t,c}^{(n)}}(cX)=cX^n+t(1-2cXa^{(n)}(cX))+c^{n-1}t^2\cdot b^{(n)}(cX).
\end{align}

\subsubsection{Verifying primitivity and irreducibility}
From the definitions it is clear that both $f_{t,c}^{(n)}$, $g_{t,c}^{(n)}$
are polynomials in $\Z[X]$ of degree $n$ and leading coefficient $c$. Next, since $k^{(n)}(0)=1$, the constant term
of $f_{t,c}^{(n)}$ is $t$. So $f_{t,c}^{(n)}$ is primitive. Furthermore, by Eisenstein's criterion applied at the prime $t$, we see that $f_{t,c}^{(n)}$ is irreducible. The constant term of $g_{t,c}^{(n)}$ is $t$ mod $c^{n-1}t^2$, so $g_{t,c}^{(n)}$ is also primitive, and once again, Eisenstein's criterion implies that it is also irreducible.

\begin{lem}\label{alphabeta}
The polynomials $f_{t,c}^{(n)}$ and $g_{t,c}^{(n)}$ are the minimal polynomials of algebraic numbers $\alpha$ and $\beta$, respectively, such that
$\beta = \alpha -c\alpha^2$. Further, we have $p_{t,c}^{(n)}(\beta )=\alpha$, where
\[
p^{(n)}_{t,c}(X) \defeq X\cdot a^{(n)}(cX)-c^{n-2}t\cdot b^{(n)}(cX).
\]
\end{lem}

\begin{proof} 
The polynomials $f_{t,c}^{(n)}$ and $g_{t,c}^{(n)}$ are primitive and irreducible, so they are the minimal polynomials of certain algebraic numbers. Choose a zero $\alpha$ of $f_{t,c}^{(n)}$,
and put $\beta \defeq \alpha -\alpha^2$.
Note that $\widetilde{f_{t,c}^{(n)}}(c\alpha )=0$, so by \eqref{fg-relation}
we have $\widetilde{g_{t,c}^{(n)}}(c\alpha -(c\alpha )^2)=0$. This implies $g_{t,c}^{(n)}(\beta )=0$. 
This proves the first claim.

As for the second claim, we have by \eqref{ah-relation}, \eqref{bhk-relation}, and \eqref{f-definition} that
\[
\begin{split}
p_{t,c}^{(n)}(X-cX^2) &= c^{-1}(cX-(cX)^2)a^{(n)}(cX-(cX)^2)-c^{n-2}tb^{(n)}(cX-(cX)^2)
\\
&=X+c^{n-1}h^{(n)}(cX)X^n+c^{n-2}th^{(n)}(cX)\cdot k^{(n)}(cX)
\\
&=X+c^{n-2}h^{(n)}(cX)f_{t,c}^{(n)}(X),
\end{split}
\]
and by substituting $X=\alpha$ we get $p_{t,c}^{(n)}(\beta )=\alpha$.
\end{proof}
\noindent We note that the discriminants of $f_{t,c}^{(n)}$ and $g_{t,c}^{(n)}$ are polynomials in $t$ and $c$ that for any fixed value of $c$ tend to $\infty$ with $t$.

\subsubsection{Verifying Hermite equivalence}

We next show that $f_{t,c}^{(n)}$ and $g_{t,c}^{(n)}$ are Hermite equivalent. We first
prove a preparatory lemma.

\begin{lem}\label{fqr}
Take $f\in \PI(n)$ with leading coefficient $c$, and let $\gamma \in K_f$ be a root of $f$. Further, take $s \in \Z[X]$, and let $p(X)=Xs(cX)$.
Then $p(\gamma )^k\in I_f(n-1) = \Z\langle 1, \gamma, \dots, \gamma^{n-1} \rangle$ for each $k=0\kdots n-1$.
\end{lem}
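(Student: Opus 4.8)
The plan is to \emph{avoid} expanding $p(\gamma)^k$ directly and reducing modulo $f$: doing so would force a division by the leading coefficient $c$ every time a power $\gamma^m$ with $m \ge n$ appears, and it would not be transparent that the result lands in the $\Z$-module $I_f(n-1)$ (which, since $f$ is non-monic when $c>1$, is \emph{not} a ring). Instead I would exploit two structural facts recorded in Theorem~\ref{thm-everything}: that $R_f$ is a ring containing $c\gamma$, and that $I_f(n-1)$ is an $R_f$-module.

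First I would record the relevant factorization. Writing $s(Y)=\sum_j s_j Y^j$ with $s_j\in\Z$, we have $p(X)=X\,s(cX)=\sum_j s_j c^j X^{j+1}$, so that $p(\gamma)=\gamma\cdot s(c\gamma)$. The key observation is that $c\gamma$ is precisely the basis element $\zeta_1$ of $R_f$ from~\eqref{eq-zetadef}; since $R_f$ is a ring (Theorem~\ref{thm-everything}(i)) and $s$ has integer coefficients, the element $q\defeq s(c\gamma)$ lies in $R_f$, and hence so does every power $q^k$.

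Next, since $K_f$ is a field (as $f$ is irreducible) and in particular commutative, we have $p(\gamma)^k=\gamma^k q^k$. For $0\le k\le n-1$ the power $\gamma^k$ is one of the displayed $\Z$-basis elements of $I_f(n-1)=\Z\langle 1,\gamma,\dots,\gamma^{n-1}\rangle$, so $\gamma^k\in I_f(n-1)$. By Theorem~\ref{thm-everything}(iii), $I_f(n-1)$ is a fractional ideal of $R_f$, i.e.\ an $R_f$-submodule of $K_f$; multiplying the element $\gamma^k\in I_f(n-1)$ by $q^k\in R_f$ therefore keeps us inside $I_f(n-1)$. This yields $p(\gamma)^k=q^k\gamma^k\in R_f\cdot I_f(n-1)\subseteq I_f(n-1)$, as desired.

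The only subtlety---and where the hypothesis $p(X)=X\,s(cX)$ is genuinely used---is the factorization $p(\gamma)=\gamma\,s(c\gamma)$: the special shape of $p$ is exactly what guarantees that $p(\gamma)$ equals $\gamma$ times an element of the invariant order $R_f$, since the leading coefficient $c$ is absorbed into the argument $c\gamma=\zeta_1\in R_f$. Once this is in place, the conclusion is immediate from the module structure, so I do not anticipate any real computational obstacle; the main point to get right is conceptual, namely to recognize that $I_f(n-1)$ need not be a ring but \emph{is} an $R_f$-module, and to argue via that module structure rather than by a direct reduction modulo $f$.
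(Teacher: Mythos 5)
Your proof is correct, but it takes a genuinely different route from the paper's. You argue structurally: the factorization $p(\gamma)=\gamma\,s(c\gamma)$ exhibits $p(\gamma)^k=\gamma^k s(c\gamma)^k$ as the product of the basis element $\gamma^k\in I_f(n-1)$ with a power of $s(c\gamma)$, which lies in $R_f$ because $c\gamma=\zeta_1$ and $R_f$ is a ring (Theorem~\ref{thm-everything}(i)); the conclusion then follows from $I_f(n-1)$ being an $R_f$-module (Theorem~\ref{thm-everything}(iii)). The paper instead gives a self-contained computational argument: first, by induction and division with remainder over $\Z$, it shows that $c^{i-n+1}\gamma^i\in I_f(n-1)$ for every $i\geq n$; second, it observes that $(c\,p(X))^k$ is a polynomial in $cX$, so the coefficient of $X^i$ in $p(X)^k$ is divisible by $c^{i-k}$, hence by $c^{i-n+1}$ whenever $k\leq n-1$; combining the two facts gives the lemma. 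Your approach is shorter and makes conceptually transparent why the special shape $p(X)=Xs(cX)$ is exactly the right hypothesis, but it leans on the imported, nontrivial facts that $R_f$ is closed under multiplication and that $I_f(n-1)$ is an $R_f$-module --- results of Birch--Merriman, Nakagawa, and Wood that the paper cites in Theorem~\ref{thm-everything}. The paper's proof uses nothing beyond divisibility of integer polynomial coefficients, so the lemma stands independently of that structural theory; in effect it re-proves, in the special case needed, the same closure phenomenon you invoke abstractly. One small point worth making explicit in your write-up: Theorem~\ref{thm-everything} is stated under the hypothesis $D(f)\neq 0$, which does hold here because $f\in\PI(n)$ is irreducible over $\Q$ and hence separable.
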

\begin{proof}
Let $\widetilde{f}(X)\defeq  c^{n-1}f(c^{-1}X)$. Then $\widetilde{f}$ is monic and in $\Zz [X]$. 
Let $i\geq n$. Then
there exist polynomials $\widetilde{q},\widetilde{r}\in\Zz [X]$, such that 
$X^i=\widetilde{q}(X)\widetilde{f}(X)+\widetilde{r}(X)$, where the degree of $\widetilde{r}$ is less than $n$. By substituting $cX$ for $X$
and then dividing by $c^{n-1}$ we get that 
there exist $q,r\in\mathbb{Z}[X]$ such that
$c^{i-n+1}X^i=q(X)\cdot f(X)+r(X)$, where the degree of $r$ is less than $n$.
This implies that $c^{i-n+1}\gamma^i\in I_f(n-1)$ for every integer $i\geq n$.

Let $k\in\{ 0\kdots n-1\}$.
Observe that $c^i$ divides the coefficient of $X^i$ in $(c\cdot p(X))^k$. Therefore if $k<n$ and $i\geq n$, then $c^{i-k}$ divides the coefficient of $X^i$ in $p(X)^k$.
It follows that $p(\gamma )^k$ is a $\Zz$-linear combination of $1,\gamma\kdots \gamma^{n-1}$
and $c^{i-n+1}\gamma^i$ for $i\geq n$.
Hence $p(\gamma)^k\in I_f(n-1)$.
\end{proof}

\begin{prop}\label{gln}
Let $n\geq 3$. Then $f_{t,c}^{(n)}$
and $g_{t,c}^{(n)}$ are Hermite equivalent.
\end{prop}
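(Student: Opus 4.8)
The plan is to prove the stronger assertion that, inside the common field $K\defeq K_{f_{t,c}^{(n)}}=K_{g_{t,c}^{(n)}}$ (these coincide because, by Lemma~\ref{alphabeta}, $\alpha$ and $\beta$ are polynomial expressions in one another over $\Q$, and both fields have degree $n$), the two full invariant ideals coincide as $\Z$-lattices: $I_{f_{t,c}^{(n)}}(n-1)=I_{g_{t,c}^{(n)}}(n-1)$. Granting this, Hermite equivalence drops out: by Theorem~\ref{thm-everything}(vi) each invariant order is the ring of multipliers $\{\xi\in K:\xi I(n-1)\subseteq I(n-1)\}$ of the corresponding ideal, so equal ideals force $R_{f_{t,c}^{(n)}}=R_{g_{t,c}^{(n)}}$. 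The identity map on $K$ then carries $R_{f_{t,c}^{(n)}}$ to $R_{g_{t,c}^{(n)}}$ and $I_{f_{t,c}^{(n)}}(n-1)$ to $1\cdot I_{g_{t,c}^{(n)}}(n-1)$, and Theorem~\ref{thm-equivalence} (equivalently Corollary~\ref{cor-ringeq}, which applies since the irreducibility established above makes both discriminants nonzero) yields that $f_{t,c}^{(n)}$ and $g_{t,c}^{(n)}$ are Hermite equivalent.

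Recalling that $I_{f_{t,c}^{(n)}}(n-1)=\Z\langle 1,\alpha,\dots,\alpha^{n-1}\rangle$ and $I_{g_{t,c}^{(n)}}(n-1)=\Z\langle 1,\beta,\dots,\beta^{n-1}\rangle$, I would prove the two inclusions separately, each by a single application of Lemma~\ref{fqr}. For the inclusion $I_{g_{t,c}^{(n)}}(n-1)\subseteq I_{f_{t,c}^{(n)}}(n-1)$, observe that $\beta=\alpha-c\alpha^2=\alpha\,s_1(c\alpha)$ with $s_1(X)=1-X\in\Z[X]$, so $\beta=p_1(\alpha)$ for $p_1(X)=X\,s_1(cX)$. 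Since $f_{t,c}^{(n)}\in\PI(n)$ has leading coefficient $c$ and root $\alpha$, Lemma~\ref{fqr} gives $\beta^k=p_1(\alpha)^k\in I_{f_{t,c}^{(n)}}(n-1)$ for $k=0,\dots,n-1$, which is exactly the stated inclusion.

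For the reverse inclusion I would use $\alpha=p_{t,c}^{(n)}(\beta)$ from Lemma~\ref{alphabeta}. The one genuinely delicate point—the main obstacle—is that $p_{t,c}^{(n)}(X)=X\,a^{(n)}(cX)-c^{n-2}t\,b^{(n)}(cX)$ is \emph{not} of the form $X\,s(cX)$ demanded by Lemma~\ref{fqr}: it carries the nonzero constant term $p_{t,c}^{(n)}(0)=-c^{n-2}t\,b^{(n)}(0)\in\Z$. I would therefore split this constant off, writing $p_{t,c}^{(n)}(X)=p_{t,c}^{(n)}(0)+X\,s(cX)$, where $s\in\Z[X]$ because the non-constant part of $p_{t,c}^{(n)}$ is divisible by $X$ and, after dividing out $X$, is a polynomial in $cX$; explicitly $\tfrac{1}{X}\big(p_{t,c}^{(n)}(X)-p_{t,c}^{(n)}(0)\big)=a^{(n)}(cX)-c^{n-1}t\,\hat b(cX)=s(cX)$ with $\hat b(Y)=(b^{(n)}(Y)-b^{(n)}(0))/Y\in\Z[Y]$. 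Setting $P_0(X)=X\,s(cX)$ we obtain $P_0(\beta)=\alpha-p_{t,c}^{(n)}(0)$, and Lemma~\ref{fqr} applied to $g_{t,c}^{(n)}$ (leading coefficient $c$, root $\beta$) gives $P_0(\beta)^k=(\alpha-p_{t,c}^{(n)}(0))^k\in I_{g_{t,c}^{(n)}}(n-1)$ for $0\le k\le n-1$.

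Since $p_{t,c}^{(n)}(0)\in\Z$ and $1\in I_{g_{t,c}^{(n)}}(n-1)$, expanding the binomial $\alpha^k=\big(P_0(\beta)+p_{t,c}^{(n)}(0)\big)^k$ exhibits each $\alpha^k$ as a $\Z$-linear combination of the elements $P_0(\beta)^j$ with $0\le j\le k\le n-1$, all of which lie in $I_{g_{t,c}^{(n)}}(n-1)$; hence $\alpha^k\in I_{g_{t,c}^{(n)}}(n-1)$ and $I_{f_{t,c}^{(n)}}(n-1)\subseteq I_{g_{t,c}^{(n)}}(n-1)$. Combining the two inclusions gives the desired equality of lattices, and by the first paragraph the proof is complete. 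Thus the whole argument reduces, once the constant term of $p_{t,c}^{(n)}$ has been absorbed into $\Z\subseteq I_{g_{t,c}^{(n)}}(n-1)$, to two applications of Lemma~\ref{fqr} and the multiplier-ring characterization of the invariant order.
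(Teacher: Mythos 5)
Your proof is correct and follows essentially the same route as the paper's: both reduce to showing $I_{f_{t,c}^{(n)}}(n-1)=I_{g_{t,c}^{(n)}}(n-1)$, obtain one inclusion from Lemma~\ref{fqr} with $p(X)=X(1-cX)$, and obtain the other by subtracting the constant $c^{n-2}t\,b^{(n)}(0)$ from $p_{t,c}^{(n)}$ so that Lemma~\ref{fqr} applies to $g_{t,c}^{(n)}$. The only differences are that you make explicit two steps the paper leaves implicit, namely the binomial expansion showing $\alpha^k\in I_{g_{t,c}^{(n)}}(n-1)$ and the multiplier-ring argument (Theorem~\ref{thm-everything}(vi)) that equal ideals force equal invariant orders before invoking Theorem~\ref{thm-equivalence}.
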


\begin{proof}
For short, we write $f = f_{t,c}^{(n)}$ and $g = g_{t,c}^{(n)}$. By Lemma \ref{alphabeta}, $\beta =\alpha -c\alpha^2$
is a zero of $g$. In view of Theorem~\ref{thm-equivalence}, it suffices to show that
the $\Z$-modules $I_f(n-1) = \Z\langle1, \alpha, \dots, \alpha^{n-1} \rangle$ and $I_g(n-1) = \Z\langle1, \beta, \dots, \beta^{n-1} \rangle$ coincide.

From Lemma \ref{fqr} with $p(X)=X-cX^2=X(1-cX)$, it follows that
$1,\beta\kdots \beta^{n-1}\in I_f(n-1)$, so
$I_g(n-1)\subseteq I_f(n-1)$. For the other direction, we apply Lemma \ref{fqr} with $f$ replaced by $g$ and with $p=p_{t,c}^{(n)}+c^{n-2}tb^{(n)}(0)$. Notice that $p(X)/X$ is of the form $s(cX)$ with $s\in\Zz [X]$.
It follows that $p(\beta )^k\in I_g(n-1)$ for $k=0\kdots n-1$.
By Lemma \ref{alphabeta} we have $p(\beta )=\alpha + c^{n-2}tb^{(n)}(0)$.  Thus, $\alpha^k\in I_g(n-1)$ for $k=0\kdots n-1$, so $I_f(n-1) \subseteq I_g(n-1)$.
\end{proof}

\subsubsection{Verifying (proper non-)monicity}
Clearly, if $c = 1$, then both $f_{t,c}^{(n)}$ and $g_{t,c}^{(n)}$ are monic. We now show that if $c\neq 1$ and $n \geq 4$, then for an appropriate choice of $t$ and $c$,
$f_{t,c}^{(n)}$ and $g_{t,c}^{(n)}$ are properly non-monic.

Take $c$ to be prime with $c\equiv 1\pmod n$, and consider the subgroup of $\Ff_c^*=(\Zz /c\Zz )^*$ given by
$$S_{n,c}=\{ \pm r^n\mid r\in\Ff_c^*\}.$$
Since $c\equiv 1\pmod n$ and $n\geq 4$, the order of $S_{n,c}$ is at most $\frac{2(c-1)}{n}<c-1$ (it contains up to sign all powers of $g^n$, where $g$ is a primitive root modulo $c$),
so it is a proper subgroup of $\Ff_c^*$.

\begin{lem}\label{gl2monic}
Assume that $n\geq 4$, let $c$ be a prime with $c\equiv 1\pmod n$, and let $t$ be a prime
different from $c$ with $t\,({\rm mod}\, c)\not\in S_{n,c}$.
Then $f_{t,c}^{(n)}$ and $g_{t,c}^{(n)}$ are properly non-monic.
\end{lem}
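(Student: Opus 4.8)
The plan is to reduce the claim to a statement about which values the homogenizations of $f_{t,c}^{(n)}$ and $g_{t,c}^{(n)}$ can represent over $\Z$, and then to extract the obstruction by reducing modulo $c$.

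First I would record the following criterion: a primitive polynomial $h \in \Z[X]$ of degree $n$ with leading coefficient $c$ is $\on{GL}_2(\Z)$-equivalent to a monic polynomial if and only if its homogenization $H(X,Y) = Y^n h(X/Y)$ satisfies $H(p,r) = \pm 1$ for some $(p,r) \in \Z^2$. Indeed, if $h$ is translated by $\gamma = \bigl(\begin{smallmatrix} p & q \\ r & s\end{smallmatrix}\bigr) \in \on{GL}_2(\Z)$, then the homogenization of the translate is $\pm H(pX + qY, rX + sY)$, whose coefficient of $X^n$ (that is, the leading coefficient of the dehomogenized translate) equals $\pm H(p,r)$. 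The first column $(p,r)$ of $\gamma$ is a primitive vector, and conversely every primitive vector occurs as such a first column; moreover any representation $H(p,r) = \pm 1$ automatically has $\gcd(p,r) = 1$, since $H(gp', gr') = g^n H(p', r')$. Hence $h$ is properly non-monic precisely when $H$ does not represent $\pm 1$ over $\Z$.

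Next I would reduce the two homogenizations modulo $c$. Writing $F$ and $G$ for the homogenizations of $f_{t,c}^{(n)}$ and $g_{t,c}^{(n)}$, I would observe that in $f_{t,c}^{(n)}(X) = cX^n + t\,k^{(n)}(cX)$ every coefficient other than the constant term carries a factor of $c$: the leading term is $cX^n$, and the coefficient of $X^j$ in $t\,k^{(n)}(cX)$ for $j \geq 1$ is divisible by $c^j$, while the constant term is $t\,k^{(n)}(0) = t$. Thus $F(X,Y) \equiv tY^n \pmod c$. Likewise, in $g_{t,c}^{(n)}(X) = cX^n + t(1 - 2cX a^{(n)}(cX)) + c^{n-1}t^2 b^{(n)}(cX)$ every term except the constant $t$ is divisible by $c$ (using $n - 1 \geq 1$ for the last summand), so $G(X,Y) \equiv tY^n \pmod c$ as well.

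Finally I would derive the contradiction. Suppose $F(p,r) = \pm 1$ for some $p, r \in \Z$. Reducing modulo $c$ gives $t r^n \equiv \pm 1 \pmod c$. If $c \mid r$ the left side vanishes, so $r$ is a unit modulo $c$; then $t \equiv \pm (r^{-1})^n \pmod c$, whence $t \bmod c \in S_{n,c}$, contradicting the hypothesis $t \bmod c \notin S_{n,c}$. Thus $F$ does not represent $\pm 1$, and by the criterion $f_{t,c}^{(n)}$ is properly non-monic; the identical argument applied to $G$ shows $g_{t,c}^{(n)}$ is properly non-monic. The step that demands the most care is the criterion of the first paragraph---tracking how the leading coefficient transforms under $\on{GL}_2(\Z)$ and noting that a value $\pm 1$ forces a primitive vector; once it is in place, the congruence modulo $c$ is immediate and the hypothesis on $t$ is exactly what is needed to exclude every possible representation of $\pm 1$.
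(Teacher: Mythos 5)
Your proof is correct and follows essentially the same route as the paper's: both identify the leading coefficient of a $\GL_2(\Z)$-translate with the homogenization evaluated at the first column of the matrix, reduce modulo $c$ to find that this value is $\equiv t\cdot d^n \pmod{c}$, and conclude from $t \,(\mathrm{mod}\, c) \notin S_{n,c}$ that it can never be $\pm 1$. The only difference is presentational: you package the first step as an explicit if-and-only-if representation criterion (including the converse via primitive vectors), whereas the paper simply runs the one direction it needs over all translates.
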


\begin{proof}
Let $\bigl(\begin{smallmatrix}a&b\\d&e\end{smallmatrix}\bigr) \in \GL_2(\Zz )$ be any element, and consider the polynomial
$\pm (dX+e)^nf_{t,c}^{(n)}\bigl(\medfrac{aX+b}{dX+e}\bigr)$ given by translating $f_{t,c}^{(n)}$ by $\bigl(\begin{smallmatrix}a&b\\d&e\end{smallmatrix}\bigr)$ (up to sign). The leading coefficient of this polynomial is $\pm F(a,d)$, where
$F(X,Y)=Y^nf_{t,c}^{(n)}(X/Y)$ is the homogenization of $f$. We have
\[
F(a,d)\equiv t\cdot d^n\not\equiv\pm 1 \pmod c .
\]
Hence $f_{t,c}^{(n)}$ is properly non-monic. The same argument shows that $g_{t,c}^{(n)}$ is properly non-monic.
\end{proof}

\subsubsection{Verifying $\on{GL}_2(\Z)$-inequivalence}
We now prove that, for every integer $n\geq 4$, there exist infinitely many parameters
$c,t$ satisfying the conditions from Lemma \ref{gl2monic} such that $f_{t,c}^{(n)}$ and $g_{t,c}^{(n)}$ are not $\on{GL}_2(\Z)$-equivalent. We need some preparatory lemmas.

\begin{lem}\label{kn-irreducible}
Let $n\geq 4$ be an integer. Then the polynomial $k^{(n)}$ is irreducible.
\end{lem}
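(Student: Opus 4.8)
The plan is to make $k^{(n)}$ explicit enough to analyze its factorization $p$-adically, and then to prove irreducibility over $\mathbb{Q}$ by excluding every possible degree of a proper factor.

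First I would record structural data. Writing $k^{(n)}(X)=\sum_{m=0}^{n-2}\kappa_m X^m$, the definition of $k^{(n)}$ gives $\kappa_0=k^{(n)}(0)=1$ at once, and comparing top-degree coefficients in $k^{(n)}(X)(1-X)^{n-1}=1-X\,a^{(n)}(X-X^2)$ gives the leading coefficient $\kappa_{n-2}=C_{n-2}$. Feeding the Catalan relation $X\,C(X)^2-C(X)+1=0$ into the partial sums $a^{(n)}$ yields the recursion
\[
(1-X)\,k^{(n)}(X)=k^{(n-1)}(X)-C_{n-2}X^{n-1},
\]
whence $\kappa_1=n-2$ and $\kappa_{n-3}=\kappa_{n-2}=C_{n-2}$, and which identifies the $\kappa_m$ as the ballot numbers $\binom{n-2+m}{m}-\binom{n-2+m}{m-1}$. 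I would also record the factorization identity
\[
b^{(n)}(X-X^2)=k^{(n)}(X)\,k^{(n)}(1-X),
\]
which follows from \eqref{bhk-relation}, the relation $k^{(n)}(X)=-h^{(n)}(1-X)$, and the invariance of $X-X^2$ under $X\mapsto 1-X$. This identity reduces the problem to irreducibility of $b^{(n)}$: if $\theta$ is any root of $k^{(n)}$, then $\mu:=\theta(1-\theta)$ is a root of $b^{(n)}$ lying in $\mathbb{Q}(\theta)$, so irreducibility of $b^{(n)}$ forces $[\mathbb{Q}(\mu):\mathbb{Q}]=n-2$, hence $[\mathbb{Q}(\theta):\mathbb{Q}]=n-2=\deg k^{(n)}$ and the irreducibility of $k^{(n)}$. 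This gives the freedom to work with whichever of $k^{(n)}$, $b^{(n)}$ has the more convenient local behaviour.

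The engine for irreducibility would be the $p$-adic Newton polygon. Since $k^{(n)}$ has unit constant term and leading coefficient $C_{n-2}$, for a prime $p\mid C_{n-2}$ its polygon runs from $(0,0)$ to $(n-2,v_p(C_{n-2}))$; if this is a single segment whose slope has denominator $n-2$ in lowest terms, then $k^{(n)}$ is irreducible over $\mathbb{Q}_p$, hence over $\mathbb{Q}$ (this already disposes of $n=4$ at $p=2$, and of $n=5$ by applying the same idea to $b^{(n)}$ at $p=2$). When no single prime gives a pure polygon, I would combine partial data at two primes $p$ and $q$: a Newton segment of horizontal length $\ell$ and denominator $\ell$ produces an irreducible $p$-adic factor of degree $\ell$, and the requirement that the factorization over $\mathbb{Q}$ simultaneously refine the factorization types over $\mathbb{Q}_p$ and over $\mathbb{Q}_q$ can be incompatible with every proper splitting, forcing irreducibility.

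The hard part is \emph{uniformity in $n$}. No single local condition can work for all $n$: already for $n=6$ the polynomial $k^{(n)}$ is reducible modulo every prime (its Galois group contains no $(n-2)$-cycle) and its Newton polygon is impure at every prime, so reduction modulo one prime cannot prove irreducibility and one genuinely needs the two-prime argument. Making this work for all $n\ge4$ requires uniform control of the valuations $v_p(\kappa_m)$ of the ballot numbers — through Kummer's theorem on carries in binomial coefficients, and through the known formula $v_2(C_m)=s_2(m+1)-1$ for Catalan numbers — so as to guarantee, for every $n$, a pair of primes whose local factorization types cannot both be refinements of a common proper factorization; the finitely many exceptional small $n$ would be checked by direct computation. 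Producing such a pair of primes uniformly, and verifying the attendant valuation inequalities, is where the real work lies.
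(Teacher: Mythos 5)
Your preparatory observations check out: the constant term of $k^{(n)}$ is $1$, the leading coefficient is $C_{n-2}$, the recursion $(1-X)k^{(n)}(X)=k^{(n-1)}(X)-C_{n-2}X^{n-1}$ is the paper's recursion in shifted form, the identity $b^{(n)}(X-X^2)=k^{(n)}(X)\,k^{(n)}(1-X)$ and the resulting reduction ``$b^{(n)}$ irreducible $\Rightarrow$ $k^{(n)}$ irreducible'' are valid, and your disposal of $n=4$ (pure polygon of $k^{(4)}=2X^2+2X+1$ at $2$) and $n=5$ (pure polygon of $b^{(5)}=25X^3+20X^2+14X+14$ at $2$) is correct. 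Your general strategy---Dumas/Newton-polygon constraints at two primes whose factorization types are jointly incompatible---is also exactly the strategy of the paper. But your proposal stops precisely where the proof has to start: you never produce the two primes, never compute the relevant valuations, and explicitly defer ``producing such a pair of primes uniformly, and verifying the attendant valuation inequalities'' as ``where the real work lies.'' That deferred step \emph{is} the lemma. Moreover, the route you sketch for it is not the one that works: you propose to sit at primes dividing $C_{n-2}$ and control valuations of the ballot-number coefficients by Kummer's theorem; those valuations are governed by digit sums, behave erratically in $n$, and give no visible mechanism for uniform purity or for a uniform two-prime incompatibility. The paper's enabling move is different: it passes to the shift $K^{(n)}(X)=k^{(n)}(X+1)$, whose $i$-th coefficient has the closed form $C_{n-1}\binom{n}{i}\frac{(n-1-i)(n-i)}{(n-1+i)(n+i)}$, and takes primes $p,q$ with $n<p<\frac{6n}{5}<q<\frac{36n}{25}$ (Nagura's theorem, valid once $n\ge 24$; the range $4\le n\le 36$ is checked by computer). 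For such a prime $l$ one gets $v_l(a_i)=0$ exactly when $l\in\{n-1+i,\,n+i\}$ and $v_l(a_i)=1$ otherwise, so the Newton polygon has the three edges through $(0,1)$, $(l-n,0)$, $(l-n+1,0)$, $(n-2,1)$; Dumas' theorem then forces any factorization into at most three factors with degrees built from $l-n$, $1$, $2n-l-3$, and since $p+q<3n-4$ the constraints at $p$ and $q$ exclude every splitting \emph{except} one involving a linear factor.

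This is the second gap: the two-prime argument, in the form it can actually be carried out, does \emph{not} ``force irreducibility''---a rational root survives all the local constraints and must be excluded by a separate argument. The paper does this analytically: iterating the recursion gives $(X-1)^2k^{(n+2)}(X)=k^{(n)}(X)+C_nX^n\bigl(X^2-X-\tfrac{C_{n-1}}{C_n}\bigr)$, from which one shows every real root of $k^{(n)}$ lies strictly between $-\tfrac14$ and $-\tfrac15$; since the coefficients of $k^{(n)}$ are positive with constant term $1$, a rational root would have the form $-1/d$, and no such number lies in that interval. Your plan contains no counterpart to this step. Finally, one factual slip: your claim that $k^{(6)}$ is reducible modulo every prime (hence that a single local certificate cannot exist for $n=6$) is false---$k^{(6)}(X)=14X^4+14X^3+9X^2+4X+1$ is irreducible modulo $5$. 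That error is only motivational, but it underlines that the assertions in the part of the argument you left unexecuted cannot be taken on faith.
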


\begin{proof}
We checked by computer the irreducibility of $k^{(n)}$ for $4\leq n\leq 38$, so in the
course of the proof we may assume that $n\geq 39$.
The proof consists of two steps: we first show that $k^{(n)}$ is
either irreducible or has a rational root, and second that $k^{(n)}$ cannot have a rational root.

\medskip
\noindent {\sf Step 1:}  In this step, we use an argument based on Newton polygons. To set up this argument, we first show that the polynomials $k^{(n)}$ satisfy the recursive equation
\begin{equation}\label{req}
(X-1)\cdot k^{(n+1)}(X)+k^{(n)}(X)=C_{n-1}\cdot X^n.
\end{equation}
To prove~\eqref{req}, let $n\geq 2$, and recall that $k^{(n)}(X)=-h^{(n)}(1-X)$. It therefore suffices to prove the recursive equation
\begin{equation} \label{req2}
X\cdot h^{(n+1)}(X)-h^{(n)}(X)=C_{n-1}\cdot (1-X)^n.
\end{equation}
By the definition of $h^{(n)}(X)$ we have the following:
\begin{align*}
X\cdot h^{(n+1)}(X)-h^{(n)}(X) &=
X\cdot \frac{(X-X^2)\cdot a^{(n+1)}(X-X^2)-X}{X^{n+1}}
-\frac{(X-X^2)\cdot a^{(n)}(X-X^2)-X}{X^n}
\\
& =
\frac{(X-X^2)\cdot (a^{(n+1)}(X-X^2)-a^{(n)}(X-X^2))}{X^n}
\\
& =
\frac{(X-X^2)\cdot C_{n-1}\cdot(X-X^2)^{n-1}}{X^n}=C_{n-1}\cdot(1-X)^n,
\end{align*}
so~\eqref{req2} holds. Now,
let
$$
K^{(n)}(X) \defeq C_{n-1}\cdot\sum_{i=0}^{n-2}\binom{n}{i}
\cdot \frac{(n-1-i)(n-i)}{(n-1+i)(n+i)}\cdot X^i.
$$
We claim that
\begin{equation}\label{kK-relation}
K^{(n)}(X)=k^{(n)}(X+1) \quad \text{for}\quad n\geq 2.
\end{equation}
It is straightforward to check that $K^{(2)}(X)=k^{(2)}(X+1)=1$, $K^{(3)}(X)=k^{(3)}(X+1)=X+2$.
By comparing the coefficients of $X^i$ $(i=0,\ldots,n)$ on the left- and right-hand sides,
we deduce the recursive equation
$$X\cdot K^{(n+1)}(X)+K^{(n)}(X)=C_{n-1}\cdot (X+1)^n\ \ \text{for } n\geq 2.$$
Together with \eqref{req}, this implies \eqref{kK-relation}.

Henceforth we assume that $n\geq 39$. We now show that $k^{(n)}$ is either irreducible or has a rational root.
Of course it suffices to prove this for $K^{(n)}(X)$ instead of $k^{(n)}(X)$. 
We apply a theorem of Dumas (see Dumas~\cite[pp.~236--237]{Dumas1906} or Mott~\cite[Proposition 3.2]{Mott1995}) which for convenience of the reader we recall here. 

For a prime number $l$ and an integer $a$, let $v_l(a)$ denote the largest integer $m$ such that $l^m$ divides $a$. Given a polynomial $f(X)=a_0X^s+a_1X^{s-1}+\cdots +a_s\in\Zz [X]$ and a prime $l$, 
the Newton polygon $N_{f,l}$ is the lower convex hull of the points $(i,v_l(a_i))$ ($i=0\kdots s$).
Let $i_0=0<i_1<\cdots <i_u=s$ be the indices such that $(i_j,v_l(a_{i_j}))$ ($j=0\kdots u$) are
the vertices of $N_{f,l}$ and put $n_j\defeq i_j-i_{j-1}$, $m_j\defeq v_l(a_{i_j})-v_l(a_{i_{j-1}})$,
$d_j\defeq \gcd (m_j,n_j)$, $w_j\defeq n_j/d_j$ for $j=1\kdots u$.  

\begin{prop}[Dumas' Irreducibility Theorem]\label{dumas}
The degree of any non-trivial factor in $\Zz [X]$ of $f(X)$ must be a sum of the form
$\sum_{j=1}^u t_jw_j$, where $t_j\in\Zz$ and $0\leq t_j\leq d_j$, for $j=1\kdots t$.
\end{prop}

Let $p$ and $q$ be prime numbers for which $n<p<\frac{6n}{5}$ and $\frac{6n}{5}< q<\frac{36n}{25}$. By the results of Nagura~\cite{nagura}, such primes exist for $n> 24$.
Let $a_i$ denote the coefficient of $X^i$ in $K^{(n)}$.
It is easy to calculate the $l$-adic valuation $v_l(a_i)$ of $a_i$ for $l\in\{ p,q\}$.
The properties of $p$ and $q$ imply that for $l\in\{ p,q\}$ we have
$$v_l(C_{n-1})=1 \quad \text{and} \quad v_l\left( \binom{n}{i}\cdot (n-1-i)\cdot (n-i)\right) =0 \quad \text{for} \quad i=0\kdots n-2.$$
Thus, for $l\in\{ p,q\}$ we have that $v_l(a_i)=0$ if $n-1+i=l$ or $n+i=l$, and $v_l(a_i)=1$ otherwise.
With these observations, 
one easily verifies that the Newton polygon $N_{K^{(n)},l}$
consists of three edges connecting the points
$$(0,1),\qquad (l-n,0),\qquad (l-n+1,0),\qquad (n-2,1).$$
Proposition \ref{dumas}
now implies that for $l=p$ as well as $l=q$, the following holds:
if $K^{(n)}$ factors  over $\Qq$, then it must have at most three irreducible factors of degrees which are sums of the numbers $l-n$, $1$ and $2n-l-3$. For $l=p$, we obtain the following five possibilities:
\begin{itemize}[leftmargin=15pt]
\item $K^{(n)}(X)$ is irreducible;
\item $K^{(n)}(X)=K_1(X)\cdot K_2(X)$, where $\deg(K_1)=p-n$ and $\deg(K_2)=2n-p-2$;
\item $K^{(n)}(X)=K_1(X)\cdot K_2(X)$, where $\deg(K_1)=p-n+1$ and $\deg(K_2)=2n-p-3$;
\item $K^{(n)}(X)=K_1(X)\cdot K_2(X)$, where $\deg(K_1)=n-3$ and $\deg(K_2)=1$; or
\item $K^{(n)}(X)=K_1(X)\cdot K_2(X)\cdot K_3(X)$, where $\deg(K_1)=p-n$, $\deg(K_2)=1$, and $\deg(K_3)=2n-p-3$.
\end{itemize}
We can conclude something similar for $l=q$, obtaining the following five possibilities:
\begin{itemize}[leftmargin=15pt]
\item $K^{(n)}(X)$ is irreducible;
\item $K^{(n)}(X)=K_1^*(X)\cdot K_2^*(X)$, where $\deg(K_1^*)=q-n$ and $\deg(K_2^*)=2n-q-2$;
\item $K^{(n)}(X)=K_1^*(X)\cdot K_2^*(X)$, where $\deg(K_1^*)=q-n+1$ and $\deg(K_2^*)=2n-q-3$;
\item $K^{(n)}(X)=K_1(X)\cdot K_2(X)$, where $\deg(K_1)=n-3$ and $\deg(K_2)=1$; or
\item $K^{(n)}(X)=K_1^*(X)\cdot K_2^*(X)\cdot K_3^*(X)$, where $\deg(K_1^*)=q-n$, $\deg(K_2^*)=1$, and $\deg(K_3^*)=2n-q-3$.
\end{itemize}

\noindent Since $p<q$ and $p+q<\frac{6n}{5}+\frac{36n}{25}<3n-4$ for $n\geq 39$, we have that
\[
p-n\neq 2n-q-2,\ \
p-n\neq 2n-q-3,\ \
p-n+1\neq 2n-q-2,\ \
p-n+1\neq 2n-q-3.
\]
Therefore, the second and the third possibilities above can not be true, which means that $K^{(n)}(X)$, and so $k^{(n)}(X)$, is either irreducible or has a rational root.

\medskip
\noindent {\sf Step 2:} In this step, we prove that $k^{(n)}$ does not have a rational root; consequently, by the result of Step 1, it is irreducible.

We keep our assumption $n\geq 39$. Applying the recursive equation~\eqref{req} twice in succession, we obtain the following identity relating $k^{(n+2)}$ and $k^{(n)}$:
\begin{equation}\label{doubreq}
(X-1)^2\cdot k^{(n+2)}(X)=k^{(n)}(X)+C_{n}\cdot X^{n}\cdot\left( X^2-X-\frac{C_{n-1}}{C_{n}}\right).
\end{equation}
By the definition of the Catalan numbers, we have that $\frac{C_{n-1}}{C_{n}}=\frac{n+1}{4n-2}$, so the roots of $X^2-X-\frac{C_{n-1}}{C_{n}}$ are given by
$$\alpha_1=\frac{1-\sqrt{2+\frac{3}{2n-1}}}{2}\mbox{ and }\alpha_2=\frac{1+\sqrt{2+\frac{3}{2n-1}}}{2}.$$
Furthermore, since $n\geq 39$, we have that
$$-\frac{1}{4}<\alpha_1<-\frac{1}{5}\mbox{, and }\frac{6}{5}<\alpha_2<\frac{5}{4}.$$
Therefore,
\begin{itemize}[leftmargin=15pt]
\item if $x\leq -\frac{1}{4}$ and $n$ is odd, then
\begin{equation}\label{odd4}
C_{n}\cdot x^{n}\cdot\left( x^2-x-\frac{C_{n-1}}{C_{n}}\right)<0,
\end{equation}
\item if $x\leq -\frac{1}{4}$ and $n$ is even, then
\begin{equation}\label{even4}
C_{n}\cdot x^{n}\cdot\left( x^2-x-\frac{C_{n-1}}{C_{n}}\right)>0,
\end{equation}
\item if $-\frac{1}{5}\leq x<0$ and $n$ is odd then
\begin{equation}\label{odd5}
C_{n}\cdot x^{n}\cdot\left( x^2-x-\frac{C_{n-1}}{C_{n}}\right)>0,
\end{equation}
\item if $x>0$ then
\begin{equation}\label{nulla}
k^{(n)}(x)>0,
\end{equation}
\noindent because the coefficients of $k^{(n)}$ are all positive. This fact about the coefficients of $k^{(n)}$ is easily proven by induction: one simply divides the recursive equation~\eqref{req} through by $X-1$, applies the formal power series expansion $\frac{1}{1-X} = 1 + X + X^2 + \cdots$, and verifies that the coefficients of $k^{(n+1)}$ are positive if the same holds for the coefficients of $k^{(n)}$.
\end{itemize}
It is easy to check by computer that the first derivative of $k^{(39)}$ is strictly positive (e.g., its absolute minimum is strictly positive), so $k^{(39)}$ is monotonically increasing. Since $k^{(39)}(-1/4)<0$ and $k^{(39)}(-1/5)>0$, we have that
\begin{itemize}[leftmargin=15pt]
\item if $x\leq -\frac{1}{4}$, then $k^{(39)}(x)<0$; and
\item if $-\frac{1}{5}\leq x$, then $k^{(39)}(x)>0$.
\end{itemize}
Therefore, by \eqref{doubreq}, \eqref{odd4}, \eqref{odd5} and \eqref{nulla} the same is true for every odd integer $n>39$:
\begin{itemize}[leftmargin=15pt]
\item if $x\leq -\frac{1}{4}$ and $n$ is odd, then $k^{(n)}(x)<0$; and
\item if $-\frac{1}{5}\leq x$ and $n$ is odd, then $k^{(n)}(x)>0$.
\end{itemize}
\noindent On the other hand, it is easy to check that $k^{(4)}(x)$ is strictly positive, so by \eqref{doubreq},\eqref{even4}, and \eqref{nulla}:
\begin{itemize}[leftmargin=15pt]
\item if $x\leq -\frac{1}{4}$ and $n$ is even, then $k^{(n)}(x)>0$; and
\item if $0\leq x$ and $n$ is even, then $k^{(n)}(x)>0$.
\end{itemize}
It remains to handle the interval $-\frac{1}{5}\leq x<0$. For this, we can use the fact that if $n$ is even, then $k^{(n+1)}(x)>0$ and so by \eqref{req}, we have that
$$k^{(n)}(x)=C_{n-1}\cdot x^{n}+(1-x)\cdot k^{(n+1)}(x)>0.$$
It follows that if $n\geq 39$, then all real roots of $k^{(n)}$ are strictly between $-\frac{1}{4}$ and $-\frac{1}{5}$. However, the coefficients of $k^{(n)}$ are all positive integers, and its constant term is 1, so all rational roots are of the form $-\frac{1}{d}$, where $d$ divides the leading coefficient. As there are no such numbers strictly between $-\frac{1}{4}$ and $-\frac{1}{5}$, $k^{(n)}$ has no rational roots. Therefore, $k^{(n)}$ is irreducible for any $n\geq 4$.
\end{proof}

\begin{lem}\label{kn-norootmodp}
Let $n\geq 4$ be an integer. Then there exist infinitely many primes $p$
such that $k^{(n)}$ has no root modulo $p$.
\end{lem}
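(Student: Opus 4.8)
The plan is to combine the irreducibility of $k^{(n)}$ established in Lemma~\ref{kn-irreducible} with the Chebotarev density theorem and a classical group-theoretic fact about transitive permutation groups. Since $n\geq 4$, the polynomial $k^{(n)}$ has degree $m=n-2\geq 2$ and is irreducible over $\Q$. Let $L$ denote its splitting field and $G=\on{Gal}(L/\Q)$ its Galois group, realized as a subgroup of the symmetric group on the $m$ roots of $k^{(n)}$ via its action on those roots. Because $k^{(n)}$ is irreducible, $G$ acts \emph{transitively} on these $m\geq 2$ roots.

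First I would invoke Jordan's theorem: every finite transitive permutation group acting on a set of cardinality at least $2$ contains a fixed-point-free element, i.e.\ a derangement. Applying this to $G$ acting on the roots of $k^{(n)}$ yields an element $\sigma\in G$ that fixes none of the roots; equivalently, every cycle of $\sigma$ in its cycle decomposition on the roots has length at least $2$.

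Next I would translate this into a statement about reductions modulo primes. For all but finitely many primes $p$ (namely those not dividing the leading coefficient of $k^{(n)}$ and unramified in $L$), the Frobenius conjugacy class $\on{Frob}_p\subset G$ is well defined, and the degrees of the irreducible factors of $k^{(n)}\bmod p$ over $\Ff_p$ coincide with the cycle lengths of $\on{Frob}_p$ acting on the roots. By the Chebotarev density theorem, the set of primes $p$ for which $\on{Frob}_p$ equals the conjugacy class of the derangement $\sigma$ has positive density, namely the proportion of elements of $G$ conjugate to $\sigma$; in particular this set is infinite. For every such prime $p$, all cycle lengths of $\on{Frob}_p$ are at least $2$, so $k^{(n)}\bmod p$ has no irreducible factor of degree $1$, and therefore no root modulo $p$.

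The two substantive inputs are Jordan's theorem, which guarantees the existence of the derangement $\sigma$ (and is really the conceptual crux, since transitivity alone does not obviously produce a fixed-point-free element), and Chebotarev, which upgrades its existence to a positive density of primes with the desired factorization type. Everything else is bookkeeping; the only mild care needed is to discard the finitely many primes dividing the leading coefficient or the discriminant of $k^{(n)}$, after which infinitely many admissible primes remain.
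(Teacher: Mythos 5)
Your proof is correct and follows essentially the same route as the paper: the paper simply cites the well-known consequence of the Chebotarev (or Frobenius) density theorem that an irreducible polynomial in $\Z[X]$ of degree at least $2$ has no roots modulo infinitely many primes, with references to Lenstra--Stevenhagen and Serre. What you have done is unpack the standard proof of that cited fact (Jordan's theorem producing a derangement, plus Chebotarev applied to its conjugacy class), so your argument is a self-contained version of the paper's one-line proof rather than a different approach.
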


\begin{proof}
A well-known consequence of
Chebotarev's density theorem or Frobenius' density theorem for primes  asserts that if $f \in \Z[X]$ is irreducible, then there are infinitely many primes $p$ modulo which
$f$ has no roots; see for instance Lenstra and Stevenhagen~\cite{LS1996} and Serre~\cite{Serre2003}.
\end{proof}

\begin{prop}\label{nogl2}
Let $n\geq 4$ be an integer, let $p>C_{n-1}$ be a prime
such that $k^{(n+1)}$ has no root modulo $p$, and let $c$ be either $1$ or a prime and $t$ a prime
such that
\begin{equation}\label{tc-properties}
c\equiv 1 \pmod p \quad \text{and} \quad t\equiv -C_{n-1}^{-1}\pmod p .
\end{equation}
Then $f_{t,c}^{(n)}$ and $g_{t,c}^{(n)}$ are not $\GL_2(\Zz )$-equivalent.
\end{prop}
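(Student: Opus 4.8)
The plan is to assume that $f := f_{t,c}^{(n)}$ and $g := g_{t,c}^{(n)}$ are $\GL_2(\Z)$-equivalent and derive a contradiction, the crucial point being that the congruence conditions on $c,t$ force $K_f$ to have no nontrivial automorphisms. First I would reduce $f$ modulo $p$: since $c \equiv 1 \pmod p$ we get $\overline{f}(X) = X^n + t\,k^{(n)}(X)$ in $\Ff_p[X]$, and substituting the recursion \eqref{req} in the form $k^{(n)}(X) = C_{n-1}X^n - (X-1)k^{(n+1)}(X)$ together with $t \equiv -C_{n-1}^{-1} \pmod p$ (here $p > C_{n-1}$ guarantees that $C_{n-1}$ is invertible modulo $p$) collapses the $X^n$-term and yields
\[
\overline{f}(X) = C_{n-1}^{-1}(X-1)\,k^{(n+1)}(X) \in \Ff_p[X].
\]
Because $k^{(n+1)}$ has no root modulo $p$, the value $X=1$ is the unique root of $\overline f$ in $\Ff_p$, and it is simple (as $k^{(n+1)}(1) \neq 0$); moreover $\overline f$ has degree $n$ with leading coefficient $1$.

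Second, I would upgrade this to a statement over $\Q_p$. Since $c$ is a $p$-adic unit, every root of $f$ in $\overline{\Q_p}$ is a $p$-adic integer, so every root of $f$ lying in $\Q_p$ reduces to a root of $\overline f$ in $\Ff_p$, i.e.\ to $1$; by Hensel's lemma applied to the simple root $X=1$, there is exactly one such root $\alpha_0 \in \Z_p$. Thus $f$ has a single root in $\Q_p$, whence there is a unique $\Q$-embedding $\iota\colon K_f \hookrightarrow \Q_p$, necessarily sending $\alpha \mapsto \alpha_0$. Composing any $\sigma \in \mathrm{Aut}(K_f/\Q)$ with $\iota$ produces another such embedding, forcing $\iota\circ\sigma = \iota$ and hence $\sigma = \mathrm{id}$. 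Consequently $\mathrm{Aut}(K_f/\Q)$ is trivial, which means precisely that for every root $\alpha$ of $f$ the only root of $f$ lying in $\Q(\alpha) = K_f$ is $\alpha$ itself. I expect this step—turning the mod-$p$ factorization into the rigidity of $K_f$—to be the main obstacle; the advantage of the $\Q_p$/Hensel formulation is that it uses only that $\overline f$ has a unique simple $\Ff_p$-rational root, so in particular it sidesteps having to prove that $\overline f$ (equivalently $k^{(n+1)}$) is separable modulo $p$.

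Finally I would run the field-theoretic argument from the proof of Theorem~\ref{thm-fmandgm}. If $f$ and $g$ are $\GL_2(\Z)$-equivalent then, since both are primitive and irreducible, there is a root $\alpha$ of $f$ and a root $\beta'$ of $g$ with $\beta' = (a\alpha+b)/(d\alpha+e)$ for some $\left(\begin{smallmatrix} a & b \\ d & e \end{smallmatrix}\right) \in \GL_2(\Z)$; in particular $\beta' \in K_f$. By Lemma~\ref{alphabeta} the roots of $g$ are exactly the $\alpha_j - c\alpha_j^2$ as $\alpha_j$ runs over the roots of $f$, and the polynomial identity $p_{t,c}^{(n)}(X - cX^2) \equiv X \pmod{f(X)}$ established there gives $p_{t,c}^{(n)}(\beta') = \alpha_j \in K_f$, where $\beta' = \alpha_j - c\alpha_j^2$. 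Since $\alpha_j$ is then a root of $f$ in $K_f$, the rigidity just proved forces $\alpha_j = \alpha$, so $\beta' = \alpha - c\alpha^2$. Substituting into $(d\alpha+e)(\alpha - c\alpha^2) = a\alpha + b$ and expanding gives
\[
-cd\,\alpha^3 + (d - ce)\,\alpha^2 + (e - a)\,\alpha - b = 0,
\]
a relation of degree $3$ in $\alpha$; as $\alpha$ has degree $n \geq 4$, all coefficients must vanish. From $c \neq 0$ we obtain $d = 0$, then $e = 0$, and hence $\det\left(\begin{smallmatrix} a & b \\ d & e \end{smallmatrix}\right) = 0$, contradicting membership in $\GL_2(\Z)$. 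This contradiction shows that $f_{t,c}^{(n)}$ and $g_{t,c}^{(n)}$ are not $\GL_2(\Z)$-equivalent.
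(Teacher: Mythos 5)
Your proof is correct, and at the crucial step it takes a genuinely different route from the paper's. Both arguments share the same skeleton: reduce modulo $p$ and combine \eqref{req} with \eqref{tc-properties} to get $f_{t,c}^{(n)}\equiv C_{n-1}^{-1}(X-1)\,k^{(n+1)}(X)\pmod{p}$ (the paper's \eqref{fmn}), and both end with the same cubic relation in $\alpha$ whose coefficients must all vanish, making the matrix singular. The difference is how the ``cross-matched'' case is excluded. The paper distinguishes whether $\alpha_1-c\alpha_1^2$ is the M\"obius image of $\alpha_1$ itself or of another root $\alpha_j$, $j\neq 1$, and kills the second case globally: it picks a prime ideal $\fp_1$ of the splitting field above $p$ with $\alpha_1\equiv 1\pmod{\fp_1}$, gets $k^{(n+1)}(X)\equiv C_{n-1}\,c\prod_{i\geq 2}(X-\alpha_i)\pmod{\fp_1}$, solves for $\alpha_j\equiv e^{-1}b\pmod{\fp_1}$ (using $ae-bd=\pm1$), and takes norms to conclude $k^{(n+1)}(e^{-1}b)\equiv 0\pmod{p}$, contradicting the choice of $p$. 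You instead prove a rigidity statement once and for all: the unique $\Ff_p$-rational root $X=1$ of $\overline{f}$ is simple, so Hensel's lemma gives exactly one root of $f$ in $\Q_p$, hence a unique $\Q$-embedding $K_f\hookrightarrow\Q_p$, hence $\on{Aut}(K_f/\Q)$ is trivial; combining this with the polynomial $p_{t,c}^{(n)}$ of Lemma~\ref{alphabeta} --- which you exploit in a way the paper's proof of this proposition does not, namely to place $\alpha_j=p_{t,c}^{(n)}(\beta')$ inside $\Q(\alpha)$ --- forces the diagonal matching $\beta'=\alpha-c\alpha^2$ and reduces everything to the paper's first (easy) case. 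Your route buys conceptual clarity: it isolates triviality of $\on{Aut}(K_f/\Q)$ as the real obstruction (a fact of independent interest), avoids both the case analysis and the splitting-field/norm computation, and, as you note, needs only that the root $1$ is simple rather than any separability of $k^{(n+1)}$ modulo $p$. The paper's route, by contrast, stays within elementary ideal-theoretic congruences and never invokes $p$-adic completions or Hensel's lemma. The supporting details of your argument all check out: $p\nmid c$ because $c\equiv1\pmod{p}$, so all roots of $f$ are $p$-adic integers; simplicity of the root $1$ follows from $k^{(n+1)}(1)\not\equiv0\pmod{p}$; every root of $g$ has the form $\alpha_j-c\alpha_j^2$ by Galois conjugation in Lemma~\ref{alphabeta}; and the final degree argument needs only $n\geq 4$.
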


\begin{proof}
By Lemma \ref{alphabeta} we may write
\begin{equation}\label{gykt}
f_{t,c}^{(n)}(X)= c\prod_{i=1}^{n}(X-\alpha_i),\quad
g_{t,c}^{(n)}(X)=c\prod_{i=1}^n(X-\beta_i),\
\text{where  $\beta_i=\alpha_i-c\alpha_i^2\ \text{for } i=1\kdots n$}.
\end{equation}
Assume that $f_{t,c}^{(n)}$ and $g_{t,c}^{(n)}$ are $\on{GL}_2(\Z)$-equivalent.
Then there is $\bigl(\begin{smallmatrix}a&b\\ d&e\end{smallmatrix}\bigr)\in\GL_2(\Zz )$ such that
\[
\begin{split}
g_{t,c}^{(n)}(X) &=\pm (dX+e)^nf_{t,c}^{(n)}\bigl(\medfrac{aX+b}{dX+e}\bigr)
\\
&=\pm c\prod_{i=1}^n(a-\alpha_id)\cdot\prod_{i=1}^n\bigl(X+\medfrac{b-\alpha_ie}{a-\alpha_id}\bigr).
\end{split}
\]
This implies that $\pm\prod_{i=1}^n(a-\alpha_id)=1$ and that the sets
$$\{\beta_1,\beta_2,\ldots,\beta_n\} \quad \text{and}\quad \left\lbrace-\frac{b-\alpha_1e}{a-\alpha_1d},-\frac{b-\alpha_2e}{a-\alpha_2d},\ldots,-\frac{b-\alpha_ne}{a-\alpha_nd}\right\rbrace,$$
both of which have order $n$, coincide.

We now split into two cases. The first case is that
$$\beta_1=\alpha_1-c\alpha_1^2=-\frac{b-\alpha_1e}{a-\alpha_1d},$$
and the other case is that
$$\beta_1=\alpha_1-c\alpha_1^2=-\frac{b-\alpha_je}{a-\alpha_jd}$$
for some $j \neq 1$. In the first case, we have that
$$-cd\alpha_1^3+(ac+d)\alpha_1^2+(-a+e)\alpha_1-b=0.$$
But $\alpha_1$ is of degree $n>3$, so the coefficients of $1,\alpha_1,\alpha_1^2,\alpha_1^3$ above are $0$. This implies that $a=b=d=e=0$, which is a contradiction, so this case is not possible.

The second case is more complicated. In this case, we have that
\begin{equation}\label{2case}
(\alpha_1-c\alpha_1^2)\cdot(a-\alpha_jd)=(\alpha_je-b),
\end{equation}
for some $j\not=1$.
By using identity \eqref{req} and the conditions \eqref{tc-properties}, we obtain
\begin{equation}\label{fmn}
f^{(n)}_{t,c}(X)\equiv X^n-\frac{1}{C_{n-1}}\cdot k^{(n)}(X)
\equiv\frac{X-1}{C_{n-1}}\cdot k^{(n+1)}(X)\pmod{p}
\end{equation}
So $1$ is a root of $f^{(n)}_{t,c}(X)$ modulo $p$. But then from \eqref{gykt} it follows that
\begin{equation} \label{prod0modp}
\prod_{i = 1}^n (1-\alpha_i)\equiv 0 \pmod p .
\end{equation}
Let $\fp$ be an arbitrary prime ideal divisor of $p$ in the splitting field $L$ of $f_{t,c}^{(n)}$. It follows from~\eqref{prod0modp} that $1-\alpha_k\equiv 0\pmod{\fp}$ for some $k$ with $1\leq k\leq n$. There exists $\varphi\in \on{Gal}(L/\mathbb{Q})$ for which $\varphi(\alpha_k)=\alpha_1$; let $\fp_1 = \varphi(\fp)$ be the corresponding prime ideal of $L$. Then $1-\alpha_1\pmod 0\pmod{\fp_1}$,
 and combining this with \eqref{fmn} and \eqref{gykt} yields that
\begin{equation}\label{k-property}
k^{(n+1)}(X)\equiv C_{n-1}\cdot c\prod_{i = 2}^n (X-\alpha_i)\pmod{\fp_1}.
\end{equation}
However, $c\equiv 1\pmod{\fp_1}$, so by \eqref{2case} we have that
\begin{equation}\label{aeb}
\alpha_je-b\equiv 0\pmod{\fp_1}.
\end{equation}
Now, $\fp_1\nmid e$, for otherwise $\fp_1\mid b$, which would contradict $ae-bd=\pm1$. Thus $p\nmid e$; let $e^{-1}$ denote the inverse of $e$ in $\mathbb{Z}/p\mathbb{Z}$.
Then~\eqref{aeb} gives $\alpha_j\equiv e^{-1}b\pmod{\fp_1}$, and by substituting this into \eqref{k-property} we find that
\(
k^{(n+1)}(e^{-1}b)\equiv 0\pmod{\fp_1}
\). Taking norms with respect to $L/\mathbb{Q}$, we infer that $k^{(n+1)}(e^{-1}b)\equiv 0\pmod{p}$,
contradicting the choice of $p$.
Therefore, the second case is not possible as well, and we conclude that $f_{t,c}^{(n)}$ and $g_{t,c}^{(n)}$ are not $\GL_2(\Zz )$-equivalent.
\end{proof}

Finally, upon combining Lemma \ref{gl2monic}, Proposition \ref{gln}, and Proposition \ref{nogl2},
we obtain the following result:

\begin{thm}\label{mind}
Let $n\geq 4$
be an integer, and let $p$ be a prime with $p>C_{n-1}=\medfrac{1}{n}\cdot\medbinom{2n-2}{n-1}$ such that $k^{(n+1)}$ has no root modulo $p$.
Further, let $c$ be either $1$ or a prime, and let $t$ be any prime with
\[
c\equiv 1\pmod{np}, \quad t\equiv -C_{n-1}^{-1}\pmod {p}, \quad t\not= c,\quad \text{and} \quad t\,\,({\rm mod}\, c)\not\in S_{n,c}.
\]
Then the polynomials $f_{t,c}^{(n)}$ and $g_{t,c}^{(n)}$ given respectively by \eqref{f-definition}
and \eqref{g-definition} lie in $\PI(n)$ and satisfy the following properties:
\begin{itemize}[leftmargin=25pt]
\item[$(i)$] $f_{t,c}^{(n)}$, $g_{t,c}^{(n)}$ have leading coefficient $c$ and are properly non-monic if $c>1$;
\item[$(ii)$] $f_{t,c}^{(n)},$ $g_{t,c}^{(n)}$ are Hermite equivalent but not $\GL_2(\Zz )$-equivalent.
\end{itemize}
\end{thm}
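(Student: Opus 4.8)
The plan is to assemble this theorem from the three technical results already established, after checking that the arithmetic conditions imposed on $p$, $c$, and $t$ feed correctly into each of them. First I would confirm membership in $\PI(n)$. By construction via~\eqref{f-definition} and~\eqref{g-definition}, both $f_{t,c}^{(n)}$ and $g_{t,c}^{(n)}$ are polynomials of degree $n$ with leading coefficient $c$, and the discussion in the subsection on verifying primitivity and irreducibility shows that both are primitive (the constant term of $f_{t,c}^{(n)}$ being $t$) and irreducible by Eisenstein's criterion applied at the prime $t$. Hence both lie in $\PI(n)$, which is the first assertion of the theorem.

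For part $(i)$, the leading coefficient $c$ is immediate from the construction. If $c = 1$, then both polynomials are monic. If $c > 1$, so that $c$ is prime, I would invoke Lemma~\ref{gl2monic}, whose hypotheses require $n \geq 4$, a prime $c \equiv 1 \pmod n$, and a prime $t \neq c$ with $t \pmod c \notin S_{n,c}$. The condition $c \equiv 1 \pmod{np}$ assumed here forces $c \equiv 1 \pmod n$, while the remaining conditions $t \neq c$ and $t \pmod c \notin S_{n,c}$ are assumed outright; thus Lemma~\ref{gl2monic} yields that $f_{t,c}^{(n)}$ and $g_{t,c}^{(n)}$ are properly non-monic.

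For part $(ii)$, Hermite equivalence follows directly from Proposition~\ref{gln}, which holds for all $n \geq 3$ with no extra constraint on $c$ or $t$. For $\GL_2(\Z)$-inequivalence I would apply Proposition~\ref{nogl2}, whose hypotheses are $n \geq 4$, a prime $p > C_{n-1}$ such that $k^{(n+1)}$ has no root modulo $p$, $c$ equal to $1$ or a prime, and $t$ prime with $c \equiv 1 \pmod p$ and $t \equiv -C_{n-1}^{-1} \pmod p$. Each of these is either assumed directly in the present statement or follows from $c \equiv 1 \pmod{np}$, which supplies $c \equiv 1 \pmod p$. Proposition~\ref{nogl2} then gives that $f_{t,c}^{(n)}$ and $g_{t,c}^{(n)}$ are not $\GL_2(\Z)$-equivalent, completing $(ii)$.

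The genuine difficulty has already been discharged in the preceding results—namely the Newton-polygon argument of Dumas establishing irreducibility of $k^{(n)}$ in Lemma~\ref{kn-irreducible}, and the prime-ideal/Frobenius descent underlying Proposition~\ref{nogl2}. For the assembly itself, the only point requiring care is the bookkeeping of congruences: I must verify that the single hypothesis $c \equiv 1 \pmod{np}$ simultaneously delivers $c \equiv 1 \pmod n$ (needed for Lemma~\ref{gl2monic}) and $c \equiv 1 \pmod p$ (needed for Proposition~\ref{nogl2}), and that primes $c$ and $t$ meeting all the stated congruences in fact exist—guaranteed by Dirichlet's theorem on primes in arithmetic progressions, together with Lemma~\ref{kn-norootmodp} applied to $k^{(n+1)}$ to secure the prime $p$—so that the resulting family of pairs is genuinely infinite and the pairs lie in infinitely many distinct Hermite equivalence classes.
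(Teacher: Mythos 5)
Your proposal is correct and follows exactly the paper's own route: the paper proves Theorem~\ref{mind} precisely by combining Lemma~\ref{gl2monic}, Proposition~\ref{gln}, and Proposition~\ref{nogl2}, with the congruence $\mod{c}{1}{np}$ supplying both $\mod{c}{1}{n}$ and $\mod{c}{1}{p}$ as you observe, and membership in $\PI(n)$ coming from the Eisenstein/primitivity discussion. Your closing remark on Dirichlet's theorem and Lemma~\ref{kn-norootmodp} likewise matches the paper's remark following the theorem.
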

\begin{remark}
Note that, by Dirichlet's theorem on primes in arithmetic progressions, for given $n$ there are infinitely many choices for $c$ as in the statement of Theorem~\ref{mind},
and further that for given $n,c$ there are infinitely many choices for $t$.

If we fix $n,c$ and let $t\to\infty$, then the absolute value of the discriminant of $f_{t,c}^{(n)}$ tends to $\infty$, and thus, the pairs $f_{t,c}^{(n)}$, $g_{t,c}^{(n)}$ 
run through infinitely many different Hermite equivalence classes.
\end{remark}

\noindent
{\bf Acknowledgement.} We are very grateful to the anonymous referee, who very carefully scrutinized our paper and corrected some errors.

\bibliographystyle{abbrv}
\bibliography{bibfile-1}

\end{document}